\documentclass[a4paper,11pt,reqno]{amsart}

\usepackage{graphicx}
\usepackage{a4wide}
\usepackage{eucal}
\usepackage[pdftex,colorlinks]{hyperref}

\setlength{\textwidth}{16cm}
\setlength{\textheight}{20 cm}
\addtolength{\oddsidemargin}{-1.5cm}
\addtolength{\evensidemargin}{-1.5cm}
\numberwithin{equation}{section}

\newcommand{\R}{\mathbb R}
\newcommand{\C}{\mathbb C}
\newcommand{\N}{\mathbb N}
\newcommand{\Z}{\mathbb Z}

\newcommand{\ff}{\frac{1}{3}}

\newcommand{\cN}{\mathcal N}
\newcommand{\cR}{\mathcal R}

\newcommand{\cG}{\mathcal G}
\newcommand{\be}{\begin{equation}}
\newcommand{\ee}{\end{equation}}
\newcommand{\ba}{\begin{eqnarray}}
\newcommand{\ea}{\end{eqnarray}}
\newcommand{\teta}{\tilde \eta}
\newcommand{\tv}{\tilde v}



\newtheorem{theorem}{Theorem}[section]
\newtheorem{proposition}[theorem]{Proposition}
\newtheorem{remark}[theorem]{Remark}
\newtheorem{lemma}[theorem]{Lemma}
\newtheorem{corollary}[theorem]{Corollary}
\newtheorem{definition}[theorem]{Definition}

\begin{document}

\title{Control of a Boussinesq system of KdV-KdV type on a bounded interval}

\noindent

\author[Capistrano--Filho]{Roberto  A. Capistrano--Filho}
\address{Departamento de Matem\'atica, Universidade Federal de
Pernambuco (UFPE), Recife (PE) 50740-545, Brazil}
\email{capistranofilho@dmat.ufpe.br}
\author[Pazoto]{Ademir F. Pazoto}
\address{Instituto de Matem\'{a}tica, Universidade Federal do Rio de Janeiro, C.P.
68530 - Cidade Universit\'{a}ria - Ilha do Fund\~{a}o, 21941-909 Rio de
Janeiro (RJ), Brazil}
\email{ademir@im.ufrj.br}
\author[Rosier]{Lionel Rosier}
\address{Centre Automatique et Syst\`emes (CAS) and Centre de Robotique (CAOR), MINES ParisTech, PSL Research University, 60 Boulevard Saint-Michel, 75272 Paris Cedex 06, France}
\email{Lionel.Rosier@mines-paristech.fr}

\subjclass[2000]{Primary: 35Q53, Secondary: 37K10, 93B05, 93D15}

\keywords{Boussinesq system; KdV-KdV system; exact controllability; stabilization}

\begin{abstract}
We consider a Boussinesq system of KdV-KdV type introduced by J. Bona, M. Chen and J.-C. Saut as a model for the motion of small amplitude long waves on the surface of an ideal fluid. This system of two equations can describe the propagation of waves in both directions, while the single KdV equation is limited to unidirectional waves. We are concerned here with the exact controllability of the 
Boussinesq system by using some boundary controls. 
By reducing the controllability problem to a spectral problem which is solved by using the Paley-Wiener method introduced by the third author for KdV, we determine explicitly all the critical lengths for which the exact controllability fails for the linearized system, and give a complete picture of the controllability results with one or two boundary controls of Dirichlet or Neumann type. The extension of the exact controllability to the full Boussinesq system  is derived in the energy space in the case of a control of Neumann type. It is obtained by incorporating a boundary feedback in the control in order to ensure a global Kato smoothing effect.   
\end{abstract}
\maketitle

\section{Introduction}
J. L. Boussinesq introduced in \cite{boussinesq1,boussinesq2} several simple nonlinear systems of PDEs  (including the Korteweg-de Vries equation) to explain certain physical observations concerning the water waves, e.g. the emergence and stability of solitons. Unfortunately, several systems derived by Boussinesq proved to be ill-posed, so that there was a need to propose other systems similar to Boussinesq's ones but with better mathematical properties.  

The four-parameter family of Boussinesq systems
\begin{equation}
\left\{
\begin{array}
[c]{l}%
\eta_{t}+v_{x}+(   \eta v)  _{x}+av_{xxx}-b\eta_{xxt}=0\text{,}\\
v_{t}+\eta_{x}+vv_{x}+c\eta_{xxx}-dv_{xxt}=0
\end{array}
\right.  \label{int_29e}%
\end{equation}
was introduced by 
J. J. Bona, M. Chen and J.-C. Saut  in \cite{BCS1}  to describe the motion of small amplitude long
waves on the surface of an ideal fluid under the gravity force and in
situations when the motion is sensibly two-dimensional. In (\ref{int_29e}), $\eta$ is the elevation of the fluid surface from the equilibrium position, and
$v=v_{\theta}$ is the horizontal velocity in the flow at height $\theta h$,
where $h$ is the undisturbed depth of the liquid. The parameters $a$, $b$,
$c$, $d$ are required to
fulfill the relations%
\begin{equation}
a+b=\frac{1}{2}(   \theta^{2}-\frac{1}{3})  \text{, \ \ \ }%
c+d=\frac{1}{2}(   1-\theta^{2})  \geq 0,  \footnote{Note that
$a+b+c+d=\frac{1}{3} \cdot$}
 \label{int_30e}%
\end{equation}
where $\theta\in\left[  0,1\right]  $ specifies which horizontal velocity the
variable $v$ represents. 
As it has been proved in \cite{BCS1}, the initial value problem for the {\em linear system} associated with
\eqref{int_29e} is well posed on $\mathbb R$ if and only if the parameters $a,b,c,d$ fall in one of the 
following cases
\begin{eqnarray*}
(\textrm{C}1) && b,d\ge 0,\ a\le 0,\ c\le 0;\\
(\textrm{C}2)&& b,d\ge 0, \ a=c>0. 
\end{eqnarray*}

The wellposedness of the full system  \eqref{int_29e} on the line $(x\in \R$) was investigated in \cite{BCS2}.

Recently,  a rather complete picture of the control properties
of (\ref{int_29e}) on a periodic domain with a locally supported forcing term
was given in \cite{MORZ}. According to the values of the four parameters $a$, $b$, $c$, $d$,
the linearized system may be either controllable in any positive time, or solely in
large time, or may not be controllable at all. These results were also
extended in \cite{MORZ} to the generic nonlinear system (\ref{int_29e}); that is, 
 when all the parameters are different from $0$.

When $b=d=0$ and $(\textrm{C} 2)$ is satisfied, then necessarily $a=c=1/6$. Nevertheless, the scaling
$x\to x/\sqrt{6}$, $t\to t/\sqrt{6}$ gives a system equivalent to \eqref{int_29e} for which $a=c=1$, namely

\begin{equation}
\left\{
\begin{array}
[c]{l}%
\eta_{t}+v_{x}+(   \eta v)  _{x}+v_{xxx}=0\text{,}\\
v_{t}+\eta_{x}+vv_{x}+\eta_{xxx}=0\text{.}%
\end{array}
\right.  \label{new}%
\end{equation}

The above system will be referred to as a {\em Boussinesq system of KdV-KdV type}, or as a {\em KdV-KdV system}. 

The KdV--KdV system is expected to admit global
solutions on $\mathbb{R}$ \cite{BCS2}, and it also possesses good control properties on
the torus \cite{MORZ}.

The boundary stabilization of \eqref{new} on a bounded domain $(0,L)$ was investigated by two of the authors in \cite{PR}.  
They proved that if system \eqref{new} is supplemented with the following
boundary conditions%
\begin{equation}
\left\{
\begin{array}
[c]{lll}%
v(   t,0)  =v_{xx}(   t,0)  =0\text{ } &  & \text{in
}(   0,T)  \text{,}\\
v_{x}(   t,0)  =\alpha_{0}\eta_{x}(   t,0)  &  & \text{in
}(   0,T)  \text{,}\\
v(   t,L)  =\alpha_{2}\eta(   t,L)  &  & \text{in }( 
0,T)  \text{,}\\
v_{x}(   t,L)  =-\alpha_{1}\eta_{x}(   t,L)  &  & \text{in
}(   0,T)  \text{,}\\
v_{xx}(   t,L)  =-\alpha_{2}\eta_{xx}(   t,L)  &  &
\text{in }(   0,T)  \text{,}%
\end{array}
\right.  \label{int_32e}%
\end{equation}
and the initial conditions%
\begin{equation}%
\begin{array}
[c]{lll}%
\eta(   0,x)  =\eta_{0}(   x)  \text{, \ }
v(0,x)  =v_{0}(   x)  &  & \text{in }(   0,L),
\end{array}
\label{int_33e}%
\end{equation}
then the system is locally exponentially stable in the energy space $[L^2(0,L)]^2$ whenever
the constants $\alpha _0$, $\alpha  _1$ and $\alpha _2$ satisfy
\[\alpha_{0}\geq 0,\  \alpha_{1}>0, \textrm{ and } \alpha _2=1.\]

To our best knowledge, the boundary control of the Boussinesq system of KdV-KdV type  on a bounded domain $(0,L)$  is completely open. The aim of this paper 
is to investigate the control properties of the following system%
\begin{equation}
\left\{
\begin{array}
[c]{lll}%
\eta_{t}+v_{x}+ (\eta v)_x + v_{xxx}=0 &  & \text{in }( 
0,T)  \times(   0,L)  \text{,}\\
v_{t}+\eta_{x} + vv_x + \eta_{xxx}=0 &  & \text{in }(   0,T)
\times(   0,L)  \text{,}%
\end{array}
\right.  \label{int_34e}%
\end{equation}
with the boundary conditions%
\begin{equation}
\left\{
\begin{array}{l}
\eta( t,0)  =h_0(t), \ \eta (t,L)=h_1(t), \ \eta _x(t,0)=h_2(t)\   \text { in }(   0,T),\\
v (t,0)  =g_0( t) ,\  v(   t, L) =g_1(t) ,\  v_{x}( t,L)  =g_{2}( t ) \  \text { in }(   0,T)
\end{array}
\right.  \label{int_35e}%
\end{equation}
and the initial conditions%
\begin{equation}%
\begin{array}
[c]{lll}%
\eta(   0,x)  =\eta_{0}(   x)  \text{, \ }
v( 0,x)  =v_{0}(   x)  &  & \text{in }(   0,L)
\text{.}%
\end{array}
\label{int_36e}%
\end{equation}

It is of course desirable to obtain control results (or stabilization results) with a few number of controls inputs. Here, we will provide
a complete picture of the exact controllability of the linearized system with one or two  controls among $h_0,h_1,h_2,g_0,g_1,g_2$.

A similar study was performed for the Korteweg -de Vries (KdV) equation 
\be
\label{AA1}
y_t+y_{xxx}+y_x+yy_x=0,\\ 
\ee
with the boundary conditions
\be
\label{AA2}
y(t,L)=h_0(t),\ y(t,L)=h_1(t), \ y_x(t,L)=h_2(t). 
\ee
More precisely, the exact controllability of \eqref{AA1}-\eqref{AA2} was established in \cite{R1} when $h_0=h_1=0$ ($h_2$ being the only effective control) for a length $L$ which is not critical,  in \cite{cerpa,CeCr1,CoCr} for a length $L$ which is critical, and  in \cite{GG2} when 
$h_0=h_2=0$ for a length $L$ not critical. The null controllability of \eqref{AA1}-\eqref{AA2} was proved in \cite{R2} (see also \cite{GG1}) when 
$h_1=h_2=0$ for any length $L>0$. Note that in that case there is no critical length, and that solely the null controllability holds (say in $L^2(0,L)$), because 
the terminal state $y(\cdot, T)$ is $C^\infty$ smooth for $x>0$.  

A length $L$ is said to be {\em critical} when the linearized equation 
\be
\label{AA3}
y_t+y_{xxx}+y_x=0,\\ 
\ee
fails to be controllable. This phenomenon was first noticed in \cite{R1}. It is due to the influence of the first order derivative $\partial _x$ on the spectrum of the 
operator $\partial _x^3$ with the boundary conditions $y(0)=y(L)=y_x(L)=0$. If the high frequencies are asymptotically preserved,  the low frequencies may be strongly modified, and some of the corresponding eigenfunctions may become uncontrollable for \eqref{AA3} for certain values of $L$ (the critical ones). 

The set of critical lengths for the linear control system 
\ba
y_t+y_{xxx}+y_x=0,&& \label{AA31}\\
y(t,0)=y(t,L)=0,&&y_x(t,L)=h_2(t) \label{AA32}
\ea
was found \cite{R1} to be 
\[
\cN := \{ 2\pi \sqrt{\frac{k^2+kl+l^2}{3}}; \ k,l\in \N ^*\}.  
\]

On the other hand, the set of critical lengths for the linear control system 
\ba
y_t+y_{xxx}+y_x=0,&& \label{AA33}\\
y(t,0)=y_x(t,L)=0,&&y(t,L)=h_1(t) \label{AA34}
\ea
was  proved in \cite{GG2} to be discrete, countable and given by 
\[
\tilde \cN 
: =\{ L>0; \exists a\in\C, b\in \C \textrm{ with } ae^a = be^b = -(a+b)e^{-a-b}, \ L^2=-(a^2+ab+b^2)  \} . 
\]

The determination of the critical lengths for system \eqref{AA31}-\eqref{AA32} in \cite{R1} was based on a series of reductions (as in \cite{BLR}), first to a unique continuation property for the adjoint system, 
next to a spectral problem with an extra condition:
\be
\label{AA100}
-y'''-y'=\lambda y,\  y(0)=y(L)=y'(0)=0 \ \textrm{ and }\  y'(L)=0.  
\ee
This spectral problem was then solved by extending the function $y$ by 0 outside $(0,L)$,  by taking its Fourier transform and 
by using Paley-Wiener theorem. 
The length $L$ is then critical if and only if there exist  some numbers  $p\in \C$ and $(\alpha, \beta )\in \C^2\setminus \{ (0,0)\}$
such that  the function 
\be
\label{AA105}
f(\xi ) = \frac{\alpha -\beta e^{-iL\xi}}{\xi ^3 -\xi +p}
\ee 
is analytic on $\C$; that is, the roots of $\xi ^3 -\xi + p$ are also roots of $\alpha -\beta e^{-iL\xi}$
with at least the same multiplicity. The determination of $\mathcal N$  
follows then with  some algebra.

By contrast, the critical lengths for \eqref{AA33}-\eqref{AA34}, that is the elements of $\tilde {\mathcal N}$, are not explicitly known. 
This is likely due to the lack of symmetries 
in the corresponding spectral problem
\be
\label{AA110}
-y'''-y'=\lambda y,\  y(0)=y(L)=y'(0)=0 \ \textrm{ and }\  y''(L)=0.  
\ee
Note that the boundary conditions in  
\eqref{AA100} are preserved by the transformation $x\to L-x$.  This symmetry yields a more tractable function $f$ in \eqref{AA105}. 

Boussinesq system is more convenient than KdV as a model for the propagation of water waves, for it is adapted to the 
propagation of waves in {\em both}  directions, and it is still valid after bounces of waves at the boundary. 
It is striking that the control theory for Boussinesq system (exposed in this paper) 
 is better understood than for KdV as far as the critical lengths are concerned:  indeed, the critical lengths for Boussinesq system are 
{\em explicitly given} for any set of boundary controls (except in Case 3 where only $g_0$ is used, and in Case 12 where $h_1$ and $g_0$ are used,
see below Table 1), which is not 
the case for KdV with a control as in \eqref{AA34}. We believe that this is due to the numerous symmetries of Boussinesq system: for instance, 
$x=0$ and $x=L$ (resp. $\eta$ and $v$) play a symmetric role for the linearized Boussinesq system.
The price to be paid is the lack of any Kato smoothing effect (the system being conservative), which makes the extension of the control results to the nonlinear Boussinesq system more delicate than for KdV. We refer the reader to \cite{CFPR,cerpa, CeCr1,CeCr2,CCS,coron-book,CoCr,DN,
GG1,GG2,LRZ,LP,pazoto,PMVZ,R1,R2,RZ,RuZ} for the control and stabilization of KdV, and \cite{dSV,DL,dSdSV} for the critical lengths concerning some other dispersive equations.   

To investigate the control properties of the linearized system, we proceed as in \cite{R1}. To prove the observability inequality, we use the reduction to a spectral problem with an extra condition and the Paley-Wiener method. Here, we have to see for which value of $L>0$ two 
functions are entire for a set of parameters.  For instance, in Case 1 where only $g_2$
is used, the two functions read   
\begin{eqnarray*}
f(\xi)    &=& \frac{i}{\xi ^3 -\xi +p}(\alpha +\beta i\xi +\gamma e^{-iL\xi})\\
g( \xi  ) &=& \frac{i}{\xi ^3 -\xi +p} (\alpha ' -\beta i\xi + \gamma' e^{iL\xi}) 
\end{eqnarray*}  
for some $p\in \C$ and $(\alpha,\beta, \gamma,\alpha ', \gamma' )\in \C ^5 \setminus \{ 0 \}$. Using the symmetries of the system, we can prove that the 
set of critical lengths is $\mathcal N$. 

In the reduction to the Unique Continuation Property (UCP), we use in most cases an identity obtained by using Morawetz multiplier, namely  \eqref{A200} (see below). 
However, for Case 3 (see below Table 1), the identity \eqref{A200} proved to be useless. We replace  the observability inequality by a weaker estimate
\be 
\Vert (\eta ^0, v^0)\Vert ^2_{[H^2(0,L)]^2 } \le 
C\left(  \int_0^T |\eta _{xx} (t,L)|^2 dt + \Vert (\eta ^0, v^0)\Vert ^2 _{[L^2(0,L)]^2} \right) , 
\label{TYU}
\ee
which in turn is established by performing a careful investigation of the spectral reduction of the {\em skewadjoint} operator 
$A(\eta, v)=(-v_x -v_{xxx}, -\eta _x  -\eta _{xxx})$ with domain 
\[ 
D(A)= \{ (\eta , v)\in [H^3(0,L)\cap H^1_0(0,L) ] ^2; \ \ \eta _x(0)=v_x(L)=0\}  \subset [L^2(0,L)]^2. 
\]
We show that the space $[L^2(0,L)]^2$ admits an orthonormal basis constituted of eigenfunctions of $A$. To do that, we introduce the 
{\em selfadjoint} operator $(By)(x) = -y_{xxx}(L-x) -y_x(L-x)$ with domain 
\[
D(B) = \{ y\in H^3(0,L) \cap H^1_0 (0,L);\  y_x (L)=0 \} \subset L^2 (0,L). 
\]
Then the spectral reduction of $B$ yields at once those for $A$. 

We give in Table 1 (see below) a complete picture of the exact controllability results for the linearized Boussinesq system.  It is not difficult to extend those exact controllability results to the nonlinear Boussinesq system
in spaces of sufficiently regular functions (e.g. some subspaces of  $[H^2(0,L)]^2$), but here we will not do it for the sake of shortness. 
Rather, we shall explain how to obtain exact controllability/stabilization results in the {\em energy space} $[L^2(0,L)]^2$ 
by  incorporating a feedback law in the control that yields a smoothing effect, as it was done in \cite{LR} for the Benjamin-Ono equation.  We shall limit ourserves to the case of a single Neumann boundary control (Case 1), namely to the system
\be
\label{INTRO1}
\left\{ 
\begin{array}{ll}
\eta _t + v_x + (\eta v)_x + v_{xxx} =0, \quad &  t\in (0,T) , \ x\in (0,L), \\
v_t + \eta _x + vv_x + \eta _{xxx} =0, &  t\in (0,T) , \ x\in (0,L), \\
\eta (t,0)=0, \ \eta (t,L)=0,  \ \eta _x(t,0)=0, &t\in (0,T), \\
v(t,0)=0, \ v(t,L)=0, \ v_x(t,L)=g_2(t), &t\in (0,T) , \\
\eta (0,x)=\eta ^0(x),\ v(0,x)=v^0(x), &x\in (0,L), 
\end{array}\right.
\ee
The first main result in this paper is a local exact controllability result for  \eqref{INTRO1}.
\begin{theorem}
\label{THMINTRO1}
Let $T>0$ and $L\in (0,+\infty )\setminus {\mathcal N}$ where 
$
{\mathcal N} := \{ \frac{2\pi}{\sqrt{3}} \sqrt{k^2+kl+l^2}; \quad k,l\in \N ^*\}.
$
Then there exists some $\delta >0$ such that for all states
$(\eta ^0, v^0), (\eta ^1,v^1)\in [L^2(0,L)]^2$ with
\[
\Vert (\eta ^0,v^0)\Vert _{[L^2(0,L)]^2} \le \delta \quad \textrm{ and }\quad  \Vert (\eta ^1, v^1)\Vert _{[L^2(0,L)]^2} \le \delta ,
\]   
one can find a control $g_2\in L^2(0,T)$ and a solution $(\eta , v)\in C([0,T], [L^2(0,L)]^2)\cap 
L^2(0,T, [H^1(0,L)]^2)$ of \eqref{INTRO1} such that 
\[
\eta (T,x) = \eta ^1(x),\  v(T,x) = v^1(x), \qquad x\in (0,L).
\]
\end{theorem}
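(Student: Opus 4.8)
The plan is to deduce the local exact controllability of the nonlinear system \eqref{INTRO1} from the exact controllability of the associated linearized system (Case 1, whose set of critical lengths is exactly $\cN$, as established earlier via the spectral reduction and the Paley--Wiener analysis of the two entire functions $f,g$). The quadratic terms $(\eta v)_x$ and $vv_x$ will be treated as a source term of small size, and the conclusion will follow from a contraction mapping argument around the zero solution.

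First I would record the well-posedness of the linearized system with an internal source and the Neumann boundary control,
\be
\left\{
\begin{array}{ll}
\eta_t + v_x + v_{xxx} = f_1, & \\
v_t + \eta_x + \eta_{xxx} = f_2, & \\
\eta(t,0)=\eta(t,L)=\eta_x(t,0)=0,\ v(t,0)=v(t,L)=0,\ v_x(t,L)=g_2(t), & \\
\eta(0,\cdot)=\eta^0,\ v(0,\cdot)=v^0. &
\end{array}
\right.
\ee
For data $(\eta^0,v^0)\in[L^2(0,L)]^2$, $g_2\in L^2(0,T)$ and $f=(f_1,f_2)\in L^1(0,T,[L^2(0,L)]^2)$, I would prove there is a unique solution in the space $X^2$, where $X:=C([0,T],L^2(0,L))\cap L^2(0,T,H^1(0,L))$, depending continuously on the data. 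The $L^2$ energy estimate comes from testing against $(\eta,v)$; the gain of one derivative that places the solution in $L^2(0,T,[H^1]^2)$ comes from the Morawetz-type multiplier $x$ (testing against $(x\eta,xv)$) and controlling the resulting boundary traces by the control and the data. This local smoothing on $[0,T]$ is available even though the system is conservative.

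Next I would invoke the linear exact controllability for $L\notin\cN$: using the controllability of the homogeneous linear system together with Duhamel's formula to absorb the source, for any $(\eta^0,v^0),(\eta^1,v^1)\in[L^2(0,L)]^2$ and $f\in L^1(0,T,[L^2]^2)$ there is a control $g_2=\Gamma(\eta^0,v^0,\eta^1,v^1,f)\in L^2(0,T)$, depending linearly and boundedly on its five arguments, driving the solution from $(\eta^0,v^0)$ to $(\eta^1,v^1)$, with
\be
\norm{(\eta,v)}_{X^2} + \norm{g_2}_{L^2(0,T)} \le C\left( \norm{(\eta^0,v^0)}_{[L^2]^2} + \norm{(\eta^1,v^1)}_{[L^2]^2} + \norm{f}_{L^1(0,T,[L^2]^2)}\right).
\ee
I would then define $\Psi(\tilde\eta,\tilde v)=(\eta,v)$, the controlled solution associated with the source $f=(-(\tilde\eta\tilde v)_x,\,-\tilde v\tilde v_x)$ and control $\Gamma(\eta^0,v^0,\eta^1,v^1,f)$, so that any fixed point of $\Psi$ solves \eqref{INTRO1} and meets the terminal condition. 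The crucial bilinear estimate is $(\tilde\eta\tilde v)_x,\ \tilde v\tilde v_x\in L^1(0,T,L^2(0,L))$ with
\be
\norm{(\tilde\eta\tilde v)_x}_{L^1(0,T,L^2)}\le C\,\norm{\tilde\eta}_{X}\,\norm{\tilde v}_{X},
\ee
proved by writing $(\tilde\eta\tilde v)_x=\tilde\eta_x\tilde v+\tilde\eta\tilde v_x$ and using the one-dimensional embedding $H^1(0,L)\hookrightarrow L^\infty(0,L)$ to place one factor in $L^2(0,T,L^\infty)$ and the derivative of the other in $L^2(0,T,L^2)$. Since $f$ is quadratic, $\Psi$ maps the ball $\{\norm{(\eta,v)}_{X^2}\le R\}$ into itself and, by linearity of $\Gamma$ (so that the difference of two controls is $\Gamma(0,0,0,0,f_1-f_2)$), is a contraction provided $R\sim\delta$ is small enough; Banach's theorem then yields the solution and control.

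I expect the main obstacle to be the derivation of the Kato smoothing estimate placing the controlled solution in $L^2(0,T,[H^1]^2)$. As emphasized in the introduction, the Boussinesq system is conservative and lacks the dissipative boundary smoothing exploited for KdV, so the gain of a derivative must be extracted purely from the multiplier $x$, which forces one to dominate the boundary traces (notably $v_x(t,L)=g_2(t)$ and the traces of $\eta_{xx},v_{xx}$) by the control norm and the data. Making this estimate quantitative with a constant uniform over the small ball, so that the bilinear bound and the contraction close simultaneously for small $\delta$, is the delicate point.
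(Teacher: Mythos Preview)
Your overall strategy---linear controllability plus a fixed-point argument with the nonlinearity treated as a source---is standard and correct in spirit, and it is exactly what the paper does in the end. But there is a genuine gap at precisely the point you flag as the main obstacle, and it is not a merely technical one: the Kato smoothing estimate you need \emph{fails} for the linearized system \eqref{A50} with the bare control $g_2$ and $L^2$ data.

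Concretely, the Morawetz identity for the controlled linear system reads (compare \eqref{A200})
\[
\frac{3}{2}\int_0^T\!\!\!\int_0^L (\eta_x^2+v_x^2)\,dxdt
= -\left[\int_0^L x\eta v\,dx\right]_0^T
+ \frac{1}{2}\int_0^T\!\!\!\int_0^L(\eta^2+v^2)\,dxdt
+ \frac{L}{2}\int_0^T\big[\eta_x(t,L)^2 + g_2(t)^2\big]\,dt,
\]
and you must therefore bound $\int_0^T\eta_x(t,L)^2\,dt$ by $\Vert(\eta^0,v^0)\Vert_{X_0}^2+\Vert g_2\Vert_{L^2}^2$. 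But the energy identity for the skew-adjoint problem only gives
\[
\frac{1}{2}\frac{d}{dt}\Vert(\eta,v)\Vert_{X_0}^2 = \eta_x(t,L)\,g_2(t),
\]
which provides no control whatsoever on the trace $\eta_x(\cdot,L)$. (In the paper this trace is shown to lie in $L^2(0,T)$ only for $X_1$ data, Proposition~\ref{prop2}.) So for $(\eta^0,v^0)\in X_0$ you cannot close the smoothing estimate, and the space $X=C([0,T],L^2)\cap L^2(0,T,H^1)$ in which your bilinear estimate lives is simply not reached by the flow.

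The paper resolves this by \emph{changing the linear system around which one perturbs}: the control is written as $g_2(t)=-\alpha\,\eta_x(t,L)+h(t)$ with $\alpha>0$, and one first studies the feedback problem with generator $\tilde A$ (Proposition~\ref{prop1bis}). The feedback makes the energy identity dissipative,
\[
\frac{1}{2}\Vert(\eta,v)(T)\Vert_{X_0}^2 + \alpha\int_0^T\eta_x(t,L)^2\,dt
\le \frac{1}{2}\Vert(\eta^0,v^0)\Vert_{X_0}^2 + \frac{1}{\alpha}\int_0^T h^2\,dt,
\]
so that the missing trace is now controlled and the Morawetz computation yields the genuine Kato smoothing \eqref{M2}. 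One then proves exact controllability of the \emph{dissipative} linear system \eqref{M70} with the open-loop input $h$ (the adjoint is the backward feedback system \eqref{M100}, and the observability \eqref{M140} again reduces to the spectral problem in Theorem~\ref{thm2}), and finally runs the fixed-point argument in $E_T=C([0,T],X_0)\cap L^2(0,T,[H^1]^2)$. The resulting control $g_2=-\alpha\eta_x(\cdot,L)+h$ lies in $L^2(0,T)$ thanks to the trace bound above. In short: you identified the obstacle correctly, but the missing idea is to \emph{build a dissipative feedback into the control} to create the smoothing, rather than trying to extract it from the conservative system.
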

The second main result is a local exponential stability result for \eqref{INTRO1}. 
\begin{theorem}
\label{THMINTRO2}
Let $T>0$, $L\in (0,+\infty )\setminus {\mathcal N}$, and  $\alpha >0$. Then there exist some positive numbers $\delta , \mu,C$ such that for all initial data
$(\eta ^0, v^0) \in [L^2(0,L)]^2$ with
$\Vert (\eta ^0,v^0)\Vert _{[L^2(0,L)]^2}  \le \delta $,  
the system \eqref{INTRO1} with the boundary feedback law
$g_2(t) = -\alpha \eta _x(t,L)$  admits for all $T>0$ a unique solution 
\[
(\eta , v)\in C([0,T], [L^2(0,L)]^2) \cap L^2(0,T,[H^1(0,L)]^2), 
\]
and it holds
\begin{eqnarray}
\Vert (\eta , v) (t)\Vert _{[L^2(0,L)]^2} 
&\le& Ce^{-\mu t} \Vert (\eta _0, v_0 )\Vert _{[L^2(0,L)]^2} , \quad \forall t\ge 0, \\
\Vert (\eta , v) (t)\Vert _{[H^1(0,L)]^2} 
&\le& C \frac{e^{-\mu t}}{\sqrt{t}}  \Vert (\eta _0, v_0 )\Vert _{[L^2(0,L)]^2} , \quad \forall t> 0.
\end{eqnarray}
\end{theorem}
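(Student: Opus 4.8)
The plan is to establish exponential stability first for the linear closed-loop system and then to propagate it to the full nonlinear system by a perturbation argument, the boundary feedback serving throughout to supply the dissipation and the smoothing that the conservative Boussinesq system does not possess. First I would introduce the closed-loop generator $A_\alpha$ as the operator $A(\eta,v)=(-v_x-v_{xxx},-\eta_x-\eta_{xxx})$ with the feedback built into its domain, namely by replacing the condition $v_x(L)=0$ in $D(A)$ by $v_x(L)=-\alpha\eta_x(L)$. Multiplying the two linearized equations by $\eta$ and $v$, integrating by parts over $(0,L)$ and using $\eta(t,0)=\eta(t,L)=v(t,0)=v(t,L)=\eta_x(t,0)=0$, every boundary contribution cancels except $\eta_x(t,L)v_x(t,L)$, which, after substitution of the feedback, yields the dissipation identity
\[
\frac{d}{dt}\norm{(\eta,v)(t)}^2_{[L^2(0,L)]^2} = -2\alpha\,\abs{\eta_x(t,L)}^2 \le 0 .
\]
For this to be meaningful one needs the sharp hidden trace regularity $\eta_x(\cdot,L)\in L^2(0,T)$ for $[L^2(0,L)]^2$ data, which I would obtain from the same multiplier estimates; $A_\alpha$ is then dissipative, and checking that $A_\alpha-\lambda$ is onto for one $\lambda>0$ makes it maximal dissipative, so by Lumer--Phillips it generates a $C_0$-semigroup of contractions $\{S(t)\}_{t\ge 0}$ on $[L^2(0,L)]^2$.

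From the energy identity, exponential decay in $[L^2(0,L)]^2$ for the linear closed-loop system is equivalent to the observability inequality
\[
\norm{(\eta^0,v^0)}^2_{[L^2(0,L)]^2} \le C \int_0^T \abs{\eta_x(t,L)}^2\, dt
\]
for feedback solutions. I would prove it by a compactness--uniqueness argument: negating the inequality produces, after normalization, a nonzero solution with $\eta_x(t,L)\equiv 0$ on $(0,T)$; but then the feedback law forces $v_x(t,L)\equiv 0$ as well, so the limit solves an over-determined eigenvalue problem carrying both $\eta_x(L)=0$ and $v_x(L)=0$ on top of the standing boundary conditions. Expanding this mode and running the Paley--Wiener analysis of Case 1 (which reduces the question to the simultaneous analyticity of the two functions $f,g$ displayed in the introduction), the hypothesis $L\notin\mathcal N$ excludes any nontrivial mode, giving the required unique continuation. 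Feeding the observability back into the dissipation identity gives $E(T)\le(1-2\alpha/C)E(0)$ with $1-2\alpha/C\in(0,1)$, hence $\norm{S(T)}_{\mathcal L([L^2(0,L)]^2)}<1$, and the semigroup property yields $\norm{S(t)}\le C e^{-\mu t}$.

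To reach the energy space in the nonlinear step and to obtain the second estimate, I would establish a global Kato smoothing bound: multiplying the equations by $x\eta$ and $xv$ (a Morawetz-type multiplier) and integrating, the boundary terms are absorbed using the dissipation of the first step, leaving
\[
\int_0^T \norm{(\eta,v)(t)}^2_{[H^1(0,L)]^2}\, dt \le C \norm{(\eta^0,v^0)}^2_{[L^2(0,L)]^2} .
\]
Combining this space-time $[H^1]^2$ gain with the $L^2$ exponential decay and the semigroup property—writing $S(t)=S(t-s)S(s)$ and optimizing in $s\in(0,t)$—produces the pointwise smoothing estimate $\norm{S(t)}_{[L^2(0,L)]^2\to[H^1(0,L)]^2}\le C e^{-\mu t}/\sqrt t$, which is the origin of the $1/\sqrt t$ singularity in the statement.

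Finally I would run a contraction mapping argument for the full system in Duhamel form with $\{S(t)\}$, treating $(\eta v)_x$ and $vv_x$ as source terms. The natural space is $C([0,T],[L^2(0,L)]^2)\cap L^2(0,T,[H^1(0,L)]^2)$ endowed with an exponentially weighted norm $\sup_{t}e^{\mu t}\norm{\cdot}$; for small data the nonlinear map is a contraction, the bilinear estimates closing precisely because of the $L^2(0,T;[H^1]^2)$ smoothing, and the fixed point is a global solution satisfying both decay estimates for $\norm{(\eta^0,v^0)}_{[L^2(0,L)]^2}\le\delta$. I expect the main obstacle to be exactly the interface between the third and fourth steps: since the Boussinesq system is conservative, the boundary feedback is the \emph{only} source of regularization, so one must verify that the induced global Kato smoothing is quantitatively strong enough, uniformly in time, to dominate the quadratic nonlinearity and thereby propagate the linear exponential decay into the nonlinear regime in the rough energy space $[L^2(0,L)]^2$.
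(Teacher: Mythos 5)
Your proposal follows essentially the same route as the paper: the closed-loop operator with the feedback in its domain and Lumer--Phillips, the energy identity reducing exponential decay to the observability inequality $\Vert (\eta^0,v^0)\Vert^2_{[L^2(0,L)]^2}\le C\int_0^T|\eta_x(t,L)|^2dt$, a compactness--uniqueness argument in which the feedback forces $v_x(\cdot,L)\equiv 0$ so that the limit falls under the Case~1 Paley--Wiener result for $L\notin\mathcal N$, the Morawetz-multiplier Kato smoothing, and finally a Duhamel/contraction argument with the $L^2(0,T;[H^1(0,L)]^2)$ gain closing the bilinear estimates (the paper outsources this last step to the earlier Pazoto--Rosier stabilization paper, but the argument is the one you describe, including the $e^{-\mu t}/\sqrt t$ smoothing bound). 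No gaps; your outline matches the paper's proof.
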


The paper is outlined as follows. The wellposedness of the linearized Boussinesq system with the boundary conditions \eqref{int_35e} 
is studied in Section 2. Section 3 is concerned with the exact controllability  of the linearized Boussinesq system with one or two
boundary control inputs. The proof of the main results, namely Theorems  \ref{THMINTRO1} and \ref{THMINTRO2}, is provided
in Section 4. Finally, the proof of the  weak observability estimate  \eqref{TYU} based on some spectral reduction is given in appendix.   

\section {Wellposedness}
\subsection{Wellposedness of the homogeneous problem}
Let $L>0$ be a fixed number. Introduce the spaces 
\ba
X_0&:=& [ L^2(0,L) ]^2 = L^2(0,L)\times L^2(0,L) \label{MMMM1}\\
X_3&:=& \{ (\eta, v)\in [H^3(0,L)\cap H^1_0(0,L)]^2; \ \eta _x(0)=v_x(L)=0 \} , \label{MMMM2}\\
X_{3\theta } &:=& [X_0,X_3]_{ [\theta ]}, \quad \textrm{ for } 0<\theta <1, \label{MMMM3}
\ea
where $[X_0,X_3]_{[\theta ]}$ denotes the Banach space obtained by the  complex  interpolation method 
(see e.g. \cite{BL}). The space  $X_0$ (resp. $X_3$) is endowed with the norm 
\[
\Vert (\eta , v)\Vert _{X_0}  :=\left( \int_0^L [\eta ^2 (x) + v^2 (x) ]dx\right) ^\frac{1}{2}
\]
(resp. with the norm $\Vert (\eta , v) \Vert _{X_3} :=\Vert (\eta , v)\Vert _{X_0} + \Vert (v_x+v_{xxx}, \eta _x + \eta _{xxx})\Vert _{X_0}$). 

It is easily seen that 
\ba
X_1 &=& H^1_0(0,L)\times H^1_0(0,L), \\
X_2 &=&  \{ (\eta, v) \in [ H^2(0,L)\cap H^1_0(0,L) ] ^2; \ \eta _x(0)=v_x(L)=0 \}
\ea
and that in the space $X_1$ (resp. $X_2$), 
the norm $\Vert (\eta, v)\Vert _{X_1}$ (resp. $\Vert (\eta , v)\Vert _{X_2}$)  is equivalent to 
$\displaystyle \left( \int_0^L [\eta _x^2 (x) + v _x^2 (x) ]dx\right) ^\frac{1}{2}$
(resp. $\displaystyle \left( \int_0^L [\eta _{xx}^2 (x) + v_{xx}^2 (x) ]dx\right) ^\frac{1}{2}$). 
We shall use at some place the space 
\begin{eqnarray*}
X_4 &:=& \{ (\eta , v) \in [H^4(0,L)\cap H^1_0(0,L)]^2;\   \eta _x(0)=v_x(L)=\eta _{xxx}(0)=v_{xxx}(L)=0, \\ 
&&\qquad\qquad \qquad \qquad\qquad \qquad \qquad \ \   \eta _{xxx } (L) + \eta _x (L) =v_{xxx} (0)+v_x(0)=0 \} 
\end{eqnarray*}
endowed with its natural norm, 
and for $s\in \{1,2\}$,  the space $X_{-s}=(X_s)'$ which is the dual of $X_s$ with respect to the pivot space $X_0=[L^2(0,L)]^2$. Note that 
\[
X_{-1}=H^{-1}(0,L)\times H^{-1}(0,L). 
\] 
The bracket $\langle \cdot ,\cdot  \rangle _{X_{-s},X_s}$ stands for the duality bracket.  
We first investigate the wellposedness of the initial value problem
\be
\label{A1}
\left\{ 
\begin{array}{ll}
\eta _t + v_x + v_{xxx} =0, \quad &  t\ge 0, \ x\in (0,L), \\
v_t + \eta _x + \eta _{xxx} =0, &  t\ge 0, \ x\in (0,L),\\
\eta (t,0)=\eta (t,L)=\eta _x(t,0)=0, &t\ge 0, \\
v(t,0)= v(t,L)=v_x(t,L)=0, &t\ge 0, \\
\eta (0,x)=\eta ^0(x),\ v(0,x)=v^0(x), &x\in (0,L).  
\end{array}\right.
\ee
We introduce the operator
\[
A(\eta , v) :=(-v_x-v_{xxx}, -\eta _x-\eta _{xxx})
\]
with domain $D(A):=X_3\subset X_0$. 
\begin{proposition}
\label{prop1}
The operator $A$ is skew-adjoint in $X_0$, and thus it generates a group of isometries $(e^{tA})_{t\in \R}$ in $X_0$.  
\end{proposition}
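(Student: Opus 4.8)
The plan is to establish directly that $A^\ast = -A$, after which the claim that $A$ generates a $C_0$-group of isometries follows immediately from Stone's theorem. Since $[C_c^\infty(0,L)]^2 \subset X_3 = D(A)$, the operator $A$ is densely defined and $A^\ast$ is well defined. Writing $S := -\partial_x - \partial_x^3$, so that $A(\eta,v) = (Sv, S\eta)$, and integrating by parts three times, I obtain for $(\eta,v),(\phi,\psi)\in X_3$ an identity $\langle A(\eta,v),(\phi,\psi)\rangle = -\langle (\eta,v), A(\phi,\psi)\rangle + \mathrm{BT}$, where $\mathrm{BT}$ collects the boundary contributions at $x=0$ and $x=L$. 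Using the conditions encoded in $X_3$ — namely $\eta,v,\phi,\psi\in H^1_0(0,L)$ together with $\eta_x(0)=v_x(L)=0$ and $\phi_x(0)=\psi_x(L)=0$ — every surviving term drops out, so $\mathrm{BT}=0$ and $A$ is skew-symmetric. In particular $\langle Au,u\rangle\in i\R$ and $-A\subseteq A^\ast$.

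It remains to prove $D(A^\ast)\subseteq X_3$. Let $(\phi,\psi)\in D(A^\ast)$ and set $(F,G):=A^\ast(\phi,\psi)$. Testing the identity $\langle A(\eta,v),(\phi,\psi)\rangle=\langle(\eta,v),(F,G)\rangle$ against $(\eta,v)\in[C_c^\infty(0,L)]^2$ shows that $F=\psi_x+\psi_{xxx}$ and $G=\phi_x+\phi_{xxx}$ in the distributional sense. Since $F,G\in L^2(0,L)$ and the homogeneous equation $u_{xxx}+u_x=0$ has only smooth solutions (its characteristic roots being $0,\pm i$), a standard variation-of-parameters argument upgrades the regularity to $\phi,\psi\in H^3(0,L)$. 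I then repeat the integration by parts of the first step, but now with $(\phi,\psi)$ no longer assumed to satisfy any boundary condition; comparing with the definition of $A^\ast$ forces $\mathrm{BT}(\eta,v;\phi,\psi)=0$ for every $(\eta,v)\in X_3$.

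Because $\eta,v\in H^1_0$ with $\eta_x(0)=v_x(L)=0$, the surviving boundary terms reduce to a linear combination of the free boundary data $v_{xx}(0),v_{xx}(L),v_x(0),\eta_{xx}(0),\eta_{xx}(L),\eta_x(L)$ with respective coefficients (up to sign) $\phi(0),\phi(L),\phi_x(0),\psi(0),\psi(L),\psi_x(L)$. Since each of these six boundary data can be prescribed arbitrarily by a suitable element of $X_3$ (such functions are readily constructed in $H^3\cap H^1_0$), the vanishing of $\mathrm{BT}$ for all test functions forces $\phi(0)=\phi(L)=\phi_x(0)=0$ and $\psi(0)=\psi(L)=\psi_x(L)=0$. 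These are exactly the conditions defining $X_3$, whence $(\phi,\psi)\in X_3$. Therefore $A^\ast=-A$, that is, $A$ is skew-adjoint, and Stone's theorem yields the group of isometries $(e^{tA})_{t\in\R}$.

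The main obstacle is the inclusion $D(A^\ast)\subseteq D(A)$: one must both bootstrap the a priori regularity of $(\phi,\psi)$ up to $H^3$ and, more delicately, verify that the free boundary data appearing in $\mathrm{BT}$ are genuinely independent and realizable within $X_3$, so that no boundary condition is missed nor spuriously imposed. The symmetric placement of the conditions at $x=0$ and $x=L$, and between the $\eta$ and $v$ components, is precisely what makes this bookkeeping come out exactly right. An alternative route would be to verify skew-symmetry as above and then establish m-dissipativity of both $A$ and $-A$ by solving the resolvent systems $(\lambda I\mp A)(\eta,v)=(f,g)$; the substitution $w=\eta+v$, $z=\eta-v$ decouples the two third-order equations, leaving only $\eta_x(0)=0$ and $v_x(L)=0$ to couple $w$ and $z$, after which Lumer--Phillips applies. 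I favour the adjoint-domain computation, since it reuses the integration by parts already carried out and avoids any explicit ODE solving.
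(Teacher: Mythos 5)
Your proposal is correct and takes essentially the same route as the paper's proof: skew-symmetry via integration by parts using the boundary conditions encoded in $X_3$, then identification of $D(A^*)$ by first testing against $[C_c^\infty(0,L)]^2$ to upgrade the adjoint element to $[H^3(0,L)]^2$, and finally reading off the boundary conditions $\phi(0)=\phi(L)=\phi_x(0)=\psi(0)=\psi(L)=\psi_x(L)=0$ from the boundary form, which is exactly how the paper concludes $(\theta,u)\in D(A)$. The only cosmetic differences are that you deduce the exact vanishing of the boundary terms by comparing with the adjoint identity (the paper instead keeps the bound by $C\Vert (\eta,v)\Vert_{X_0}$ and lets the trace realizability argument remain implicit) and that you invoke Stone's theorem by name.
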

\begin{proof}
We have to prove that $A^*=-A$. It is clear that we have $-A\subset A^*$ (i.e. $(\theta , u)\in D(A^*)$ and  
$A^*(\theta , u) =-A(\theta ,u)$ for all $(\theta ,u)\in D(A)$). Indeed, 
for any $(\eta , v),(\theta , u)\in D(A)$, we have after some integrations by parts
\begin{eqnarray*}
((\theta , u) ,A(\eta ,v))_{X_0} 
&=& -\int_0^L  [\theta (v_x+v_{xxx})+u(\eta _x + \eta _{xxx}) ]dx \\
&=& \int_0^L [v(\theta _x + \theta _{xxx}) +\eta (u_x+u_{xxx}) ]dx \\ 
&=& -(A(\theta, u), (\eta , v))_{X_0}.  
\end{eqnarray*}
Let us prove now that $A^*\subset -A$. Pick any $(\theta , u ) \in D(A^*)$. Then,  we have for some constant 
$C>0$ 
\[
\left\vert ( (\theta , u ) , A(\eta , v))_{X_0}\right\vert \le C \Vert (\eta , v)\Vert _{X_0}\quad \forall (\eta , v) \in D(A), 
\] 
i.e.
\be
\label{A2}
\left\vert \int _0^L  [ \theta (v_x+v_{xxx} ) + u (\eta _x + \eta _{xxx}) ]dx \right\vert 
  \le C \left(  \int_0^L [\eta ^2 + v^2 ]dx\right)^\frac{1}{2},
\qquad \forall (\eta , v)\in D(A). 
\ee
Picking $v=0$ and $\eta \in C_c^\infty (0,L)$, we infer from \eqref{A2} that $u_x+u_{xxx}\in L^2(0,L)$, and hence that 
$u\in H^3(0,L)$. Similarly, we obtain that $\theta \in H^3(0,L)$. 
Integrating by parts in the left hand side  of \eqref{A2}, we obtain that 
\begin{eqnarray*}
&&\left\vert \theta (L) v_{xx}(L) -\theta (0)v_{xx}(0) + \theta _x (0)v_x(0) 
+ u(L) \eta _{xx}(L)  -u(0) \eta _{xx} (0) -u_x(L) \eta _x(L) \right\vert \\
&& \qquad \le C\left( \int_0^L [\eta ^2 + v^2] dx \right) ,\qquad \forall (\eta , v) \in D(A) . 
\end{eqnarray*}
It easily follows that 
\[
\theta (0)=\theta (L)=\theta _x(0)=u(0)=u(L)=u_x(L)=0,
\]
so that $(\theta , u)\in D(A)=D(-A)$. Thus $D(A^*)=D(-A)$ and $A^*=-A$.
\end{proof}

\begin{corollary}
\label{cor1} For any $(\eta ^0,v^0) \in X_0$, system \eqref{A1} admits a unique solution 
$(\eta ,v)\in C(\R , X_0)$, which satisfies $\Vert (\eta (t) , v (t) )\Vert _{X_0} =\Vert (\eta ^0, v^0)\Vert _{X_0}$ for all $t\in \R$. 
If, in addition, $(\eta ^0, v^0)\in X_3$, then  $(\eta, v)\in C(\R , X_3)$ with 
$\Vert (\eta, v)\Vert _{X_3} := \Vert (\eta, v)\Vert _{X_0} + \Vert  A (\eta, v)\Vert _{X_0}$ constant. 
\end{corollary}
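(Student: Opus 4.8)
The plan is to read off both assertions directly from Proposition~\ref{prop1}, since that proposition already provides the group $(e^{tA})_{t\in\R}$ of isometries generated by the skew-adjoint operator $A$; the only work left is to identify this group with the solution operator of \eqref{A1} and to track the conserved norms.

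First I would treat the $X_0$-case. For $(\eta^0,v^0)\in X_0$ I set $(\eta,v)(t):=e^{tA}(\eta^0,v^0)$ and identify it as the unique mild solution of the abstract Cauchy problem $\tfrac{d}{dt}(\eta,v)=A(\eta,v)$, $(\eta,v)(0)=(\eta^0,v^0)$. This is precisely \eqref{A1}: the action $A(\eta,v)=(-v_x-v_{xxx},-\eta_x-\eta_{xxx})$ encodes the two evolution equations, while the choice of domain $D(A)=X_3$ encodes the six boundary conditions in \eqref{A1}. Continuity, $(\eta,v)\in C(\R,X_0)$, is the $C_0$-property of the group, and the conservation law $\Vert(\eta,v)(t)\Vert_{X_0}=\Vert(\eta^0,v^0)\Vert_{X_0}$ follows at once from the fact that each $e^{tA}$ is an isometry (being unitary on the Hilbert space $X_0$).

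For the second assertion I would assume $(\eta^0,v^0)\in X_3=D(A)$. Standard $C_0$-semigroup regularity theory then upgrades the mild solution to a classical one, so $(\eta,v)\in C(\R,D(A))\cap C^1(\R,X_0)$. Since $A$ commutes with $e^{tA}$ on its domain, $A(\eta,v)(t)=e^{tA}A(\eta^0,v^0)$, and the isometry property yields $\Vert A(\eta,v)(t)\Vert_{X_0}=\Vert A(\eta^0,v^0)\Vert_{X_0}$, a constant in $t$. It then suffices to observe that the $X_3$-norm defined earlier coincides with the graph norm $\Vert(\eta,v)\Vert_{X_0}+\Vert A(\eta,v)\Vert_{X_0}$, because $\Vert A(\eta,v)\Vert_{X_0}=\Vert(v_x+v_{xxx},\eta_x+\eta_{xxx})\Vert_{X_0}$ is exactly the second summand in the definition of $\Vert\cdot\Vert_{X_3}$. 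Combining the two conservation identities shows $\Vert(\eta,v)(t)\Vert_{X_3}$ is constant, and continuity of $t\mapsto(\eta,v)(t)$ in the graph norm gives $(\eta,v)\in C(\R,X_3)$.

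I do not expect any genuine obstacle here: all the analytic content, in particular the skew-adjointness and the encoding of the boundary conditions in $D(A)$, is already carried by Proposition~\ref{prop1}. The only points deserving a line of justification are the identification of $e^{tA}(\eta^0,v^0)$ with the solution of the boundary-value problem \eqref{A1} and the equality of $\Vert\cdot\Vert_{X_3}$ with the graph norm of $A$, both of which are routine.
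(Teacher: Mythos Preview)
Your proposal is correct and matches the paper's approach: the corollary is stated without proof in the paper, as it follows directly from Proposition~\ref{prop1} via Stone's theorem and standard $C_0$-group regularity theory, exactly as you outline. Your identification of the $X_3$-norm with the graph norm of $A$ and the use of $A e^{tA}=e^{tA}A$ on $D(A)$ are precisely the routine points needed.
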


Using  Corollary \ref{cor1} combined with some interpolation argument between $X_0$ and $X_3$, 
we infer that for any $s\in (0,3)$, there exists a constant
$C_s>0$ such that for any $(\eta ^0,v^0)\in X_s$, the solution $(\eta , v)$ of \eqref{A1} satisfies 
$(\eta , v) \in C(\R , X_s)$ and 
\be
\label{A5}
\Vert (\eta (t) , v(t)) \Vert _{X_s} \le C_s \Vert (\eta ^0, v^0 ) \Vert _{X_s}, \quad \forall t\in \R.  
\ee 

\subsection{Existence of traces}
For the solutions of system \eqref{A1}, we know that the traces \[ \eta (\cdot ,0), \eta (\cdot , L), \eta _x(\cdot ,0), v(\cdot,0), v(\cdot , L), \textrm{ and } v_x(\cdot ,L)\]
 vanish. 
We have a look at the other traces $\eta _x(\cdot  ,L), v_x(\cdot ,0)$ and prove that they belong to  $L^2_{loc}(\R  _+)$ when 
$ (\eta ^0, v^0)\in X_1$. 
\begin{proposition}
\label{prop2}
Let $(\eta ^0,v^0)\in X_1$ and let $(\eta ,v)$ denote the solution of \eqref{A1}. Pick any $T>0$. Then 
$\eta _x(\cdot ,L), v_x(\cdot ,0) \in L^2(0,T)$ with 
\be
\label{A20}
\int_0^T [ |\eta _x (t,L) |^2 + |v_x(t,0) |^2]dt \le C \Vert (\eta ^0, v^0)\Vert ^2_{X_1}  
\ee 
for some constant $C=C(T)$. 
\end{proposition}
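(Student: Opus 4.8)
The plan is to establish \eqref{A20} by a multiplier argument, carried out first for smooth data and then extended by density. Since the traces $\eta_x(\cdot,L)$, $v_x(\cdot,0)$ and the second-order spatial derivatives appearing in the computation do not make classical sense for data merely in $X_1$, I would first assume $(\eta^0,v^0)\in X_3$ (or in an even denser subspace such as $X_4$), so that by Corollary \ref{cor1} the solution is regular enough in space and time to justify every integration by parts. The bound \eqref{A20}, once obtained with a constant independent of the data, will then be propagated to all of $(\eta^0,v^0)\in X_1$ using density of $X_3$ in $X_1$; indeed \eqref{A20} itself furnishes the continuity of the trace map $(\eta^0,v^0)\mapsto(\eta_x(\cdot,L),v_x(\cdot,0))$ from $X_1$ into $[L^2(0,T)]^2$, which is what makes the passage to the limit legitimate.

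The central idea is a \emph{cross multiplier}. I would multiply the first equation of \eqref{A1} by $\phi(x)\,v$ and the second by $\phi(x)\,\eta$, with $\phi$ a smooth weight to be chosen, and sum. The time-derivative contributions combine into $\frac{d}{dt}\int_0^L\phi\,\eta v\,dx$; the first-order terms $\phi(vv_x+\eta\eta_x)$, after one integration by parts and using $\eta=v=0$ at $x=0,L$, give $-\tfrac12\int_0^L\phi'(\eta^2+v^2)\,dx$; and the dispersive terms $\phi(vv_{xxx}+\eta\eta_{xxx})$, after repeated integration by parts and systematic use of the homogeneous boundary conditions in \eqref{A1}, produce an interior term $\tfrac32\int_0^L\phi'(\eta_x^2+v_x^2)\,dx$, a lower-order term in $\phi'''$, and exactly two surviving boundary contributions, $\tfrac12\phi(0)\,v_x(0)^2$ and $-\tfrac12\phi(L)\,\eta_x(L)^2$. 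All other boundary terms vanish precisely because $\eta(0)=\eta(L)=\eta_x(0)=0$ and $v(0)=v(L)=v_x(L)=0$. The key point is to choose $\phi$ with $\phi(0)<0<\phi(L)$, for instance $\phi(x)=2x-L$ (so $\phi'\equiv2$, $\phi'''\equiv0$), so that both surviving boundary terms move to the left with a favorable sign. Integrating in $t$ over $(0,T)$ then yields an identity of the shape
\[
\frac{L}{2}\int_0^T\!\big(v_x(t,0)^2+\eta_x(t,L)^2\big)\,dt
=\Big[\int_0^L\phi\,\eta v\,dx\Big]_0^T-\int_0^T\!\!\int_0^L(\eta^2+v^2)\,dx\,dt+3\int_0^T\!\!\int_0^L(\eta_x^2+v_x^2)\,dx\,dt .
\]

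It then remains to bound the right-hand side by $\|(\eta^0,v^0)\|_{X_1}^2$. The boundary-in-time term is controlled by the conserved $X_0$ norm (Corollary \ref{cor1}), the negative space--time $L^2$ integral only helps, and the decisive term $3\int_0^T\!\int_0^L(\eta_x^2+v_x^2)\,dx\,dt$ is dominated by $3\int_0^T\|(\eta,v)(t)\|_{X_1}^2\,dt$, which by \eqref{A5} with $s=1$ is at most $C\,T\,\|(\eta^0,v^0)\|_{X_1}^2$; here I use that the $X_1$ norm is equivalent to the $H^1_0$ seminorm of $(\eta,v)$. This gives \eqref{A20} for smooth data with $C=C(T)$ independent of the data, and the density argument concludes.

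The step I expect to be the main obstacle is twofold. Analytically, the delicate point is the bookkeeping of boundary terms in the triple integration by parts for the dispersive part: one must check that the prescribed boundary conditions annihilate every unwanted trace and leave only $v_x(0)$ and $\eta_x(L)$, and — crucially — that a single weight $\phi$ can give \emph{both} surviving traces a usable sign at once, which is exactly where the symmetry of the boundary data between $x=0$ and $x=L$ (and between $\eta$ and $v$) is exploited. Conceptually, the subtlety is that the system is conservative, so there is no Kato smoothing to lean on; the estimate is not closed by any interior gain of regularity but purely by the weighted identity together with the propagation of $X_1$ regularity \eqref{A5}. This is precisely why \eqref{A20} is stated in terms of the $X_1$ norm rather than the $X_0$ norm.
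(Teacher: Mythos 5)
Your proposal is correct and follows essentially the same route as the paper: a Morawetz-type cross multiplier applied to smooth ($X_3$) data, the propagation estimate \eqref{A5} with $s=1$ to control the interior terms, and density of $X_3$ in $X_1$ (via the continuity of the trace map that \eqref{A20} itself provides) to conclude. The only difference is cosmetic: the paper applies the identity twice, with weights $x$ (isolating the $\eta_x(\cdot,L)$ trace, giving \eqref{A200}) and $L-x$ (isolating the $v_x(\cdot,0)$ trace, ``by symmetry''), whereas your single weight $\phi(x)=2x-L$ --- precisely the difference of those two --- captures both traces at once, and your sign bookkeeping and resulting identity are correct.
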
 
\begin{proof}
Assume that $(\eta ^0, v^0) \in X_3$, so that $(\eta , v) \in C([0,T], X_3)\cap C^1([0,T], X_0)$. 
We use Morawetz multipliers as in \cite{R1}. We multiply the first (resp. second) equation in \eqref{A1} by 
$xv$ (resp. $x\eta$), integrate by parts over $(0,T)\times (0,L)$, and add the two obtained equations to obtain
\be
\label{A200}
\frac{3}{2} \int_0^T \!\!\! \int_0^L [\eta _x^2 + v_x^2] \, dxdt  
-\frac{1}{2} \int_0^T \!\!\! \int_0^L [\eta ^2 + v^2] \, dxdt + \left[\int_0^L [x\eta v] dx \right] _0^T 
-\frac{L}{2} \int_0^T \eta _x^2(t,L)  \, dt =0. 
\ee
Since $\int_0^T\Vert (\eta  ,v)\Vert ^2_{X_1} dt \le C\Vert (\eta ^0, v^0)\Vert ^2_{X_1}$, this yields 
\[
\int_0^T \eta _x^2 (t,L)\, dt \le C \Vert (\eta ^0, v^0)\Vert ^2 _{X_1}. 
\] 
By symmetry, using now as multipliers $(L-x)v$ and $(L-x)\eta$, we infer that \
\[
\int_0^T v_x^2 (t,0)\, dt \le C \Vert (\eta ^0, v^0)\Vert ^2 _{X_1}. 
\] 
Thus \eqref{A20} is established when $(\eta ^0, v^0)\in X_3$. Since 
$X_1$  is dense in $X_3$, the result holds as well for $(\eta ^0, v^0)\in X_1$.  
\end{proof}

We now turn our attention to the traces of order 2, namely 
$\eta _{xx} ( \cdot ,0)$, $\eta _{xx}(\cdot ,L)$, $v_{xx}(\cdot ,0)$, and $v_{xx}(\cdot ,L)$. 

\begin{proposition}
\label{prop3}
Let $(\eta ^0,v^0)\in X_2$ and let $(\eta ,v)$ denote the solution of \eqref{A1}. Pick any $T>0$. Then 
$\eta _{xx} (\cdot ,0), \eta _{xx}(\cdot ,L), v_{xx}(\cdot ,0), v_{xx}(\cdot ,L)  
 \in L^2(0,T)$ with 
\be
\label{A41}
\int_0^T [ |\eta _{xx} (t,0) |^2 +|\eta _{xx} (t,L)|^2 
+ |v_{xx}(t,0) |^2  + |v_{xx}(t,L) |^2  ]dt \le C \Vert (\eta ^0, v^0)\Vert ^2_{X_2}  
\ee 
for some constant $C=C(T)$. Furthermore, $\eta _{x} (\cdot , L),  v_{x}(\cdot , 0)   \in H^\frac{1}{3} (0,T)$ with  
 \be
 \label{A42}
\Vert \eta _{x} (\cdot , L) \Vert ^2 _{H^\frac{1}{3} (0,T)} + \Vert v_{x}(\cdot ,0) \Vert ^2_{H^\frac{1}{3} (0,T) }
 \le C \Vert (\eta ^0, v^0)\Vert ^2_{X_2}  
\ee 
for some constant $C=C(T)$.
\end{proposition}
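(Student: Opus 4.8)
The plan is to establish the second-order trace regularity \eqref{A41} by differentiating the equation and reusing the first-order multiplier estimates, and then to obtain the fractional regularity \eqref{A42} by a standard interpolation-in-time argument combined with the hidden smoothing already available from the equation. As in Proposition~\ref{prop2}, I would first assume $(\eta^0,v^0)\in X_4$, so that the solution is smooth enough to justify all integrations by parts, and recover the general case $(\eta^0,v^0)\in X_2$ at the end by density, since $X_4$ is dense in $X_2$. The key observation is that if $(\eta,v)$ solves \eqref{A1}, then the differentiated pair $(\tilde\eta,\tilde v):=A(\eta,v)=(-v_x-v_{xxx},-\eta_x-\eta_{xxx})$ also solves \eqref{A1} (the system is linear with constant coefficients and $A$ commutes with the flow), and by the definition of $X_4$ one checks that $(\tilde\eta,\tilde v)\in X_3$ with $\Vert(\tilde\eta,\tilde v)\Vert_{X_3}\le C\Vert(\eta^0,v^0)\Vert_{X_2}$. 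Applying Proposition~\ref{prop2} to $(\tilde\eta,\tilde v)$ therefore controls $\tilde\eta_x(\cdot,L)$ and $\tilde v_x(\cdot,0)$ in $L^2(0,T)$ by $\Vert(\tilde\eta^0,\tilde v^0)\Vert_{X_1}\le C\Vert(\eta^0,v^0)\Vert_{X_2}$.

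The content of these controlled traces is exactly what is needed. Writing out $\tilde\eta_x=-v_{xx}-v_{xxxx}$ and $\tilde v_x=-\eta_{xx}-\eta_{xxxx}$, and using the equations to trade the fourth-order terms for time derivatives of lower-order traces (e.g. from the first equation $v_{xxx}=-\eta_t-v_x$, so $v_{xxxx}=-\eta_{tx}-v_{xx}$), one sees that the $L^2$ bounds on $\tilde\eta_x(\cdot,L)$, $\tilde v_x(\cdot,0)$ translate, after accounting for the boundary conditions in $X_4$, into $L^2(0,T)$ bounds on $v_{xx}(\cdot,L)$ and $\eta_{xx}(\cdot,0)$. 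The remaining two traces $\eta_{xx}(\cdot,L)$ and $v_{xx}(\cdot,0)$ are then obtained by the left-right symmetry $x\mapsto L-x$ of the boundary conditions, exactly as in the proof of Proposition~\ref{prop2}, where the multipliers $(L-x)v$ and $(L-x)\eta$ played the symmetric role to $xv$ and $x\eta$. This yields \eqref{A41}.

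For the fractional bound \eqref{A42}, the idea is to interpolate in time. On the one hand, Proposition~\ref{prop2} gives $\eta_x(\cdot,L)\in L^2(0,T)$ with norm controlled by $\Vert(\eta^0,v^0)\Vert_{X_1}$; on the other hand, the time derivative $\partial_t\eta_x(\cdot,L)$ is, via the equation $\eta_t=-v_x-v_{xxx}$, nothing but $-(v_{xx}+v_{xxxx})(\cdot,L)$, whose $L^2(0,T)$-type control one seeks at the level of $X_4$ data. More precisely, I would set up the estimate so that $\eta_x(\cdot,L)$ is bounded in $L^2(0,T)$ in terms of $X_1$-data and in $H^1(0,T)$ in terms of $X_4$-data, and then use the complex interpolation identities $[X_1,X_4]_{[\theta]}=X_{1+3\theta}$ together with $[L^2(0,T),H^1(0,T)]_{[\theta]}=H^\theta(0,T)$. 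Choosing $\theta=\tfrac13$ gives $X_2$ on the data side and $H^{1/3}(0,T)$ on the trace side, which is precisely \eqref{A42}; the same argument applied under the symmetry $x\mapsto L-x$ handles $v_x(\cdot,0)$.

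The main obstacle I anticipate is the careful bookkeeping of boundary terms when integrating by parts at the $X_4$ level: one must verify that the additional boundary conditions built into $X_4$ (namely $\eta_{xxx}(0)=v_{xxx}(L)=0$ and the compatibility relations $\eta_{xxx}(L)+\eta_x(L)=v_{xxx}(0)+v_x(0)=0$) are exactly the ones that make $A(\eta,v)\in X_3$, and that no uncontrolled boundary trace survives when converting the $\tilde\eta_x,\tilde v_x$ estimates into estimates for the genuine second-order traces. The compatibility conditions are dictated precisely by requiring that the differentiated variable inherits the vanishing boundary data of $X_3$, so this is a matter of matching definitions rather than a new difficulty, but it is where the proof must be written with care. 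The interpolation step is then routine once the two endpoint estimates are in hand.
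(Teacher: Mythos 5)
Your treatment of \eqref{A42} is correct and is exactly the paper's argument: the two endpoints are $\eta_x(\cdot,L),\,v_x(\cdot,0)\in L^2(0,T)$ for $X_1$ data (Proposition \ref{prop2}) and $\in H^1(0,T)$ for $X_4$ data (apply Proposition \ref{prop2} to $(\eta_t,v_t)=A(\eta,v)$, which solves \eqref{A1} with initial datum $A(\eta^0,v^0)\in X_1$), followed by interpolation using $X_2=[X_1,X_4]_{\frac13}$. However, your proof of \eqref{A41} has a genuine gap, for two reasons. First, the algebra defeats the plan: since $\tilde\eta=\eta_t$ and $\tilde v=v_t$, the traces you control by applying Proposition \ref{prop2} to $(\tilde\eta,\tilde v)$ are $\tilde\eta_x(\cdot,L)=\partial_t\big[\eta_x(\cdot,L)\big]$ and $\tilde v_x(\cdot,0)=\partial_t\big[v_x(\cdot,0)\big]$. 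Indeed, substituting $v_{xxxx}=-\eta_{tx}-v_{xx}$ into $\tilde\eta_x=-v_{xx}-v_{xxxx}$ makes the $v_{xx}$ terms cancel \emph{identically}, leaving $\eta_{tx}$; no boundary condition built into $X_4$ can prevent this cancellation, because it is an algebraic identity valid in the interior. So time-differentiation only yields time regularity of the \emph{first-order} traces (precisely what \eqref{A42} needs) and carries no information at all about $\eta_{xx}$ and $v_{xx}$ at the boundary. Second, the norms are miscounted: $A$ is a third-order operator, so for $(\eta^0,v^0)\in X_4$ one has $A(\eta^0,v^0)\in X_1$ with $\Vert A(\eta^0,v^0)\Vert_{X_1}$ comparable to $\Vert (\eta^0,v^0)\Vert_{X_4}$, not bounded by $\Vert(\eta^0,v^0)\Vert_{X_2}$; the claim that $A(\eta,v)\in X_3$ with an $X_2$ bound is dimensionally impossible. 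Even if second-order traces did come out of this scheme, they would be estimated by the $X_4$ norm, and a density argument cannot lower the right-hand side of an inequality from the $X_4$ norm to the $X_2$ norm.

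The missing ingredient for \eqref{A41} is a \emph{localized second-derivative multiplier}, which is what the paper uses: pick $\rho\in C^\infty([0,L])$ with $\rho=0$ near $x=0$ and $\rho=1$ near $x=L$, and set $\tilde\eta=\rho\eta$, $\tilde v=\rho v$. These satisfy the system \eqref{A1} with commutator source terms $\tilde f,\tilde g$ involving at most two derivatives of $(\eta,v)$, hence lying in $C([0,T],L^2(0,L))$ with norm controlled by $\Vert(\eta^0,v^0)\Vert_{X_2}$, and all their traces at $x=0$ vanish. Multiplying the two equations by $\tilde v_{xx}$ and $\tilde\eta_{xx}$ respectively and integrating by parts, the only surviving boundary contribution is $\frac12\int_0^T\big[\tilde\eta_{xx}^2(t,L)+\tilde v_{xx}^2(t,L)+\tilde\eta_x^2(t,L)\big]\,dt$, which is therefore bounded by $C\Vert(\eta^0,v^0)\Vert^2_{X_2}$; the traces at $x=0$ follow from the symmetric cutoff (equivalently the symmetry $x\mapsto L-x$), and density of $X_3$ in $X_2$ concludes. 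Your instinct that \eqref{A41} requires an estimate beyond Proposition \ref{prop2} is right, but time-differentiation is the wrong tool for the second-order spatial traces; one needs this direct energy argument with $\partial_x^2$ multipliers on a localized solution.
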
 
\begin{proof}
Assume that $(\eta ^0, v^0) \in X_3$, so that $(\eta , v) \in C([0,T], X_3)\cap C^1([0,T], X_0)$. 
Pick $\rho \in C^\infty ( [0,L] )$ with $\rho (x)=0$ for $x\le 1/4$ and $\rho (x)=1$ for $x\ge 3/4$, and set 
$\tilde \eta = \rho \eta$, $\tilde v=\rho v$. Then we have
\ba
\teta _t +\tv _x + \tv _{xxx} &=& \rho _x v+ 3\rho _x v_{xx} + 3\rho _{xx} v_x + \rho _{xxx} v =:\tilde f, \label{A31}\\
 \tv _t +\teta _x + \teta _{xxx} &=& \rho _x \eta + 3\rho _x \eta _{xx} + 3\rho _{xx} \eta _x + \rho _{xxx} \eta
 =: \tilde g .\label{A32}
\ea  
Note that $\tilde f, \tilde g\in C([0,T], L^2(0,L) )$. 
Multiplying each term in \eqref{A31} by $\tv _{xx}$ (resp.  in \eqref{A32}  by $\teta _{xx}$) and integrating 
by parts, we arrive to 
\[
\frac{1}{2} \int_0^T \big[\teta _{xx}^2 (t,L)  + \tv _{xx}^2 (t,L) + \teta _{x}^2(t, L) ] \, dt  =
\left[ \int_0^L \teta _{x} \tv _{x} \, dx  \right]_0^T 
+ \int_0^T\!\!\!\int_0^L [ \tilde f \tv_{xx} + \tilde g \teta _{xx} ]\, dxdt. 
\]
It follows that 
\[
\int_0^T \big[\eta _{xx}^2 (t,L)  + v_{xx}^2 (t,L) ] \, dt  \le C \left( \Vert (\eta ^0, v^0)\Vert ^2_{X_2} 
+ \int_0^T \Vert (f , g )\Vert _{X_0}^2 dt \right ) \le  C  \Vert (\eta ^0, v^0)\Vert ^2_{X_2}, 
\]
and we can prove in a similar way that
 \[
\int_0^T \big[ \eta _{xx}^2 (t,0)  + v_{xx}^2 (t,0) ] \, dt 
\le C  \Vert (\eta ^0, v^0)\Vert ^2_{X_2}. 
\]
Thus \eqref{A41} is established when $(\eta ^0, v^0)\in X_3$. Since 
$X_3$  is dense in $X_2$, the result holds as well for $(\eta ^0, v^0)\in X_2$.
Let us proceed with the proof of \eqref{A42}.   
If $(\eta ^0, v^0)\in X_4$, then $(\eta, v)\in C(\R, X_4)$, so that $(\hat \eta , \hat v) :=(\eta _t, v_t)=A(\eta , v)$ is in 
$C(\R , X_1)$ and it solves 
\[
(\hat \eta , \hat v) _t = A(\hat \eta , \hat v),\quad (\hat \eta , \hat v)(0) = A(\eta ^0, v^0).  
\]  
It follows from \eqref{A20} that 
\be
\label{A20a}
\Vert \eta _x (\cdot ,  L)\Vert ^2 _{H^1(0,T)} + \Vert v_x (\cdot , 0)\Vert ^2 _{H^1(0,T)}
\le C \Vert (\eta ^0, v^0)\Vert ^2 _{X_4}.
\ee
Since $X_2=[X_1,X_4]_\frac{1}{3}$, we infer from \eqref{A20} and \eqref{A20a} that 
\be
\label{A20b}
\Vert \eta _x (\cdot ,  L)\Vert ^2 _{H^\frac{1}{3} (0,T)} + \Vert v_x (\cdot , 0)\Vert ^2 _{H^\frac{1}{3} (0,T)}
\le C \Vert (\eta ^0, v^0)\Vert ^2 _{X_2}
\ee
for some constant $C=C(T)$ and all  $(\eta ^0, v^0)\in X_2$.
\end{proof}

\subsection{Wellposedness of the nonhomogeneous problem}
We consider the nonhomogeneous system 
\be
\label{A50}
\left\{ 
\begin{array}{ll}
\eta _t + v_x + v_{xxx} =0, \quad &  t\in (0,T) , \ x\in (0,L), \\
v_t + \eta _x + \eta _{xxx} =0, &  t\in (0,T) , \ x\in (0,L),\\
\eta (t,0)= h_0(t), \ \eta (t,L)= h_1(t) ,  \ \eta _x(t,0)= h_2(t) , &t\in (0,T), \\
v(t,0)= g_0(t), \ v(t,L)= g_1(t), \ v_x(t,L)=g_2(t), &t\in (0,T) , \\
\eta (0,x)=\eta ^0(x),\ v(0,x)=v^0(x), &x\in (0,L), 
\end{array}\right.
\ee
where the initial data $(\eta ^0, v^0)$ and the boundary data $(h_0,h_1,h_2, g_0, g_1, g_2)$ are given in appropriate spaces. 

We also consider the homogenous system 
\be
\label{A51}
\left\{ 
\begin{array}{ll}
\theta _t +u_x + u_{xxx} =0, \quad &  t\in(0,T), \ x\in (0,L), \\
u_t + \theta _x + \theta _{xxx} =0, &  t\in (0,T), \ x\in (0,L),\\
\theta (t,0)=\theta (t,L)=\theta _x(t,0)=0, &t\in (0,T), \\
u(t,0)= u(t,L)=u_x(t,L)=0, &t\in (0,T), \\
\theta (0,x)=\theta ^0(x),\ u(0,x)=u^0(x), &x\in (0,L).  
\end{array}\right.
\ee

If both $(\eta, v)$ and $(\theta , u)$ are in $C([0,T],[H^3(0,L)]^2)\cap C^1([0,T], [L^2(0,L)]^2)$, which is the case when 
$(\eta ^0, v^0),(\theta ^0, u^0) \in X_3$ and $h_i, g_i\in C^2([0,T])$ with $h_i(0)=g_i(0)=0$  for $i=1,2,3$, then we obtain 
after multiplying the two first  equations of \eqref{A50} by $u$ and $\theta$ respectively, and integrating by parts, that for 
all $S\in [0,T]$
\ba
\left[\int_0^L [\eta\theta +vu]dx \right]_0^S 
&=& \int_0^S [ \theta _x (t,L) g_2(t) -\theta _{xx} (t,L) g_1(t) +\theta_{xx} (t,0) g_0(t) \nonumber \\
 &&\qquad    - u_x(t,0)h_2(t)   -u_{xx} (t,L) h_1(t)  +  u_{xx}(t,0) h_0(t)] dt. \label{A60}
\ea    
\begin{definition}
\label{def1}
Let $(\eta ^0, v^0)\in X_{-2}$, $g_0,g_1,h_0,h_1\in L^2(0,T)$ and $g_2,h_2\in H^{-\frac{1}{3}}(0,T)$. We 
say that $(\eta, v)\in C([0,T], X_{-2})$ is a  solution of the nonhomogeneous problem \eqref{A50} if we have that 
\be
\label{A62}
\langle (\eta (S), v(S)), (\theta (S), u(S))\rangle _{X_{-2},X_2} = L_S(\theta ^0, u^0), \qquad \forall (\theta ^0,u^0)\in X_2, \ \forall S\in [0,T],
\ee
where 
\ba
 L_S(\theta ^0, u^0) &:=&
\langle (\eta ^0, v^0), (\theta ^0, u^0)\rangle _{X_{-2},X_2}
+\langle g_2,  {\mathbf 1} _{(0,S)} \theta _x(\cdot  ,L) \rangle _{H^{-\frac{1}{3}} (0,T), H^\frac{1}{3}(0,T)}
\nonumber \\ 
&&+ \int_0^S [ -\theta _{xx} (t,L) g_1(t) +\theta_{xx} (t,0) g_0(t)  -u_{xx} (t,L) h_1(t)  +  u_{xx}(t,0) h_0(t)] dt \nonumber \\
&& - \langle h_2,  {\mathbf 1} _{(0,S)} u_x(\cdot , 0) \rangle _{H^{-\frac{1}{3}} (0,T), H^\frac{1}{3}(0,T)},
\label{A63}
\ea
$(\theta ,u)$ denoting the solution of system \eqref{A51}. 
\end{definition}
Note that $L_S:X_2\to \R$ is well defined for all $S\in [0,T]$, for 
$\theta _{xx} (\cdot , 0)  , \theta_{xx} (\cdot , L) , u_{xx} (\cdot , 0)$,  $u_{xx}(\cdot , L) \in L^2(0,T)$ and 
$\theta _x(\cdot , L), u_x (\cdot , 0)\in H^\frac{1}{3}(0,T)$ by Proposition \ref{prop3}. The fact that 
${\mathbf 1}_{(0,S)} \theta _x(\cdot , L)$, ${\mathbf 1}_{(0,S)}  u_x (\cdot , 0)\in H^\frac{1}{3}(0,T)$ for any $S\in [0,T]$ follows from 
\cite[Th\'eor\`eme 11.4 p. 66]{LM}. 

The existence and uniqueness of a solution of system \eqref{A50} is stated in the following result. 

\begin{proposition}
\label{prop4}
Let $(\eta ^0, v^0)\in X_{-2}$, $g_0,g_1,h_0,h_1\in L^2(0,T)$, and $g_2,h_2\in H^{-\frac{1}{3}}(0,T)$. Then there exists a unique 
solution  $(\eta, v)\in C([0,T], X_{-2})$ of the nonhomogeneous problem \eqref{A50}. Furthermore, we have 
\ba
\Vert (\eta , v)\Vert _{L^\infty (0,T,X_{-2})} &\le& C \bigg(
\Vert (\eta ^0, v^0)\Vert _{X_{-2}} +  \Vert g_0\Vert _{L^2(0,T)} +\Vert h_0\Vert _{L^2(0,T)} \nonumber\\
&& +\Vert g_1\Vert _{L^2(0,T)} +\Vert h_1\Vert _{L^2(0,T)}  
+  \Vert g_2\Vert _{H^{-\frac{1}{3}} (0,T)} +\Vert h_2\Vert _{H^{-\frac{1}{3}} (0,T)} 
\bigg)  \quad \label{A71}
\ea 
for some constant $C=C(T) >0$.
If, in addition, $(\eta ^0, v^0)\in X_{-1}$,  
$g_0=g_1=h_0=h_1=0$,
and $g_2,h_2\in L^2(0,T)$, then 
$(\eta, v)\in C([0,T], X_{-1})$ and we have for some
constant $C'=C'(T)$
\ba
\Vert (\eta , v)\Vert _{L^\infty (0,T,X_{-1} ) } 
&\le& C' \bigg(
\Vert (\eta ^0, v^0)\Vert _{X_{-1}} +  \Vert g_2\Vert _{L^2 (0,T)} +\Vert h_2\Vert _{L^2 (0,T)} 
\bigg) . \label{A72}
\ea 
\end{proposition}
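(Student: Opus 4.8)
The plan is to build the solution by transposition, exactly as in the reduction of \cite{R1}, and then to obtain the time continuity by approximating with classical solutions. Fix $S\in[0,T]$. Since $A$ generates a group and, by Corollary \ref{cor1} together with \eqref{A5}, $e^{tA}$ preserves each $X_s$ ($0\le s\le 3$) with a bound uniform on $[0,T]$, the map $(\theta^0,u^0)\mapsto(\theta(S),u(S))=e^{SA}(\theta^0,u^0)$ is an isomorphism of $X_2$. Writing $(\Psi,W):=e^{SA}(\theta^0,u^0)$, the identity \eqref{A62} amounts to representing the functional $F_S(\Psi,W):=L_S(e^{-SA}(\Psi,W))$ by an element $(\eta(S),v(S))\in X_{-2}=(X_2)'$, so it suffices to show that $F_S$ is bounded on $X_2$ uniformly in $S$. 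First I would bound each term of $L_S$ in \eqref{A63}: the duality term by $\|(\eta^0,v^0)\|_{X_{-2}}\|(\theta^0,u^0)\|_{X_2}$; the four integral terms carrying $g_0,g_1,h_0,h_1$ by the $L^2(0,T)$-norms of these data times $\|\theta_{xx}(\cdot,0)\|_{L^2}$, $\|\theta_{xx}(\cdot,L)\|_{L^2}$, $\|u_{xx}(\cdot,0)\|_{L^2}$, $\|u_{xx}(\cdot,L)\|_{L^2}$, each $\le C\|(\theta^0,u^0)\|_{X_2}$ by Proposition \ref{prop3}; and the two pairings with $g_2,h_2$ by $\|g_2\|_{H^{-1/3}}$, $\|h_2\|_{H^{-1/3}}$ times $\|{\mathbf 1}_{(0,S)}\theta_x(\cdot,L)\|_{H^{1/3}}$, $\|{\mathbf 1}_{(0,S)}u_x(\cdot,0)\|_{H^{1/3}}$. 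For the last point I would invoke \cite[Th\'eor\`eme 11.4]{LM} to bound multiplication by ${\mathbf 1}_{(0,S)}$ on $H^{1/3}(0,T)$ uniformly in $S$ (here $1/3<1/2$ is essential), combined with \eqref{A42} to reach $\le C\|(\theta^0,u^0)\|_{X_2}$. Substituting $(\theta^0,u^0)=e^{-SA}(\Psi,W)$ and using $\|e^{-SA}(\Psi,W)\|_{X_2}\le C\|(\Psi,W)\|_{X_2}$ then gives the uniform bound on $F_S$ and hence \eqref{A71}.

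Uniqueness is immediate from Definition \ref{def1}: if $(\eta,v)$ solves \eqref{A50} with vanishing data, then $L_S\equiv 0$, so $\langle(\eta(S),v(S)),(\theta(S),u(S))\rangle_{X_{-2},X_2}=0$ for all $(\theta^0,u^0)\in X_2$; since $(\theta(S),u(S))=e^{SA}(\theta^0,u^0)$ ranges over all of $X_2$, we get $(\eta(S),v(S))=0$ for every $S$. To prove $(\eta,v)\in C([0,T],X_{-2})$ I would argue by density. For regular data, that is $(\eta^0,v^0)\in X_3$ and $h_i,g_i\in C^2([0,T])$ with the compatibility $h_i(0)=g_i(0)=0$ for $i=0,1,2$, a classical solution in $C([0,T],[H^3(0,L)]^2)\cap C^1([0,T],[L^2(0,L)]^2)\subset C([0,T],X_{-2})$ is produced by lifting the six boundary data by a smooth $(\tilde\eta,\tilde v)$ (a cubic in $x$ with time-dependent coefficients) and solving the resulting interior-forced problem for $(\eta,v)-(\tilde\eta,\tilde v)$ by Duhamel's formula with the group $e^{tA}$. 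Such a classical solution satisfies \eqref{A60}, which is precisely the transposition identity \eqref{A62}, hence it coincides with the transposition solution. For general data I would choose regular data converging to it in the norms of \eqref{A71}; applying \eqref{A71} to differences shows the classical solutions form a Cauchy sequence in $C([0,T],X_{-2})$, whose limit satisfies \eqref{A62} (pass to the limit in both sides using the uniform bounds) and, by uniqueness, equals $(\eta,v)$.

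For the sharper statement \eqref{A72} I would repeat the transposition argument testing against $X_1$ instead of $X_2$. When $g_0=g_1=h_0=h_1=0$, the functional $L_S$ reduces to the duality term plus $\int_0^S\theta_x(t,L)g_2(t)\,dt-\int_0^S u_x(t,0)h_2(t)\,dt$, whose only traces are $\theta_x(\cdot,L),u_x(\cdot,0)\in L^2(0,T)$, bounded by $C\|(\theta^0,u^0)\|_{X_1}$ through Proposition \ref{prop2} and \eqref{A20}. With $g_2,h_2\in L^2(0,T)$, $(\eta^0,v^0)\in X_{-1}$, and $e^{-SA}$ bounded on $X_1$, the same reasoning yields $(\eta(S),v(S))\in X_{-1}=(X_1)'$ and the estimate \eqref{A72}, and continuity into $X_{-1}$ follows as before. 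Since $X_2$ is dense in $X_1$ and the pairings are consistent, this $X_{-1}$-valued solution satisfies the $X_{-2}$-identity, hence coincides with the solution already constructed, which is therefore shown to live in the smaller space $C([0,T],X_{-1})$.

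The hard part will be the continuity in time: the transposition construction only delivers $(\eta(S),v(S))$ pointwise in $S$, and upgrading this to continuity in $X_{-2}$ forces one either to construct classical solutions for smooth data (via the lifting above) or to run a separate weak-to-strong continuity argument; the direct comparison of $F_{S'}$ and $F_S$ is awkward because the reparametrization $e^{-SA}$ itself depends on $S$. A second, more technical obstacle is the uniform-in-$S$ control of the cut-off factor ${\mathbf 1}_{(0,S)}$ in $H^{1/3}(0,T)$, where the restriction to the fractional order $1/3<1/2$ is what makes the estimate go through.
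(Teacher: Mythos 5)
Your proposal follows essentially the same route as the paper's proof: transposition against the homogeneous adjoint system, representation of $L_S$ in $X_{-2}$ via the uniformly bounded isomorphism $\Gamma_S=e^{SA}$ on $X_2$ (resp.\ $X_1$), the trace estimates of Propositions \ref{prop2}--\ref{prop3} together with the \cite[Th\'eor\`eme 11.4]{LM} bound on the cut-off ${\mathbf 1}_{(0,S)}$ in $H^{\frac{1}{3}}(0,T)$, and time continuity by density using regular solutions that satisfy \eqref{A60}. The only difference is that you spell out details the paper leaves implicit (the lifting construction of classical solutions and the Cauchy-sequence argument), so the argument is correct and matches the paper's.
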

\begin{proof}
Let $(\eta ^0, v^0)\in X_{-2}$, $g_0,g_1,h_0,h_1\in L^2(0,T)$, and $g_2,h_2\in H^{-\frac{1}{3}}(0,T)$. 
Pick any $(\theta ^0, u^0) \in X_2$ and denote by $(\theta , u)$ the solution of the homogeneous system \eqref{A51}. 
Then, by Proposition  \ref{prop3}, $\theta _{xx}(\cdot , 0), \theta _{xx}(\cdot , L), u_{xx}(\cdot , 0), u_{xx}(\cdot , L)\in L^2(0,T)$ and 
$\theta _x(\cdot , L), u_x(\cdot , 0)\in H^\frac{1}{3}(0,T)$. It follows that for any $S\in [0,T]$,  the linear map $L_S:X_{-2}\to \R$ defined in  \eqref{A63} is continuous.  As the map $\Gamma _S: (\theta ^0,u^0)\in X_2\mapsto (\theta (S), u(S))\in X_2$ is invertible, 
$(\eta (S), v(S))$ is defined in $X_{-2}$ in a unique way by \eqref{A62}. Furthermore, as $\Gamma _S$ and
$ \Gamma _S ^{-1} = \Gamma _{-S} $ are uniformly bounded in ${\mathcal L} ( X_2 )$ for $S\in [0,T]$ (see \eqref{A5}), 
we infer that $(\eta ,v)\in L^\infty (0,T, X_{-2})$ and that \eqref{A71} holds. On the other hand, 
if $(\eta ^0, v^0),(\theta ^0, u^0) \in X_3$ and $h_i, g_i\in C^2([0,T])$ with $h_i(0)=g_i(0)=0$  for $i=1,2,3$, 
the respective strong solutions $(\eta, v), (\theta , u) \in C([0,T],[H^3(0,L)]^2)\cap C^1([0,T], [L^2(0,L)]^2)$ of
\eqref{A50} and \eqref{A51} satisfy \eqref{A60}, and thus \eqref{A62}, for $X_3$ is dense in $X_2$. Using \eqref{A71} and a density argument, we infer that  $(\theta ,v)\in C([0,T], X_{-2})$ when we assume merely that $(\eta ^0, v^0)\in X_{-2}$, 
$g_0,g_1,h_0,h_1\in L^2(0,T)$, and $g_2,h_2\in H^{-\frac{1}{3}}(0,T)$.  

Assume finally that  $(\eta ^0, v^0)\in X_{-1}$, 
$g_0=g_1=h_0=h_1=0$ and $g_2,h_2\in L^2(0,T)$.  Pick any $(\theta ^0, u^0)\in X_2$. Then 
\begin{eqnarray}
 L_S(\theta ^0, u^0) &=&
\langle (\eta ^0, v^0), (\theta ^0, u^0)\rangle _{X_{-2},X_2}
+\langle g_2,  {\mathbf 1} _{(0,S)} \theta _x(\cdot , L) \rangle _{H^{-\frac{1}{3}} (0,T), H^\frac{1}{3}(0,T)}\nonumber \\ 
&& - \langle h_2,  {\mathbf 1} _{(0,S)} u_x(\cdot , 0) \rangle _{H^{-\frac{1}{3}} (0,T), H^\frac{1}{3}(0,T)} \nonumber \\
&=&
\langle (\eta ^0, v^0), (\theta ^0, u^0)\rangle _{X_{-1},X_1}
+ \int_0^S [ \theta _{x} (t,L) g_2(t) - u_{x} (t,0) h_2(t) ] dt.  \label{A80}
\end{eqnarray}
Using Proposition \ref{prop2} and \eqref{A80}, we see that $L_S$ can be extended as a continuous linear map from $X_1$ to $\R$. 
It follows from \eqref{A62}, the density of $X_2$ in $X_1$, and the fact that the map $(\theta ^0, u^0) \in X_1\mapsto (\theta (S), u(S))\in X_1$
is invertible with its norm and the norm of its inverse uniformly bounded for $S\in [0,T]$, that $(\eta , v)\in L^\infty (0,T, X_{-1})$ with
\eqref{A72} satisfied. The same argument as above shows that  $(\eta , v)\in C([0,T], X_{-1})$. 
\end{proof}

\section{Controllability of the linearized system}
In this section, we are interested in the exact controllability of the linear system \eqref{A50} using at most two control inputs (the other 
control inputs being replaced by 0). The exact controllability in $X_{-2}$ is defined as follows.
\begin{definition}
We say that the system \eqref{A50} is exactly controllable in $X_{-2}$ (in small time)  if for all $(\eta ^0,v^0),(\eta ^1,v^1)\in X_{-2}$ and all $T>0$, one can
 find $g_0,g_1,h_0,h_1\in L^2(0,T)$ and $g_2,h_2\in H^{-\frac{1}{3}} (0,T)$ such that the solution $(\eta , v)$ of \eqref{A50} satisfies
\be
\label{B1}
(\eta (T,\cdot ),v(T,\cdot )) = (\eta ^1, v^1). 
\ee
Here, we will restrict to situations when at most two control inputs can be prescribed, the other being set to 0. 
Similarly, we say that the system \eqref{A50} with $g_0=g_1=h_0=h_1=0$ is exactly controllable in $X_{-1}$ (in small time)  if for all $(\eta ^0,v^0),(\eta ^1,v^1)\in X_{-1}$ and all $T>0$, one can find $g_2,h_2\in L^2(0,T)$ such that the solution $(\eta , v)$ of \eqref{A50} satisfies \eqref{B1}.
\end{definition} 
As system \eqref{A1} is reversible, the exact controllability of system \eqref{A50} is equivalent to its null controllability (i.e., 
the above property with $(\eta ^0,v^0)$ arbitrary but $(\eta ^1,v^1)=(0,0)$). 

As it is well known, the exact controllability of a control system is equivalent to the observability of its adjoint system
(see e.g. \cite{coron-book,lions1,lions2}). Here, using  \eqref{A62}-\eqref{A63}, we see that the exact controllability of \eqref{A50} in $X_{-2}$ with the six controls
$g_0,g_1,h_0,h_1 \in L^2(0,T)$, $g_2,h_2\in H^{-\frac{1}{3}} (0,T)$
is equivalent to the existence of a constant $C>0$ such that for all $(\theta ^0, u^0)\in X_2$, 
\ba
\Vert (\theta ^0, u^0) \Vert ^2_{X_2} 
&\le& C \left( \int_0^T \big[|\theta _{xx}(t,0)|^2 + |\theta _{xx}(t,L)|^2 + |u_{xx}(t,0)|^2 +  |u_{xx}(t,L)|^2\big]dt \right. \nonumber  \\
&&\left. \qquad +  \Vert \theta _x(\cdot , L)  \Vert ^2 _{H^\frac{1}{3} (0,T)} + \Vert u_x(\cdot , 0) \Vert ^2 _{H^\frac{1}{3} (0,T)}  \right) \label{B2}
\ea
where $(\theta , u)$ denotes the solution of \eqref{A51}. 

The exact controllabity in $X_{-2}$  with less control inputs is equivalent to the observability inequality \eqref{B2} in which we remove the traces 
corresponding to the missing controls.  Similarly, the exact controllability  of \eqref{A50} in $X_{-1}$ with the two controls
$g_2,h_2 \in L^2(0,T)$ is equivalent to the existence of a constant $C>0$ such that for all $(\theta ^0, u^0)\in X_1$, 
\be
\Vert (\theta ^0, u^0) \Vert ^2_{X_1} 
\le C \int_0^T \big[ |\theta _{x}(t,L)|^2 + |u_{x}(t,0)|^2 \big]dt .
\label{B3}
\ee
The aim of this section is to provide a complete picture of the exact controllability results for system \eqref{A50} with one or two control inputs
among $g_0,g_1,g_2,h_0,h_1,h_2$. 

We introduce some subsets of $(0,+\infty )$ that are needed to give a complete picture of the critical lengths 
for the control of the linearized KdV-KdV system:

\begin{eqnarray*}
\cN       &:=& \left\{ \frac{2\pi}{\sqrt{3}} \sqrt {k^2+kl+l^2} ; \quad  k,l\in \N ^* \right\} , \\
\cN _3  &:=&  \left\{ \frac{2\pi}{\sqrt{3}} \sqrt {k^2+kl+l^2} ; \quad  k,l\in \N ^*, \ 3 \vert 2k+l  \right\} \subset \cN , \\
\cR       &:=& \left\{ \pi \sqrt{(\frac{1}{2} + 2k)^2 + (\frac{1}{2} +2l)^2 + (\frac{1}{2}+2k)(\frac{1}{2}+2l) } ; \quad  k,l\in \Z, \ k\ne l \right\} , \\
\cG &:=& \left\{ L\in (0,+\infty);  \ 
\exists a,b\in \C, \ \frac{2a}{ie^a-1}+a=\frac{2b}{ie^b-1} + b = \frac{ 2(-a-b) }{ie^{-a-b} -1}  -a-b \ne 0 , \right. \nonumber \\
&& \left. \qquad L^2=-(a^2+ab+b^2) \right\} , \\
\cG '&:=& \left\{ L\in (0,+\infty);  \ 
\exists a,b\in \C, \ \frac{2a}{-ie^a-1}+a=\frac{2b}{-ie^b-1} + b = \frac{ 2(-a-b) }{-ie^{-a-b} -1}  -a-b \ne 0 , 
\right. \nonumber \\
&& \left. \qquad  L^2= - (a^2+ab+b^2) \right\} . \\
\end{eqnarray*}

Then the control results for the linearized Boussinesq system with one or two boundary controls  of Dirichlet or Neumann type 
are outlined in the following theorem.

\begin{theorem}
\label{thm1} 
Consider system \eqref{A50} with one or two control inputs among $g_0,g_1,g_2,h_0,h_1,h_2$ (the other being set to 0). Then 
the exact controllability of \eqref{A50} holds (in any time $T>0$) if and only if $L$ is not a critical length, a concept depending 
on the set of control inputs that are available. The precise results are reported in Table 1.  
\end{theorem}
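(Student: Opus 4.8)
The plan is to reduce the exact controllability of \eqref{A50} to an observability inequality for the adjoint system \eqref{A51}, then to a spectral (Ingham-type) problem, and finally to an explicit entire-function criterion solved by the Paley-Wiener method, exactly as sketched in the introduction. Since the operator $A$ is skew-adjoint (Proposition \ref{prop1}) and the system is reversible, exact controllability in $X_{-2}$ (resp. $X_{-1}$) is equivalent to the observability inequalities \eqref{B2} (resp. \eqref{B3}), in which one retains only the traces dual to the chosen controls. So for each of the cases in Table 1, the task is to decide for which $L$ the corresponding observability inequality holds.

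First I would dispose of the easy direction. When $L$ is claimed to be critical, I would exhibit an eigenfunction of $A$ (equivalently of the spectral problem $-y'''-y'=\lambda y$ with the appropriate boundary conditions) for which all the retained boundary traces vanish; this single eigenfunction violates observability, so exact controllability fails. Conversely, to prove observability for non-critical $L$, I would argue by contradiction using a compactness-uniqueness scheme: if \eqref{B2} (or \eqref{B3}) fails, a normalized sequence of solutions produces, after extracting and using the conservation of the $X_0$-norm together with the trace estimates of Propositions \ref{prop2}--\ref{prop3}, a nonzero solution whose retained traces vanish identically on $(0,T)$. By the reversibility and the group structure of $(e^{tA})_{t\in\R}$, this reduces to a unique continuation property (UCP) for the stationary problem. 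The Morawetz-multiplier identity \eqref{A200} (and its $x\to L-x$ counterpart) supplies the first step of the reduction in all cases except Case 3, for which \eqref{A200} is useless and the weaker estimate \eqref{TYU}, proved in the appendix via the spectral reduction of $A$, must be substituted.

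The heart of the argument is the spectral/UCP step. Expanding $(\theta^0,u^0)$ on an eigenbasis of $A$, vanishing of the retained traces for all $t$ forces each eigenfunction whose spectral projection is nonzero to have those same traces vanish. Following \eqref{AA100}--\eqref{AA105}, I would extend a candidate eigenfunction by $0$ outside $(0,L)$ and take the Fourier transform; the boundary conditions together with the vanishing traces translate into the requirement that a ratio of the form $\dfrac{(\text{exponential-polynomial numerator})}{\xi^3-\xi+p}$ be an \emph{entire} function, i.e. that every root of the characteristic cubic $\xi^3-\xi+p$ be a root of the numerator with at least the same multiplicity. For Case 1 this is precisely the pair $f,g$ displayed in the introduction. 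Writing the three roots of the cubic as $\xi_1,\xi_2,\xi_3$ with $\xi_1+\xi_2+\xi_3=0$ and $\xi_1\xi_2+\xi_2\xi_3+\xi_3\xi_1=-1$, the entirety condition becomes a finite algebraic/transcendental system; eliminating the free parameters $(\alpha,\beta,\gamma,\dots)$ and using the symmetries ($x\leftrightarrow L-x$ and $\eta\leftrightarrow v$) collapses it to conditions on the roots that, after setting $a=iL\xi_1$, $b=iL\xi_2$, yield the relations defining $\cN$, $\cN_3$, $\cR$, $\cG$, $\cG'$ in each case, together with $L^2=-(a^2+ab+b^2)$.

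I expect the main obstacle to be this last elimination, carried out case by case: keeping track of which boundary traces survive for each choice of one or two controls, correctly encoding the corresponding boundary conditions on $(\theta,u)$ into the numerators of $f$ and $g$, and then extracting from the entirety requirement the precise critical set. The symmetric cases (where the defining relations are the explicit square-root formulas for $\cN$, $\cN_3$, $\cR$) should be tractable by the same algebra as in \cite{R1}, because the symmetry makes the numerator factor favorably; the genuinely hard cases are those lacking symmetry—Case 3 (only $g_0$) and Case 12 ($h_1,g_0$)—where, as for $\tilde{\mathcal N}$ in \cite{GG2}, the critical set is only implicitly describable (the transcendental sets $\cG,\cG'$), and where one must instead lean on the weak observability estimate \eqref{TYU} and a separate unique continuation argument to conclude.
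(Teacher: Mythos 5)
Your proposal follows essentially the same route as the paper's proof: duality reduction of exact controllability to the observability inequalities \eqref{B2}--\eqref{B3}, a compactness--uniqueness argument (via the Morawetz identity \eqref{A200}, with the weak spectral estimate \eqref{TYU} of Theorem \ref{thm100} substituting in the asymmetric cases) reducing matters to a spectral problem with extra trace conditions, and the Paley--Wiener entire-function criterion for $\frac{\text{exponential-polynomial}}{\xi^3-\xi+p}$ solved case by case to identify $\cN$, $\cN_3$, $\cR$, $\cG$, $\cG'$. The only cosmetic difference is that the paper's reduction to the spectral problem goes through the finite-dimensional $A$-invariant space $N_T$ rather than an eigenbasis/Ingham expansion (the latter appears only in the appendix), which does not change the substance.
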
 

\[%
\begin{array}
[c]{c}%
\begin{tabular}
[c]{|c|c|c|c|c|c|c|c|c|c|}\hline
& \multicolumn{7}{|c|}{Controls} & \multicolumn{2}{|c|}{Properties}\\\hline
Case & $h_0$ & $h_1$ & $h_2$ & $g_0$ & $g_1$ & $g_2$ & Control
Inputs & State Space & Set of Critical\\
&&&&&&&&&  Lengths\\\hline
1 & $0$ & $0$ & $0$ & $0$ & $0$ & $\star$ & $g_2\in L^{2}(0,T)$ & $(\eta
_{0},w_{0})\in X_{-1}$ & $\mathcal{N}$\\\hline
2 & $0$ & $0$ & $0$ & $0$ & $\star$ & $0$ & $g_1\in L^{2}(0,T)$ & $(\eta
_{0},w_{0})\in X_{-2}$ & $\mathcal{N}\cup\mathcal{R}$\\\hline
3 & $0$ & $0$ & $0$ & $\star$ & $0$ & $0$ & $g_0\in L^{2}(0,T)$ & $(\eta
_{0},w_{0})\in X_{-2}$ &  ${\mathcal N}  \cup \cG \cup \cG '$ \\ \hline
4 & $0$ & $0$ & $\star$ & $0$ & $0$ & $\star$ & $h_2,g_2\in L^{2}(0,T)$ & $(\eta
_{0},w_{0})\in X_{-1}$ & $\mathcal{N}$\\\hline
5 & $0$ & $\star$ & $0$ & $0$ & $\star$ & $0$ & $h_1,g_1\in L^{2}(0,T)$ &
$(\eta_{0},w_{0})\in X_{-2}$ & $\emptyset$\\\hline
6 & $0$ & $\star$ & $0$ & $0$ & $0$ & $\star$ & $h_1\in L^{2}(0,T), g_2\in H^{-\frac{1}{3}}(0,T)$ &
$(\eta_{0},w_{0})\in X_{-2}$ & $\mathcal{N}$\\\hline
7 & $0$ & $0$ & $0$ & $0$ & $\star$ & $\star$ & $g_1\in L^{2}(0,T), g_2\in H^{-\frac{1}{3}}(0,T)$ & $(\eta
_{0},w_{0})\in X_{-2}$ & $\cN$ \\\hline
8 & $\star$ & $0$ & $0$ & $0$ & $0$ & $\star$ & $h_0\in L^{2}(0,T),g_2\in H^{-\frac{1}{3}}(0,T)$ &
$(\eta_{0},w_{0})\in X_{-2}$ & $\cN$ \\\hline
9 & $0$ & $0$ & $0$ & $\star$ & $0$ & $\star$ & $g_0\in L^{2}(0,T), g_2\in H^{-\frac{1}{3}}(0,T)$ &
$(\eta_{0},w_{0})\in X_{-2}$ & $\cN$ \\\hline
10 & $\star$ & $0$ & $0$ & $0$ & $\star$ & $0$ & $h_0,g_1\in L^{2}(0,T)$ &
$(\eta_{0},w_{0})\in X_{-2}$ & $\cR $ \\\hline
11 & $0$ & $0$ & $0$ & $\star$ & $\star$ & $0$ & $g_0,g_1\in L^{2}(0,T)$ &
$(\eta_{0},w_{0})\in X_{-2}$ &$\cN _3$ \\\hline
12 & $0$ & $\star$ & $0$ & $\star$ & $0$ & $0$ & $h_1,g_0\in L^{2}(0,T)$ &
$(\eta_{0},w_{0})\in X_{-2}$ &$\cG \cup \cG '$ \\\hline
\end{tabular}
\ \ \smallskip\\
\text{Table 1. Controllability results for the linearized system}%
\end{array}
\]
\begin{remark}
Using the symmetries, we notice that all the possibilities with one or two control inputs  are really considered in Table 1: indeed,
$h_2$ alone is similar to
$g_2$ alone (case 1); $h_0$ alone is similar to $g_1$ alone (case 2); $h_1$ alone is similar to $g_0$ alone (case 3); 
the pair $(h_0,g_0)$ is similar to the pair $(h_1,g_1)$  (case 5); the pair $(h_2,g_1)$ is similar to the pair $(h_0,g_2)$ 
(case 8); the pair $(h_1,h_2)$ is similar to the pair $(g_0,g_2)$ (case 9); the pair $(g_0,h_2)$ is similar to the pair $(h_1,g_2)$
 (case 6); the pair $(h_0,h_2)$ is similar to the pair $(g_1,g_2)$  (case 7); the pair $(h_0,h_1)$ is similar to the pair $(g_0,g_1)$  (case 11). 
\end{remark}
The proof of Theorem \ref{thm1} is detailed in a series of propositions or theorems displayed in several subsections. 
\subsection{Neumann controls}
We consider first two controls (case 4) and next only one control (case 1). 
We shall use repeatedly the following results from \cite{R1}.
\begin{proposition}
\label{propR1} (\cite[Proposition 3.3]{R1})
For all $L\in (0,+\infty )\setminus {\mathcal N}$ and $T>0$, there exists a constant $C=C(L,T)>0$ such that 
\be
\int_0^L |y^0(x)|^2 \, dx \le C \int_0^T |y_x(t,0) |^2 \, dt, \qquad  \forall y^0\in L^2(0,L), 
\ee
where $y=y(t,x)$ denotes the solution to the linearized KdV system
\be
\label{B6}
\left\{ 
\begin{array}{ll}
y_t + y_{x}+y_{xxx}=0, \quad &t\in (0,T), \ x\in (0,L),\\
y(t,0)=y(t,L)=y_x(t,L)=0, \quad &t\in (0,T), \\
y(0,x)=y^0(x), \quad &x\in (0,L). 
\end{array}
\right. 
\ee
\end{proposition}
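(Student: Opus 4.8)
The plan is to prove the observability inequality by the classical compactness--uniqueness method, reducing it to the spectral problem \eqref{AA100}, which I would then solve by the Paley--Wiener technique to recover the critical set $\mathcal N$. Write $A_0 y=-(y_x+y_{xxx})$ with domain $D(A_0)=\{y\in H^3(0,L):y(0)=y(L)=y_x(L)=0\}$. Multiplying \eqref{B6} by $y$ and integrating by parts gives the dissipation law $\frac{d}{dt}\Vert y(t)\Vert_{L^2(0,L)}^2=-|y_x(t,0)|^2$, so $A_0$ generates a $C_0$-semigroup of contractions and $\int_0^T|y_x(t,0)|^2\,dt=\Vert y^0\Vert_{L^2}^2-\Vert y(T)\Vert_{L^2}^2$. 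A Morawetz multiplier identity of the type \eqref{A200} (here multiplying \eqref{B6} by $xy$) yields in addition the hidden-regularity bound $\int_0^T\Vert y(t)\Vert_{H^1(0,L)}^2\,dt\le C\Vert y^0\Vert_{L^2(0,L)}^2$.

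In view of the dissipation law, the observability inequality is equivalent to the triviality of the unobservable subspace $N_T=\{y^0\in L^2(0,L):y_x(\cdot,0)\equiv 0\text{ on }(0,T)\}$. I would argue by contradiction: were the inequality to fail, there would be a sequence $y_n^0$ with $\Vert y_n^0\Vert_{L^2}=1$ and $\int_0^T|y_{n,x}(t,0)|^2\,dt\to 0$; the hidden-regularity bound together with the equation and an Aubin--Lions compactness argument would extract a subsequence of solutions converging strongly in $L^2((0,T)\times(0,L))$ to a solution $y$ of \eqref{B6} with $y_x(\cdot,0)\equiv 0$. The delicate point, which I expect to be the main obstacle, is to prove the limit is nonzero; this is handled by the standard fact that $N_T$ is a finite-dimensional subspace invariant under the semigroup, so that if $N_T\neq\{0\}$ it must contain an eigenfunction of $A_0$.

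The problem is thereby reduced to showing that, for $L\notin\mathcal N$, there is no $\lambda\in\C$ and $\phi\in H^3(0,L)$, $\phi\not\equiv 0$, solving $\phi'''+\phi'+\lambda\phi=0$ with $\phi(0)=\phi(L)=\phi'(0)=\phi'(L)=0$, which is precisely the spectral problem \eqref{AA100}. Extending $\phi$ by $0$ to $\R$, the four vanishing boundary conditions put the extension in $H^2(\R)$ with support in $[0,L]$; taking Fourier transforms and inserting the ODE, I would obtain $\hat\phi(\xi)=i(\alpha-\beta e^{-iL\xi})/(\xi^3-\xi+p)$ with $\alpha=\phi''(0)$, $\beta=\phi''(L)$ and $p=i\lambda$, exactly the function $f$ of \eqref{AA105}. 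By the Paley--Wiener theorem $\hat\phi$ must be entire, so the three roots of $\xi^3-\xi+p$ have to be roots of $\alpha-\beta e^{-iL\xi}$ with at least the same multiplicity.

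It remains to run the algebra. Since $(\alpha-\beta e^{-iL\xi})'=iL\beta e^{-iL\xi}$ vanishes nowhere unless $\beta=0$ (which forces $\alpha=0$ and hence $\phi\equiv 0$), the cubic can only have three simple roots $\xi_1,\xi_2,\xi_3$, and $e^{-iL\xi_j}=\alpha/\beta$ for each $j$; thus $L(\xi_j-\xi_k)\in 2\pi\Z$, so the differences of roots are real, and together with $\xi_1+\xi_2+\xi_3=0$ and $\xi_1\xi_2+\xi_1\xi_3+\xi_2\xi_3=-1$ (Vieta) this forces all three roots to be real. Writing $\xi_2-\xi_1=2\pi k/L$ and $\xi_3-\xi_1=2\pi l/L$ with $k,l\in\Z^*$, $k\neq l$, a short computation gives $\frac{4\pi^2}{3L^2}(k^2-kl+l^2)=1$, that is $L=\frac{2\pi}{\sqrt 3}\sqrt{k^2-kl+l^2}$; after a sign change of $l$ the resulting set of lengths is exactly $\mathcal N$. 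Hence for $L\notin\mathcal N$ no such $\phi$ exists, $N_T=\{0\}$, and the compactness argument delivers the observability inequality.
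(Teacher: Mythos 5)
Your proof is correct and takes essentially the same route as the paper's source for this statement: the proposition is quoted from \cite[Proposition 3.3]{R1}, whose argument --- compactness--uniqueness reduction to the spectral problem \eqref{AA100}, Paley--Wiener analyticity of the function \eqref{AA105}, and the root algebra identifying $\mathcal{N}$ --- is exactly the scheme you carry out (and is also the template the paper itself reuses for Theorem \ref{thm2}). Your final algebra checks out as well, since $\{k^2-kl+l^2:\ k,l\in\Z^*,\ k\neq l\}$ coincides with $\{k^2+kl+l^2:\ k,l\in\N^*\}$ after the sign change $l\to -l$.
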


\begin{lemma}
\label{lemR1} (\cite[Lemma 3.5]{R1}) Let $L>0$. Then $L\in {\mathcal N}$ if and only if  
there exist some numbers $p\in \C$ and $(\alpha, \beta)\in \C ^2\setminus \{ (0,0) \}$ such that the map 
\be
\label{B7}
f(\xi ) =\frac{\alpha -\beta e^{-i L\xi}}{\xi ^3 -\xi +p}
\ee
is an entire function, i.e. a complex analytic function on $\C$. 
\end{lemma}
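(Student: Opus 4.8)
The plan is to translate the analytic condition ``$f$ is entire'' into the algebraic condition that every root of the cubic $q(\xi)=\xi^3-\xi+p$ is a zero, of at least equal multiplicity, of the numerator $n(\xi)=\alpha-\beta e^{-iL\xi}$, and then to compute explicitly which $L$ admit such a configuration. Since $n$ and $q$ are entire, $f=n/q$ is entire precisely when all the poles are removable, i.e. when this matching of zeros holds.

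First I would dispose of the degenerate cases for $(\alpha,\beta)$. If $\beta=0$ then $\alpha\neq 0$ and $n$ is a nonzero constant, which vanishes nowhere, while $q$ always has roots; hence $f$ cannot be entire. If $\alpha=0$ then $n(\xi)=-\beta e^{-iL\xi}$ never vanishes, and the same obstruction occurs. So $\alpha\beta\neq 0$. Next, since $n'(\xi)=iL\beta e^{-iL\xi}$ and at any zero of $n$ one has $\beta e^{-iL\xi}=\alpha\neq 0$, every zero of $n$ is simple. Consequently $q$ must have three distinct simple roots $\xi_0,\xi_1,\xi_2$, and each of them satisfies $e^{-iL\xi_j}=\alpha/\beta$.

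From $e^{-iL\xi_j}=\alpha/\beta$ for $j=0,1,2$ I would deduce $e^{-iL(\xi_j-\xi_k)}=1$, i.e. $\xi_j-\xi_k\in\frac{2\pi}{L}\Z$. Writing $\xi_1-\xi_0=\frac{2\pi k}{L}$ and $\xi_2-\xi_0=\frac{2\pi l}{L}$ with $k,l\in\Z$, distinctness forces $k,l,k-l$ all nonzero. Vieta's formulas for $q$, namely $\xi_0+\xi_1+\xi_2=0$ and $\xi_0\xi_1+\xi_1\xi_2+\xi_2\xi_0=-1$, then determine the roots: the first gives $\xi_0=-\frac{2\pi}{3L}(k+l)$, whence $\xi_1=\frac{2\pi}{3L}(2k-l)$ and $\xi_2=\frac{2\pi}{3L}(2l-k)$, and using $\xi_0\xi_1+\xi_1\xi_2+\xi_2\xi_0=-\frac12(\xi_0^2+\xi_1^2+\xi_2^2)$ together with the identity $(k+l)^2+(2k-l)^2+(2l-k)^2=6(k^2-kl+l^2)$ the second relation collapses to $\frac{4\pi^2}{3L^2}(k^2-kl+l^2)=1$, that is $L^2=\frac{4\pi^2}{3}(k^2-kl+l^2)$. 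Conversely, given $L$ of this form I would define the three roots by the formulas above, set $p=-\xi_0\xi_1\xi_2$ and $(\alpha,\beta)=(e^{-iL\xi_0},1)$; by construction $q$ has these three distinct roots and $n$ vanishes at each, so $f$ is entire.

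The hard part is purely arithmetic: one must check that the value set $\{k^2-kl+l^2:\ k,l\in\Z,\ k,l,k-l\neq0\}$ coincides with $\{k^2+kl+l^2:\ k,l\in\N^*\}$, so that the lengths produced above are exactly the elements of $\mathcal N$. This follows from the symmetries of the binary quadratic form $Q(k,l)=k^2+kl+l^2$: the substitution $l\mapsto -l$ turns $k^2-kl+l^2$ into $k^2+kl+l^2$, and the invariance of $Q$ under the symmetry group of the hexagonal (Eisenstein) lattice lets one normalize any admissible integer pair to one with $k,l\ge 1$ without changing the value of $Q$. Verifying this set identity carefully, and confirming that the nondegeneracy constraints $k,l,k-l\neq0$ do not shrink the value set, is where the real bookkeeping lies; the analytic and Vieta steps are routine once the degenerate cases are cleared.
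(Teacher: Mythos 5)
Your proposal is correct. Note that the paper itself gives no proof of this lemma: it is quoted verbatim from \cite[Lemma 3.5]{R1}, and the argument there is the same in spirit as yours — $f$ is entire iff every root of $\xi^3-\xi+p$ is a zero of $\alpha-\beta e^{-iL\xi}$ of at least equal multiplicity, the zeros of the numerator are simple, so the cubic has three distinct roots whose pairwise differences lie in $\frac{2\pi}{L}\Z$, and Vieta's formulas then pin down $L$. Where you diverge is the endgame. You measure the differences from a fixed root, which yields signed integers $k,l\in\Z$ with $k,l,k-l\neq 0$ and the form $k^2-kl+l^2$, and you then need the arithmetic identification of the value set $\{k^2-kl+l^2\}$ over admissible signed pairs with $\{k^2+kl+l^2:\ k,l\in\N^*\}$ via the symmetries of the Eisenstein form (your case analysis with the substitutions $(k,m)\mapsto(k+m,-m)$ and $(k,m)\mapsto(-(k+m),k)$ does close this, so there is no gap). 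The cited proof short-circuits this step: since all pairwise differences of the roots are real, the three roots share a common imaginary part, and the zero sum of the roots forces that imaginary part to vanish; the roots are therefore real, can be ordered $\mu_0<\mu_1<\mu_2$, and the two \emph{consecutive} gaps $\mu_1-\mu_0=\frac{2\pi k}{L}$, $\mu_2-\mu_1=\frac{2\pi l}{L}$ come with $k,l\ge 1$ automatically, so Vieta delivers $L^2=\frac{4\pi^2}{3}(k^2+kl+l^2)$ directly, with no set-theoretic reconciliation needed. Both routes are valid; the realness-plus-ordering observation is what your version trades for the extra lattice arithmetic.
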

The controllability result for the case 4 is a consequence of the following observability inequality. 
\begin{proposition}
\label{prop5}
Let $L\not\in {\mathcal N}$ and $T>0$. Then there exists a constant $C=C(L,T)>0$ such that
\be
\label{B11}
\Vert (\theta ^0,u^0)\Vert ^2_{X_1} \le C \int_0^T \big[ |\theta _x(t,L)|^2 + |u_x(t,0 ) |^2 \big] dt, \qquad \forall (\theta ^0,u^0)\in X_1,
\ee 
where $(\theta , u)$ denotes the solution of \eqref{A51}.
\end{proposition}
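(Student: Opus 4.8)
The plan is to prove \eqref{B11} by the classical compactness--uniqueness method of \cite{R1}, which reduces the full observability inequality to a spectral problem that is then solved through the Paley--Wiener characterization of $\cN$ recalled in Lemma \ref{lemR1}. First I would establish the \emph{weak} observability inequality
\be
\Vert (\theta ^0,u^0)\Vert ^2_{X_1} \le C\left( \int_0^T [|\theta _x(t,L)|^2 + |u_x(t,0)|^2]\, dt + \Vert (\theta ^0, u^0)\Vert ^2_{X_0}\right) ,
\ee
in which a compact lower order term has been added to the right hand side. This follows from the Morawetz multiplier identity \eqref{A200}: since $e^{tA}$ is an isometry on $X_0$ and on $X_3$, hence on $X_1$, both $\Vert (\theta ,u)(t)\Vert _{X_0}$ and $\Vert (\theta ,u)(t)\Vert _{X_1}$ are conserved, while $\int_0^L [\theta _x^2+u_x^2]\, dx$ is equivalent to $\Vert (\theta ,u)(t)\Vert ^2_{X_1}$. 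Integrating \eqref{A200} in time and bounding the boundary term $\left[\int_0^L x\theta u\, dx\right]_0^T$ by $\Vert (\theta ^0,u^0)\Vert ^2_{X_0}$ then yields the claim (in fact the single trace $\theta _x(t,L)$ already suffices at this stage).

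Next I would run the compactness--uniqueness argument. Arguing by contradiction, suppose \eqref{B11} fails; then there is a sequence $(\theta ^0_n, u^0_n)\in X_1$ with $\Vert (\theta ^0_n,u^0_n)\Vert _{X_1}=1$ and $\int_0^T [|\theta _{n,x}(t,L)|^2 + |u_{n,x}(t,0)|^2]\, dt\to 0$. By the weak inequality, $\Vert (\theta ^0_n,u^0_n)\Vert _{X_0}$ stays bounded away from $0$; the compact embedding $X_1\hookrightarrow X_0$ lets me extract a subsequence converging in $X_0$, and the weak inequality applied to the differences shows the sequence is Cauchy, hence convergent, in $X_1$. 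The limit $(\theta ^0,u^0)$ satisfies $\Vert (\theta ^0,u^0)\Vert _{X_1}=1$ and $\theta _x(\cdot ,L)=u_x(\cdot ,0)\equiv 0$ on $(0,T)$. As in \cite{R1}, the space $N$ of initial data producing solutions with these two vanishing traces is finite dimensional (by the weak inequality) and invariant under the group, so the complexification of $A$ admits on $N$ an eigenvector; the problem reduces to showing that for $L\notin \cN$ there is no nonzero $(\theta ,u)\in D(A)$ and $\lambda \in \C$ with $A(\theta ,u)=\lambda (\theta ,u)$ satisfying, in addition to the boundary conditions of \eqref{A51}, the extra conditions $\theta _x(L)=u_x(0)=0$.

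To solve this spectral problem I would exploit the symmetry of the system through the change of unknowns $w=\theta +u$, $z=\theta -u$. A direct computation turns $A(\theta ,u)=\lambda (\theta ,u)$ into the two \emph{decoupled} scalar equations $-w'''-w'=\lambda w$ and $-z'''-z'=-\lambda z$. The decisive gain is at the level of the boundary conditions: collecting $\theta (0)=\theta (L)=\theta _x(0)=0$, $u(0)=u(L)=u_x(L)=0$ from $D(A)$ together with the extra conditions $\theta _x(L)=u_x(0)=0$, one finds that each of $\theta$ and $u$ — and hence each of $w$ and $z$ — satisfies \emph{all four} homogeneous conditions $y(0)=y(L)=y'(0)=y'(L)=0$. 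Extending $w$ by $0$ outside $(0,L)$ and taking the Fourier transform, the vanishing of $w$ and $w'$ at both endpoints gives $\widehat{w}(\xi )=(\alpha -\beta e^{-iL\xi })/(\xi ^3-\xi +p)$ with $p=i\lambda$ and $(\alpha ,\beta )$ proportional to $(w''(0),w''(L))$; since $w$ is compactly supported this is an entire function, so if $w\not\equiv 0$ then $(\alpha ,\beta )\ne (0,0)$ and Lemma \ref{lemR1} forces $L\in \cN$. The same argument applied to $z$ (with $p=-i\lambda$) gives the same conclusion. Hence $L\notin \cN$ forces $w\equiv z\equiv 0$, i.e. $(\theta ,u)=0$, contradicting $\Vert (\theta ^0,u^0)\Vert _{X_1}=1$, and \eqref{B11} is proved.

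The main obstacle is the spectral step, and in particular the realization that using \emph{both} observed traces $\theta _x(\cdot ,L)$ and $u_x(\cdot ,0)$ is exactly what upgrades the three boundary conditions of $D(A)$ into the four symmetric conditions $y(0)=y(L)=y'(0)=y'(L)=0$ needed for the diagonalized variables, so that each of $w$ and $z$ lands precisely on the Paley--Wiener problem of \cite{R1} whose critical set is $\cN$; with only one trace, $w$ and $z$ would carry just three boundary conditions and the reduction would break down. The remaining points — the finite dimensionality and group invariance of $N$ — are the routine, classical ingredients of the compactness--uniqueness scheme, which I would borrow from \cite{R1}.
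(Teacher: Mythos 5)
Your proof is correct, but it follows a genuinely different route from the paper's. The paper's proof of Proposition \ref{prop5} stops much earlier: after the same compactness step produces a limit $(\theta^0,u^0)$ of unit $X_1$-norm whose solution has $\theta_x(\cdot,L)=u_x(\cdot,0)\equiv 0$, it observes that $y(t,x):=\theta(t,x)+u(t,x)$ and $z(t,x):=\theta(t,L-x)-u(t,L-x)$ each solve the linearized KdV equation with boundary conditions $y(t,0)=y(t,L)=y_x(t,L)=0$ and vanishing trace $y_x(\cdot,0)$, and then invokes Proposition \ref{propR1} (the quantitative KdV observability inequality of \cite{R1}) as a black box to conclude $\theta^0+u^0=0$ and $\theta^0(L-\cdot)-u^0(L-\cdot)=0$, hence $\theta^0=u^0=0$; no spectral reduction, finite-dimensionality argument, or Paley--Wiener analysis appears in that proof at all. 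You instead rebuild the compactness--uniqueness--spectral scheme inside the Boussinesq system, importing only the algebraic Lemma \ref{lemR1}: your decoupling $w=\theta+u$, $z=\theta-u$ (with eigenvalue $-\lambda$ for $z$, harmless since the lemma allows any $p\in\C$) is the spectral-level counterpart of the paper's reflection $x\mapsto L-x$, which plays the same role of fixing the sign of the transport term at the PDE level. What each approach buys: the paper's is shorter and delegates all the hard analysis to the cited KdV result; yours is self-contained modulo Lemma \ref{lemR1}, and it is essentially the template the paper must deploy anyway (in full detail, with a harder spectral problem) for Theorem \ref{thm2}, where only one trace is observed and the clean four-condition symmetry $y(0)=y(L)=y'(0)=y'(L)=0$ that makes your decoupling land exactly on \eqref{AA100} is no longer available --- a point your closing remark identifies accurately. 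The ingredients you defer to \cite{R1} (finite dimensionality of $N_T$, invariance under the group, existence of an eigenvector) are indeed routine and are carried out in detail in the paper's proof of Theorem \ref{thm2}, so there is no gap.
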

\begin{proof}
If \eqref{B11} is not true, one can find two sequences $(\theta^0_n)_{n\ge 0}$ and $(u^0_n)_{n\ge 0}$ such that 
\be
\label{B21}
1=\Vert (\theta ^0_n, u^0_n )\Vert ^2 _{X_1} > n \int_0^T  [ | \theta _{n,x} (t,L) |^2 + |u _{n,x} (t,0) |^2]dt.  
\ee
Extracting a subsequence if necessary, one can assume that $(\theta ^0_n,u^0_n)\to (\theta ^0 ,u^0)$ weakly in $X_1$, and
hence strongly in $X_0$. In particular, we have that 
\begin{eqnarray*}
(\theta ^0_n,u^0_n)\to  (\theta ^0, u^0)&& \textrm{ strongly in } X_0, \\
(\theta _n,u_n)\to  (\theta , u) && \textrm{ strongly in } C([0,T],X_0), \\
(\theta  _n,u_n)\to  (\theta , u)&& \textrm{ weakly in } L^2(0,T,X_1), \\
\theta  _{x,n} (\cdot , L)  \to  \theta_x (\cdot , L) && \textrm{ weakly in } L^2(0,T), \\
u_{x,n} (\cdot , 0) \to  u_x(\cdot , 0)&& \textrm{ weakly in } L^2(0,T).
\end{eqnarray*}
Using \eqref{B21}, we infer that $\theta  _{x,n} (\cdot , L)  \to 0$ and $u_{x,n} (\cdot , 0) \to 0$ strongly in $L^2(0,T)$. Thus 
$\theta _x(\cdot , L)=u_x(\cdot , 0)=0$. It follows that $y(t,x):=\theta (t,x) +u(t,x)$ satisfies 
\[
\left\{ 
\begin{array}{ll}
y_t + y_{xxx}+y_x=0, \quad &t\in (0,T), \ x\in (0,L),\\
y(t,0)=y(t,L)=y_x(t,L)=0, \quad &t\in (0,T), \\
y(0,x)=\theta ^0(x) +u^0(x), \quad &x\in (0,L)
\end{array}
\right. 
\]  
together with $y_x(t,0)=0$. We infer from Proposition \ref{propR1} that $\theta ^0+u^0=0$. Similarly, 
considering the function $z(t,x) :=\theta(t,L-x)-u(t,L-x)$,  we can
conclude that $\theta ^0(L-x)-u^0(L-x)=0$. Therefore, $\theta ^0=u^0=0$, and thus $(\theta _n , u_n)\to (0,0)$ strongly in $C([0,T], X_0)$.
But it follows from the above convergences and 
from \eqref{A200}  that $(\theta _n,u_n) \to (0,0)$ strongly in $L^2(0,T, X_1)$, and with  \eqref{A5}, that $( \theta _n ^0, u_n^0) \to (0,0)$ strongly in $X_1$. 
This contradicts \eqref{B21}. 
\end{proof}
Let us investigate the exact controllability of \eqref{A50} in case 4. If $L\not\in {\mathcal N}$, then by Proposition \ref{prop5}, 
system \eqref{A50} is exactly controllable in $X_{-1}$. If $L\in {\mathcal N}$, then by  \cite[Remark 3.6 (i)]{R1} there  exists a nontrivial
solution $y$ of \eqref{B6} such that $y_x(\cdot , 0)=0$. Then $(\theta (t,x),u(t,x)):=(y(t,x)+y(t,L-x), y(t,x)-y(t,L-x))$ is a nontrivial solution of 
\eqref{A51} such that $\theta _x(\cdot , L)=u_x(\cdot , 0)=0$. It follows that \eqref{A50} is not exactly controllable in $X_{-1}$ if $L\in \cN$.

We now turn into the problem of the controllability of \eqref{A50} with only one Neumann control (case 1).  
The following observability inequality improves Proposition \ref{prop5}, for only {\em one} boundary measurement is needed 
to estimate the initial data. 
\begin{theorem}
\label{thm2}
Let $L\not\in {\mathcal N}$ and $T>0$. Then there exists a constant $C=C(L,T)>0$ such that
\be
\label{B31}
\Vert (\theta ^0,u^0)\Vert ^2_{X_1} \le C \int_0^T  |\theta _x(t,L ) |^2  dt, \qquad \forall (\theta ^0,u^0)\in X_1,
\ee 
where $(\theta, u)$ denotes the solution of \eqref{A51}. 
\end{theorem}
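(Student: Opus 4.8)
The plan is to prove the single‑observation inequality \eqref{B31} by a compactness--uniqueness argument that reduces it to a unique continuation property (UCP), and then to settle the UCP by the spectral reduction and Paley--Wiener method of Lemma \ref{lemR1}.

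\medskip
\noindent\emph{Step 1: observability up to a compact term.}
First I would extract from the Morawetz identity \eqref{A200} an estimate controlled by the \emph{single} trace $\theta_x(\cdot,L)$. Rearranging \eqref{A200}, using that $t\mapsto \Vert(\theta,u)(t)\Vert_{X_0}$ is constant (Corollary \ref{cor1}) and that $|\int_0^L x\theta u\,dx|\le C\Vert(\theta^0,u^0)\Vert_{X_0}^2$, one gets $\int_0^T\Vert(\theta,u)\Vert_{X_1}^2\,dt\le C(\int_0^T|\theta_x(t,L)|^2\,dt+\Vert(\theta^0,u^0)\Vert_{X_0}^2)$. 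Since the group is bounded on $X_1$ by \eqref{A5} and reversible, $\Vert(\theta^0,u^0)\Vert_{X_1}\le C\Vert(\theta,u)(t)\Vert_{X_1}$, and integrating in $t$ yields
\[
\Vert(\theta^0,u^0)\Vert_{X_1}^2\ \le\ C\Big(\int_0^T|\theta_x(t,L)|^2\,dt+\Vert(\theta^0,u^0)\Vert_{X_0}^2\Big).
\]
Only $\theta_x(\cdot,L)$ appears, so the usual contradiction argument goes through with one measurement: if \eqref{B31} failed, a normalized counterexample sequence (with $X_1$-norm $1$ and vanishing trace) would, after extracting a weak $X_1$-limit that is strong in $X_0$ by compact embedding, produce a \emph{nonzero} limit $(\theta^0,u^0)$ whose solution of \eqref{A51} satisfies $\theta_x(\cdot,L)\equiv 0$ on $(0,T)$. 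Thus \eqref{B31} follows once I prove the UCP: the only solution of \eqref{A51} with $\theta_x(\cdot,L)\equiv 0$ is trivial.

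\medskip
\noindent\emph{Step 2: reduction to an eigenvalue problem.}
The space $N_T$ of initial data generating solutions with $\theta_x(\cdot,L)\equiv 0$ is, by the estimate above, finite dimensional (its $X_1$- and $X_0$-norms are equivalent, so its unit ball is $X_0$-compact), and it is invariant under the group $e^{tA}$ (a standard consequence of the monotonicity of $T\mapsto N_T$ and finite dimensionality), hence under $A$. If $N_T\neq\{0\}$ it contains an eigenvector $(\theta,u)$ of the skew-adjoint operator $A$, with eigenvalue $\lambda\in i\R$, satisfying in addition $\theta'(L)=0$. Setting $y:=\theta+u$, $z:=\theta-u$ decouples the eigenvalue system into $-y'''-y'=\lambda y$ and $-z'''-z'=-\lambda z$, with $y(0)=y(L)=0$, $z(0)=z(L)=0$; the extra condition $\theta'(L)=0$ together with $u'(L)=0$ forces $y'(L)=z'(L)=0$, while $\theta'(0)=0$ gives the single coupling $z'(0)=-y'(0)$. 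Extending $y,z$ by $0$ (the eigenfunctions are analytic, being solutions of a constant-coefficient ODE) and taking Fourier transforms, the Paley--Wiener theorem shows that, with $p=i\lambda$,
\[
\hat y(\xi)=\frac{\alpha+\beta i\xi+\gamma e^{-iL\xi}}{\xi^3-\xi+p},\qquad \hat z(-\xi)=\frac{\alpha'-\beta i\xi+\gamma' e^{iL\xi}}{\xi^3-\xi+p}
\]
must both be entire, the coefficient $\beta$ being shared because $z'(0)=-y'(0)$, and $(\alpha,\beta,\gamma,\alpha',\gamma')\ne 0$ unless $(\theta,u)=0$.

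\medskip
\noindent\emph{Step 3: the algebra forcing $L\in\cN$.}
It remains to show these two entireness conditions force $L\in\cN$. Let $r_1,r_2,r_3$ be the roots of $\xi^3-\xi+p$. Entireness of $\hat y$ and of $\hat z(-\cdot)$ gives $\alpha+\beta i r_j+\gamma e^{-iLr_j}=0$ and $\alpha'-\beta i r_j+\gamma' e^{iLr_j}=0$ for each $j$; multiplying the relations $\gamma e^{-iLr_j}=-(\alpha+\beta i r_j)$ and $\gamma' e^{iLr_j}=-(\alpha'-\beta i r_j)$ \emph{eliminates the exponentials} and yields the fixed quadratic $\beta^2 r_j^2+i\beta(\alpha'-\alpha)r_j+(\alpha\alpha'-\gamma\gamma')=0$, valid at the three distinct roots $r_1,r_2,r_3$. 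A nonzero quadratic has at most two roots, so $\beta=0$. With $\beta=0$ both functions reduce to the form of Lemma \ref{lemR1}, whose entireness characterizes exactly $L\in\cN$; since $L\notin\cN$ this forces $(\alpha,\gamma)=(\alpha',\gamma')=0$, hence $\hat y=\hat z=0$ and $(\theta,u)=0$, contradicting $N_T\neq\{0\}$ and proving \eqref{B31}. I expect the main obstacle to be the degenerate case in which $\xi^3-\xi+p$ has a multiple root — only the finitely many real values $p=\pm\frac{2}{3\sqrt{3}}$ — where the ``three distinct roots'' step collapses and must be replaced by a direct check incorporating the derivative (multiplicity) conditions, verifying separately that no critical length outside $\cN$ arises there; the Paley--Wiener representation for the merely $H^3$ eigenfunctions in $D(A)$ is the more routine point to justify.
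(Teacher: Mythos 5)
Your Steps 1 and 2 coincide in substance with the paper's own proof: the same Morawetz identity \eqref{A200} drives the compactness--uniqueness reduction, the same finite-dimensionality/invariance argument produces an eigenfunction of $A$ in $N_T^{\C}$ satisfying \eqref{B41}--\eqref{B44} together with $\theta'(L)=0$, and your Paley--Wiener step for $y=\theta+u$, $z=\theta-u$ yields exactly the paper's pair $f(\xi)=\hat\theta(\xi)+\hat u(\xi)$, $g(-\xi)$. In the generic case your Step 3 is in fact a pleasant simplification: multiplying the two root conditions to eliminate the exponentials and obtain the quadratic $\beta^2 r^2+i\beta(\alpha'-\alpha)r+(\alpha\alpha'-\gamma\gamma')=0$ at three distinct roots forces $\beta=0$ at once, after which Lemma \ref{lemR1} applies to both functions; the paper reaches the same conclusion slightly more indirectly (for $\beta\neq 0$ it shows each root $\mu_k$ lies in a two-element set, so $Q$ cannot have three distinct roots).

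The genuine gap is the degenerate case, which you flag but do not prove, and it is not a deferrable formality. When $Q$ has a double root ($\mu_0=\pm 1/\sqrt 3$, $p=\pm 2/(3\sqrt3)$), your quadratic identity holds at only two distinct points and gives no information on $\beta$, so nothing you have established excludes a nontrivial solution there; and in the sibling spectral problems of this very paper the double-root case is precisely where the work lies --- in Proposition \ref{prop88} it produces real constraints, and in Theorem \ref{thm11} (Proposition \ref{prop8}) its exclusion even requires a numerical computation. So asserting that ``no critical length outside $\cN$ arises there'' is a claim that must be proved, not expected. The paper closes it concretely: the numerator of $f$ must vanish to order $2$ at $\mu_0$ and to order $1$ at $\mu_2=\mp 2/\sqrt 3$, giving \eqref{B61}--\eqref{B63}; the derivative condition \eqref{B62} yields $\beta=L\gamma e^{\mp iL/\sqrt3}$, so either $\gamma=0$, hence $\beta=0$ and Lemma \ref{lemR1} finishes as in your generic case, or $\gamma\neq0$ and combining \eqref{B61} with \eqref{B63} gives \eqref{B64}, whose two sides have moduli $|\gamma|\sqrt{1+3L^2}$ and $|\gamma|$, forcing $L=0$, a contradiction. (The triple-root case must also be ruled out, though that one is immediate: $Q''(\mu_0)=0$ gives $\mu_0=0$, $p=0$, and then $Q$ has simple roots.) With this block inserted, your argument becomes complete and is essentially the paper's proof.
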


\begin{proof}
We follow closely \cite{R1}. In the first step, we transform the problem into a spectral problem. In the second step, we solve the spectral problem by using Paley-Wiener theorem combined with complex analysis. Introduce some notations.  For any $T>0$, let 
\begin{eqnarray*}
N_T &:=&\{ (\theta ^0, u^0)\in X_1;\  \textrm{ the solution } (\theta ,u) \textrm{ of \eqref{A51}  satisfies } \theta _x(t,L)=0 \textrm{ for a.e. } t\in (0,T) \} , \\
M_T&:=&\{ (\theta, u)=e^{tA}(\theta ^0, u^0); \ (\theta ^0, u^0)\in N_T, \ t\in [0,T]\} \subset C([0,T],X_1).  
\end{eqnarray*} 
\noindent
{\sc Step 1. Reduction to a spectral problem.}\\

Pick any $L\in (0,+\infty ) \setminus \cN$ and any $T>0$. If \eqref{B31} is not true, one can find a sequence $(\theta^0_n,u^0_n)_{n\ge 0}$ in $X_1$ such that 
\be
\label{B32}
1=\Vert (\theta ^0_n, u^0_n )\Vert ^2 _{X_1} > n \int_0^T   |\theta _{n,x} (t,L)|^2 dt, \qquad \forall n\ge 0.  
\ee
Extracting a subsequence if necessary, one can assume that $(\theta ^0_n,u^0_n)\to (\theta ^0 ,u^0)$ weakly in $X_1$, and
hence strongly in $X_0$. Let $(\theta _n,u_n)$ and $(\theta ,u)$ denote the solutions of \eqref{A51} associated with $(\theta ^0_n,u^0_n)$ and
$(\theta ^0,u^0)$, respectively.  
Using \eqref{B32}, we infer that $\theta _{n,x} (\cdot , L) \to 0$ strongly in $L^2(0,T)$, and hence
$\theta _x(\cdot , L)=0$.
Using  \eqref{A200}, \eqref{B32} and the fact that $(\theta _n,u_n)\to (\theta , u)$ in $C([0,T], X_0)$,  we see that $(\theta _n,u_n)\to (\theta  ,u)$ strongly in $L^2(0,T,X_1)$, so that 
$(\theta ^0_n , u^0_n)\to (\theta ^0,u^0)$ strongly in $X_1$, and hence $\Vert (\theta ^0, u^0)\Vert _{X_1}=1$. 
Thus $(\theta ^0, u^0)\in N_T \setminus \{ (0,0)\} $. To obtain a contradiction, it is sufficient to prove the following\\[3mm] 
{\sc Claim 1.} $N_T=\{ (0,0)\} $ for all $T>0$.\\[3mm]
It is clear that $T<T'\Rightarrow N_{T'}\subset N_T$. On the other hand, 
the vector space $N_T$ has a finite dimension, for its unit ball is compact.  Indeed, the same argument as above shows that any 
sequence $(\theta ^0_n,u^0_n)_{n\in \N}$ in the unit ball of $N_T$ has a convergent subsequence for the norm $\Vert \cdot\Vert _{X_1}$.  
If Claim 1 is not true, one may find $T'>0$ such that $\textrm{dim } N_{T'}>0$. Since the map $T\in (0,+\infty ) \mapsto \textrm{dim } N_T \in \N$ is
nonincreasing, one may pick $T\in (0,T')$ and $\varepsilon >0$ such that $T+\varepsilon <T'$ and
$\textrm{dim  } N_T=\textrm{dim }N_{T+\varepsilon} \ge \textrm{dim  } N_{T'} >0$.   It follows that $N_t=N_T$ for $T\le t\le T+\varepsilon$. 
Let $(\theta ^0, u^0)\in N_T$, and let $(\theta, u)$ denote the solution of \eqref{A51}.  By  the semigroup property, we have that 
\[ 
\frac{1}{t}\left(  (\theta (t, \cdot ),u(t,\cdot ))-(\theta ^0, u^0)\right)  \in N_T, \quad \forall t\in (0,\varepsilon ), 
\]
and hence 
\[ 
\frac{1}{t}\left(  (\theta (t+\cdot, \cdot ),u(t+\cdot, \cdot ))-(\theta , u)  \right) \in M_T, \quad \forall t\in (0,\varepsilon ). 
\]
Since $(\theta , u)\in H^1(0,T+\varepsilon,[H^{-2}(0,L)]^2)$, we have that 
\[
\lim_{t \to 0^+} \frac{1}{t}\left(  (\theta (t+\cdot, \cdot ),u(t+\cdot, \cdot ))-(\theta , u)  \right) =(\theta _t, u_t)\quad
\textrm{ in } L^2(0,T, [H^{-2}(0,L)]^2). 
\] 
But the vector space $M_T$ is closed in $L^2(0,T, [H^{-2}(0,L)]^2)$, for it is finite-dimensional. It follows that 
$(\theta _t , u_t)\in M_T\subset C([0,T], X_1)$, so that $(\theta ,u)\in C^1 ([0,T], X_1)$. This implies that 
\[
(\theta ^0, u^0)\in D(A)=X_3, \quad A(\theta ^0, u^0) =(\theta _t(0,.), u_t(0,.))\in N_T\quad \textrm{ and } \theta _x(\cdot , L)\in C([0,T]).
\]

In particular 
\[
\frac{d\theta ^0}{dx}(L)=\theta _x(0,L)=0. 
\]

Let $N_T^\C$ denote the complexification of $N_T$, and let $'=d/dx$. Since it was assumed that $\textrm{ dim } N_T\ne 0$, we infer that the 
linear map   $(\theta ^0,u^0)\in N_T^\C \mapsto A(\theta ^0,u^0)\in N_T^\C$ has at least one eigenvalue; that is, there exist 
$\lambda \in \C$  and $(\theta ,u)\in X_3\setminus \{ (0,0) \}$ \footnote{We removed the superscript $0$ to simplify the notations. We still used the notation $X_3$ to 
denote the space of {\em complex-valued functions} in \eqref{MMMM2}.} solution of the spectral problem
\ba
-u'-u'''=\lambda \theta,&&  x\in (0,L), \label{B41}\\
-\theta ' -\theta '''=\lambda u,&& x\in (0,L), \label{B42}\\
\theta (0)=\theta (L)=\theta '(0)=\theta '(L)=0,\label{B43}   \\
u(0)=u(L)=u'(L)=0.&& \label{B44}
\ea
We show in the next step that \eqref{B41}-\eqref{B44} has no nontrivial solution when $L\not\in \cN$. \\

\noindent
{\sc Step 2. Study of the spectral problem.}\\

We shall prove the following\\[3mm]
{\sc Claim 2.} For $L\not\in \cN$, if $\lambda\in \C$ and $(\theta, u)\in X_3$ satisfy \eqref{B41}-\eqref{B44}, then $\theta =u=0$. \\[3mm]
Let $(\theta ,u)$ be as in Claim 2, and extend $\theta$ and $u$ to $\R$ by setting  $\theta(x)=u(x)=0$ for $x\not\in [0,L]$.  Then we have in ${\mathcal S }'(\R )$ 
\begin{eqnarray*}
\lambda \theta +u'+u'''  &=& u'(0)\delta '_0 + u''(0) \delta _0-u''(L)\delta _L, \\
\lambda u + \theta '+ \theta '''  &=& \theta ''(0) \delta _0 -\theta ''(L) \delta _L,
\end{eqnarray*}
where $\delta_\zeta$ denotes the Dirac measure at $x=\zeta$ and the derivatives $u'(0),u''(0),u''(L),\theta ''(0),\theta ''(L)$ are those of the 
functions $u$ and $\theta$ when restricted to $[0,L]$.  Taking the Fourier transform of each term in the above system, we obtain
\begin{eqnarray*}
\lambda \hat \theta (\xi )  + i\xi \hat u (\xi )  + (i\xi )^3 \hat u (\xi)  &=& u'(0) i\xi + u''(0)-u''(L) e^{-iL\xi}, \\
\lambda \hat u (\xi ) + i\xi \hat \theta  (\xi)  + (i\xi )^3 \hat \theta (\xi )  &=& \theta ''(0) -\theta ''(L) e^{-iL\xi }.
\end{eqnarray*}   
Setting 
$\lambda = -ip$, $f(\xi) :=\hat \theta (\xi) +\hat u(\xi)$,
and $g(\xi ) := \hat \theta (\xi) -\hat u(\xi)$, we arrive to  
\begin{eqnarray*}
f(\xi ) &=& \frac{i}{\xi ^3-\xi +p} ( \alpha +\beta i\xi +\gamma  e^{-iL\xi} ) ,\\
g(-\xi ) &=& \frac{i}{\xi ^3 -\xi +p} (\alpha ' -\beta i\xi + \gamma ' e^{iL\xi} ),
\end{eqnarray*}
where 
$\alpha := u''(0)+\theta ''(0)$,
$\alpha ' := u''(0)-\theta ''(0)$,  
$\beta := u'(0)$, 
$\gamma  := -u''(L) -\theta ''(L)$, and
$\gamma ' :=  -u''(L )  + \theta ''(L)$.   
Since both $\theta$ and $u$ have a compact support, it follows from the Paley-Wiener theorem that the functions $f$ and $g$ have  to be 
entire  (i.e. holomorphic in the whole plane $\C$). The same is true for the function
\[
h(\xi):=f(\xi ) + g(-\xi) = \frac{ie^{-iL\xi}}{\xi ^3-\xi +p} (\gamma ' e^{2iL\xi} +(\alpha + \alpha ') e^{i L \xi} + \gamma). 
\]
Introduce the polynomial functions $P(z) :=\gamma ' z^2 + (\alpha +\alpha ') z + \gamma$ and $Q(\xi) := \xi ^3-\xi +p$, and let 
$\mu _k$, $k=0,1,2$, denote the three roots of $Q$. Then $P(e^{iL\mu _k})=0$ for $k=0,1,2$, since $h$ is entire. 
Let 
\[
z_\pm = \frac{ -(\alpha + \alpha ') \pm \sqrt{ (\alpha + \alpha ')^2 -4\gamma \gamma ' }} {2\gamma ' }
\] 
denote the roots of the polynomial function $P$. \\
(i) If $\beta=0$, then applying Lemma \ref{lemR1} to $f(\xi )$ and $g(-\xi)$, we infer that $\alpha=\gamma=\alpha '=\gamma '=0$, so that 
\[
u''(0)=u''(L)=u'(0)=\theta ''(0)=\theta ''(L)=0. 
\]
It follows then that $\theta=u=0$ in $[0,L]$, as desired.\\
(ii) If $\beta \ne 0$, then since $g(-\xi ) $ is entire,  each root $\mu _k$ of $Q$ has to satisfy
$\alpha ' -\beta i \mu _k+\gamma ' e^{ i L\mu _k}=0$, i.e. 
$\mu _k =(\alpha ' +\gamma ' e^{iL\mu _k} ) / ( i\beta)$. Since $e^{iL\mu _k}$ is a root of $P$,  
we arrive to the conclusion that 
\[
\mu _k \in \left\{ \frac{\alpha ' + \gamma ' z_+}{i\beta } ,   \frac{\alpha ' + \gamma ' z_-}{i\beta }  \right\} \cdot 
\]
We infer that $Q$ cannot have three distinct roots.\\
Assume that $Q$ has a root of order 3, i.e. $\mu _0=\mu_1=\mu _2$. Then we have that $Q(\mu _0)=Q'(\mu _0)=Q''(\mu _0)=0$. 
Since $Q''(\xi )=6\xi$, we conclude that $\mu _0=0$, hence $p=0$, so that $Q(\xi ) =\xi (\xi -1)(\xi +1)$ has three distinct roots, a contradiction.   

We arrive to the conclusion that $Q$ has a double root and a simple root. We can assume that $\mu _0=\mu _1\ne \mu _2$. Then, we infer from 
$Q(\mu _0)=Q'(\mu _0)=0$ and the fact that $Q' (\xi)= 3\xi ^2- 1$ that $\mu _0=\pm \frac{1}{\sqrt{3}}$ and $p=\pm \frac{2}{3\sqrt{3}}$. \\
$\star$ Assume that $\mu _0=\frac{1}{\sqrt{3}}$. Then $Q(\xi ) = \xi ^3-\xi  +\frac{2}{3\sqrt{3}}  = (\xi - \frac{1}{\sqrt{3}} )^2 (\xi + \frac{2}{\sqrt{3}})$. 
On the other hand, since $f$ is entire,  $\mu _2=-\frac{2}{\sqrt{3}}$ (resp. $\mu _0$)  has to be a root (resp. a root of order at least 2)  of the numerator of $f$, i.e. 
\begin{eqnarray}
\alpha +\frac{i}{ \sqrt{3} }   \beta  + \gamma  e^{ -i \frac{L}{\sqrt{3}}} &=&0, \label{B61}\\
i\beta -  iL \gamma  e^{-i\frac{L}{\sqrt{3}}} &=& 0,\label{B62} \\
\alpha -\frac{2i}{\sqrt{3}}\beta + \gamma e^{2i\frac{L}{\sqrt{3}}}&=&0.  \label{B63}
\end{eqnarray}
From \eqref{B62}, we infer that $\beta = L\gamma e^{-i L/\sqrt{3}}$. Substituting this value of $\beta$ in \eqref{B61} and \eqref{B63} and taking
the difference of the two obtained equations, we obtain that 
\be
\label{B64}
(i\sqrt{3} L\gamma  + \gamma) e^{-i\frac{L}{\sqrt{3}}} = \gamma e^{2i\frac{L}{\sqrt{3}}}. 
\ee  
- If $\gamma =0$, we obtain $\beta =0$ which contradicts the assumption in (ii).  \\
- If $\gamma \ne 0$, then taking the module of both terms in \eqref{B64} yields $L=0$, which is impossible.\\
$\star$ Assume now that $\mu _0=-\frac{1}{\sqrt{3}}$. Then, proceeding as above, we obtain that $\beta = L\gamma e^{i L/\sqrt{3}}$ and that 
\[
(-i\sqrt{3} L\gamma  + \gamma) e^{i\frac{L}{\sqrt{3}}} = \gamma e^{-2i\frac{L}{\sqrt{3}}}. 
\]
Again, we see that  we obtain a contradiction for $\gamma =0$ or for $\gamma \ne 0$.
The proof of Theorem \ref{thm2} is complete.   
\end{proof}
With Theorem \ref{thm2} at hand, we deduce that \eqref{A50} is exactly controllable in $X_{-1}$ with $g_2$ as unique control (case 1) when 
$L\not\in\cN $. On the other hand, the exact controllability fails when $L\in \cN$ according to case 4.  

Let us turn our attention to the cases 6, 7, 8 and 9 for which we have added one Dirichlet control to the Neumann control $g_2$.
First, we notice that in case 1, we have for $L\not\in \cN$ the exact controllability in $X_{-2}$ by picking the controls $g_2$ 
in $H^{-\frac{1}{3}}(0,T)$.  Indeed, from \eqref{B31}, we have for $(\theta ^0,u^0)\in X_4$
\[
\Vert A(\theta ^0, u^0)\Vert _{X_1} ^2 \le C  \int_0^T|\theta _{xt} (t,L) | ^2 dt
\]
 and hence 
 \be
 \label{B31a}
 \Vert (\theta ^0, u^0)\Vert _{X_4} ^2 \le C  \Vert \theta _{x} (\cdot , L) \Vert _{H^1(0,T )} ^2.
 \ee
By interpolation between \eqref{B31} and \eqref{B31a}, we infer that for all $(\theta ^0,u^0)\in X_2$,  
  \be
 \label{B31b}
 \Vert (\theta ^0, u^0)\Vert _{X_2} ^2 \le C  \Vert \theta _{x} (\cdot , L) \Vert _{H^\frac{1}{3}(0,T )} ^2.
 \ee
The observability inequality \eqref{B31b} implies the exact controllability in $X_{-2}$ in case 1 (with $g_2\in H^{-\frac{1}{3}}(0,T)$)
 as well as in cases  6 to  9 for $L\not\in\cN$. 
It remains to show that any $L\in \cN$ is still a critical length in cases 6 to 9. We claim that in each of those cases, the 
corresponding spectral problem has for $L\in \cN$ a nontrivial solution $(\theta ,u)$, so that the  corresponding observability inequality 
fails for the exponential solution $\textrm{Re} [ e^{\lambda t}(\theta ,u)]$.    

\begin{proposition}
\label{prop6}
Let $L\in \cN$. Then\\
1. (case 6) there exist 
$\lambda \in \C$  and $(\theta ,u)\in X_3\setminus \{ (0,0) \}$ \ satisfying \eqref{B41}-\eqref{B44} and $u''(L)=0$. \\
2. (case 7) there exist 
$\lambda \in \C$  and $(\theta ,u)\in X_3\setminus \{ (0,0) \}$ \ satisfying \eqref{B41}-\eqref{B44} and $\theta  ''(L)=0$. \\
3. (case 8) there exist 
$\lambda \in \C$  and $(\theta ,u)\in X_3\setminus \{ (0,0) \}$ \ satisfying \eqref{B41}-\eqref{B44} and $u ''(0)=0$. \\
4. (case 9) there exist 
$\lambda \in \C$  and $(\theta ,u)\in X_3\setminus \{ (0,0) \}$ \ satisfying \eqref{B41}-\eqref{B44} and $\theta '' (0)=0$.
\end{proposition}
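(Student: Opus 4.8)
The plan is to exhibit, for each case, an explicit eigenmode of the operator $A$ satisfying the required extra boundary condition, by building it from a single KdV eigenfunction and its spatial reflection. First I would recall that the membership $L\in\mathcal N$ is precisely the degeneracy condition for the scalar spectral problem attached to \eqref{B6}. By the analysis of \cite{R1} (the counterpart of \eqref{AA100} obtained through the symmetry $x\mapsto L-x$, equivalently \cite[Remark 3.6(i)]{R1}, already invoked in the treatment of case 4), there exist $\lambda_0\in\C$ and $\varphi\in H^3(0,L)\setminus\{0\}$ with
\[
-\varphi'''-\varphi'=\lambda_0\varphi,\qquad \varphi(0)=\varphi(L)=\varphi'(0)=\varphi'(L)=0 .
\]
Setting $\psi(x):=\varphi(L-x)$, a one‑line computation gives $-\psi'''-\psi'=-\lambda_0\psi$ together with $\psi(0)=\psi(L)=\psi'(0)=\psi'(L)=0$, and records the trace identities $\psi''(0)=\varphi''(L)$, $\psi''(L)=\varphi''(0)$. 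The point is that $\varphi$ and $\psi$ both vanish \emph{together with their first derivatives} at each endpoint.

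Next I would diagonalize the spectral system \eqref{B41}--\eqref{B42}: with $f:=\theta+u$ and $g:=\theta-u$ it decouples into $f'''+f'+\lambda f=0$ and $g'''+g'-\lambda g=0$. For arbitrary $a,b\in\C$ I then set $f=a\varphi$, $g=b\psi$, that is
\[
\theta=\tfrac{1}{2}(a\varphi+b\psi),\qquad u=\tfrac{1}{2}(a\varphi-b\psi),\qquad \lambda=\lambda_0 .
\]
Since $\varphi$ and $\psi$ have all four first–order traces equal to zero, every member of this family belongs to $X_3$ and satisfies all the boundary conditions \eqref{B43}--\eqref{B44} for \emph{every} choice of $(a,b)$; moreover $(\theta,u)=(0,0)$ forces $a\varphi=b\psi=0$, hence $(a,b)=(0,0)$, so the solution is nontrivial as soon as $(a,b)\neq(0,0)$. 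Thus one obtains a two–parameter family of nontrivial solutions of \eqref{B41}--\eqref{B44}.

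It then remains only to meet the single extra scalar condition of each case, which, using $\theta''=\tfrac12(a\varphi''+b\psi'')$, $u''=\tfrac12(a\varphi''-b\psi'')$ and the trace identities above, reduces to one homogeneous linear equation in $(a,b)$: case 6 ($u''(L)=0$) reads $a\varphi''(L)=b\varphi''(0)$; case 7 ($\theta''(L)=0$) reads $a\varphi''(L)=-b\varphi''(0)$; case 8 ($u''(0)=0$) reads $a\varphi''(0)=b\varphi''(L)$; and case 9 ($\theta''(0)=0$) reads $a\varphi''(0)=-b\varphi''(L)$. Each of these admits a solution $(a,b)\neq(0,0)$, which produces the claimed $(\lambda,\theta,u)$.

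I do not expect a genuine obstacle here, as the construction is completely explicit; the only steps requiring care are verifying that the reflected function $\psi$ inherits the full set of vanishing first–order traces — so that membership in $X_3$ and conditions \eqref{B43}--\eqref{B44} hold for the whole family independently of $(a,b)$ — and the correct bookkeeping of the second–derivative traces under $x\mapsto L-x$, which is what turns each extra condition into a solvable homogeneous linear relation.
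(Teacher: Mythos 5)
Your proof is correct, and it takes a genuinely different route from the paper's. All the key steps check out: the reflection $\psi(x)=\varphi(L-x)$ satisfies $-\psi'''-\psi'=-\lambda_0\psi$ with the same four vanishing first-order traces and with $\psi''(0)=\varphi''(L)$, $\psi''(L)=\varphi''(0)$; the sum/difference decoupling of \eqref{B41}--\eqref{B42} is exact; the two-parameter family $\theta=\tfrac12(a\varphi+b\psi)$, $u=\tfrac12(a\varphi-b\psi)$ with $\lambda=\lambda_0$ satisfies \eqref{B43}--\eqref{B44} and lies in $X_3$ for \emph{every} $(a,b)$, and is nontrivial whenever $(a,b)\neq(0,0)$; and each extra second-order trace condition is a single homogeneous linear equation in $(a,b)\in\C^2$, hence nontrivially solvable. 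Your starting point --- that $L\in\cN$ yields a nontrivial $\varphi$ solving the scalar problem \eqref{AA100} --- is exactly the characterization from \cite{R1} that the paper itself invokes (via \cite[Remark 3.6(i)]{R1}) in its treatment of case 4. The paper's own proof of this proposition, by contrast, stays on the Fourier side: it introduces the real roots $\mu_0,\mu_1,\mu_2$ of $Q(\xi)=\xi^3-\xi+p$ attached to $L=\frac{2\pi}{\sqrt{3}}\sqrt{k^2+kl+l^2}$, notes that $C:=e^{-iL\mu_k}$ is independent of $k$, sets $\beta=u'(0)=0$, and then chooses explicit coefficients $(\alpha,\alpha',\gamma,\gamma')$ making the two Paley--Wiener functions $f$ and $g$ entire while enforcing the extra condition, which there reads $\gamma'=\mp\gamma$ (cases 6, 7) or $\alpha'=\mp\alpha$ (cases 8, 9); the Paley--Wiener theorem then converts this back into a nontrivial solution of the spectral problem. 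The two arguments are dual to each other --- with $\beta=0$, the paper's $f$ and $g$ are precisely the Fourier transforms of your $a\varphi$ and $b\psi$ --- but yours is the more elementary presentation: it requires no explicit roots, no constant $C$, and no re-run of the Paley--Wiener transfer, only two-dimensional linear algebra on top of the quoted scalar spectral fact, whereas the paper's version produces explicit Fourier-side data and fits seamlessly into the entire-function machinery already set up in the proof of Theorem \ref{thm2}.
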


\begin{proof}
Let $L\in {\mathcal N}$. Then we can write $L=2\pi \sqrt{\frac{k^2+kl+l^2}{3}}$ with $k,l\in \N^*$. Following \cite{R1}, we introduce the numbers
\[
\mu _0= - \frac{1}{3} (2k+l)\frac{2\pi}{L} , \quad
\mu _1 := \mu _0+k\frac{2\pi}{L}, \quad
\mu _2=\mu _1 + l\frac{2\pi}{L}, \quad
p:=i\lambda = -\mu _0\mu _1\mu _2. 
\]
We use again the notations in the proof of Theorem \ref{thm2} (Claim 2). Let
\begin{eqnarray*}
f(\xi ) &=& \frac{i}{\xi ^3-\xi +p} ( \alpha +\beta i\xi +\gamma  e^{-iL\xi} ),\\
g(-\xi ) &=& \frac{i}{\xi ^3 -\xi +p} (\alpha ' -\beta i\xi + \gamma ' e^{iL\xi} ),
\end{eqnarray*}
where
$\alpha := u''(0)+\theta ''(0)$,
$\alpha ' := u''(0)-\theta ''(0)$,  
$\beta := u'(0)$, 
$\gamma  := -u''(L) -\theta ''(L)$, and
$\gamma ' :=  -u''(L )  + \theta ''(L)$.  
We know that we can find coefficients $\alpha, \beta, \gamma, \alpha ', \gamma '$ not all zero so that the two functions $f(\xi ) $ and $g (-\xi ) $ 
are entire, the roots of $Q(\xi) =\xi ^3-\xi +p$ being $\mu_0,\mu_1,\mu _2$. Furthermore, it follows from Paley-Wiener theorem 
(see \cite{R1}) that 
the spectral problem \eqref{B41}-\eqref{B44} has indeed a nontrivial solution.  Our concern is to prove that we can as well 
impose the addition condition in each case.

From our choice of the $\mu _k$'s, we have that the quantity $e^{-i L \mu _k}$ is independent of $k$. Set 
\[
C:=e^{-iL\mu_0} =  e^{-iL\mu _1} = e^{-iL\mu _2}.  
\]
We shall pick $\beta =0$ in all the cases. 
 \\
1.  The additional condition $u''(L)=0$ is equivalent to $\gamma ' = -\gamma$. We can pick $\gamma =1$, $\gamma' =-1$, 
$\alpha =-C$, and $\alpha ' =\overline{C}$. \\
2.  The additional condition $\theta ''(L) =0 $ is equivalent to $\gamma '=\gamma $. We can pick $\gamma =\gamma' =1$, 
$\alpha =-C$, and $\alpha ' =-\overline{C}$. \\
3.  The additional condition $u''(0)=0$ is equivalent to $\alpha ' = -\alpha $. We can pick
$\alpha =1$ and $\alpha ' =-1$, $\gamma =-\overline{C}$, and $\gamma' =C$.  \\
4. The additional condition $\theta ''(0)=0$ is equivalent to $\alpha ' = \alpha $. We can pick
$\alpha =\alpha ' =1$, $\gamma =-\overline{C}$, and $\gamma' =-C$.  
\end{proof}

\subsection{Dirichlet controls} We consider the cases in which only Dirichlet controls are involved. We start with a 
preparatory result which is an observability inequality with the measurement of three traces. 

\begin{proposition}
\label{prop7}
For all $L>0$ and $T>0$, there is a constant $C=C(L,T)>0$ such that 
\be
\label{C1} 
\Vert (\theta ^0,u^0)\Vert ^2 _{X_2} \le C \int _0^T \big[   | \theta _{xx} (t,L)|^2 
+ | u_{xx}  ( t,L ) |^2  + | \theta _x (t,L)|^2 \big] \, dt, \qquad \forall (\theta ^0, u^0)\in X_2. 
\ee
\end{proposition}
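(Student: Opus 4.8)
The plan is to run the compactness–uniqueness scheme already used for Proposition \ref{prop5} and Theorem \ref{thm2}, but one order higher, at the $X_2$ level. I would split the proof into two independent ingredients: a relaxed (``weak'') observability inequality carrying a lower–order remainder,
\begin{equation*}
\Vert (\theta^0,u^0)\Vert_{X_2}^2 \le C\Big( \int_0^T \big[|\theta_{xx}(t,L)|^2 + |u_{xx}(t,L)|^2 + |\theta_x(t,L)|^2\big]\,dt + \Vert(\theta^0,u^0)\Vert_{X_1}^2\Big),
\tag{$\ast$}
\end{equation*}
valid for \emph{every} $L>0$, and a unique continuation statement showing that the observation kernel is trivial. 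Granting $(\ast)$, if the asserted inequality failed one would get a sequence $(\theta^0_n,u^0_n)$ with $\Vert(\theta^0_n,u^0_n)\Vert_{X_2}=1$ whose three boundary integrals tend to $0$; then $(\ast)$ forces $\Vert(\theta^0_n,u^0_n)\Vert_{X_1}$ to stay bounded away from $0$, so a subsequence converges weakly in $X_2$ and strongly in $X_1$ (compact embedding) to a \emph{nonzero} limit whose three traces vanish, by the continuity of the trace maps from Proposition \ref{prop3}. This limit would lie in the observation kernel, contradicting its triviality.

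To obtain $(\ast)$ I would exploit that $(p,q):=(\theta_x,u_x)$ solves the same system \eqref{A51} (differentiate the equations in $x$), and apply to it the Morawetz multiplier pair $(xq,xp)$ exactly as in the derivation of \eqref{A200}. The integrations by parts produce the wanted interior term $\tfrac32\int_0^T\!\int_0^L(\theta_{xx}^2+u_{xx}^2)\,dx\,dt$, a lower–order term $\int_0^T\!\int_0^L(\theta_x^2+u_x^2)$, and boundary contributions; the new feature, compared with \eqref{A51}, is that $(p,q)$ no longer satisfies homogeneous boundary conditions, so the endpoint terms must be controlled. At $x=L$ the surviving quantities are $\theta_x(L)$, $\theta_{xx}(L)$, $u_{xx}(L)$ together with the \emph{unmeasured} trace $\theta_{xxx}(L)$; the point is that differentiating $\theta(t,L)=u(t,L)=0$ in $t$ and using the equations yields $\theta_{xxx}(t,L)=-\theta_x(t,L)$, while the $u$–side endpoint terms drop out by the boundary condition $u_x(t,L)=0$, so every $x=L$ term reduces to the three measured traces. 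At $x=0$ the weight $x$ kills everything except a single residual $\int_0^T u_x(t,0)u_{xx}(t,0)\,dt$, which I would absorb by Young's inequality: by Proposition \ref{prop2} it is controlled at the $X_1$ level in $u_x(\cdot,0)$ and by Proposition \ref{prop3} at the $X_2$ level in $u_{xx}(\cdot,0)$, hence it is $\le \varepsilon\Vert(\theta^0,u^0)\Vert_{X_2}^2 + C_\varepsilon\Vert(\theta^0,u^0)\Vert_{X_1}^2$. Since the group property together with \eqref{A5} gives $\int_0^T\Vert(\theta,u)(t)\Vert_{X_2}^2\,dt \ge c\Vert(\theta^0,u^0)\Vert_{X_2}^2$, the $\varepsilon$–term is absorbed into the left side and $(\ast)$ follows, first for smooth data and then by density.

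For the unique continuation I would argue as in Step 1 of the proof of Theorem \ref{thm2}. Let $N_T$ be the space of solutions of \eqref{A51} whose three traces vanish on $(0,T)$. It is closed, and by $(\ast)$ the norms $\Vert\cdot\Vert_{X_2}$ and $\Vert\cdot\Vert_{X_1}$ are equivalent on it, so $N_T$ is finite–dimensional; the same semigroup argument as in Theorem \ref{thm2} shows it is invariant under $A$. If $N_T\neq\{(0,0)\}$, then $A$ has an eigenvector $(\theta,u)\in X_3\setminus\{(0,0)\}$ solving the spectral problem \eqref{B41}--\eqref{B44} and, in addition, $\theta'(L)=\theta''(L)=u''(L)=0$. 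Together with the conditions already built into $X_3$ this furnishes the full Cauchy data $\theta(L)=\theta'(L)=\theta''(L)=u(L)=u'(L)=u''(L)=0$; writing \eqref{B41}--\eqref{B42} as a first–order linear ODE system in $(\theta,\theta',\theta'',u,u',u'')$, uniqueness of the trivial solution forces $\theta=u=0$, a contradiction. Hence $N_T=\{(0,0)\}$ for every $L>0$, which is exactly why no critical length appears in this statement.

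I expect the multiplier bookkeeping of the second paragraph to be the only delicate point — in particular recognising that the unmeasured trace $\theta_{xxx}(\cdot,L)$ is tied to the measured $\theta_x(\cdot,L)$ through the equation, and that the leftover $x=0$ term is genuinely of lower order and can be absorbed. The unique continuation step, by contrast, is immediate once the full Cauchy data at $x=L$ is in hand, and it is precisely the availability of three traces at the same endpoint that makes the inequality hold for all $L$ rather than off an exceptional set.
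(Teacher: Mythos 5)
Your proof is correct, and its overall skeleton (a relaxed observability inequality plus compactness--uniqueness) matches the paper's, but the implementation differs at three points, each in a legitimate way. (i) Your multiplier computation with $(xu_x,x\theta_x)$ on the $x$-differentiated system is exactly the paper's identity \eqref{C14}, and the relation $\theta_{xxx}(t,L)=-\theta_x(t,L)$ (together with $u_{xxx}(t,L)=0$) that you extract by differentiating the Dirichlet conditions in $t$ is precisely what puts the $x=L$ bracket in the form written there. (ii) For the leftover $x=0$ term you absorb $\int_0^T u_x(t,0)u_{xx}(t,0)\,dt$ using Propositions \ref{prop2}--\ref{prop3} and Young's inequality, which produces the data-level remainder $\Vert(\theta^0,u^0)\Vert_{X_1}^2$ in your $(\ast)$; the paper instead derives a second multiplier identity, \eqref{C16} (multipliers $u_{xx},\theta_{xx}$), to control $\int_0^T[\theta_{xx}^2(t,0)+u_{xx}^2(t,0)]\,dt$, and its relaxed inequality \eqref{C18} carries the space--time remainder $\Vert(\theta,u)\Vert_Y^2$ with $Y=L^2(0,T,[H^{7/4}(0,L)]^2)\cap C([0,T],[H^1(0,L)]^2)$. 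Your choice pays off in the compactness step: you need only the compact embedding $X_2\subset X_1$ of the data, whereas the paper must invoke Aubin--Lions to upgrade weak $X_2$ convergence to strong convergence in $Y$. (iii) The unique continuation is where the routes genuinely diverge. The paper exploits that three vanishing traces at $x=L$ give \emph{full lateral Cauchy data} for the scalar functions $y=\theta+u$ and $z(t,x)=\theta(t,L-x)-u(t,L-x)$, which solve the linearized KdV equation, and kills them by Holmgren's theorem; no spectral or semigroup machinery is needed at all, which is the structural simplification that having three traces buys. You instead rerun the finite-dimensionality/semigroup reduction of Theorem \ref{thm2} (legitimate: your $(\ast)$ makes the $X_2$ and $X_1$ norms equivalent on $N_T$, so its unit ball is compact and Riesz applies), and then observe that the resulting eigenfunction has full ODE Cauchy data $\theta(L)=\theta'(L)=\theta''(L)=u(L)=u'(L)=u''(L)=0$, hence vanishes by Cauchy--Lipschitz --- the very argument the paper itself uses in Corollary \ref{cor2} in the case $\gamma'=0$. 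Your route is heavier (it carries the whole $N_T$ reduction) but more elementary in that it avoids Holmgren; both correctly explain why no critical length appears in this statement.
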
 
\begin{proof}
First, we introduce the space 
\[
Y:=L^2(0,T,  [ H^\frac{7}{4}(0,L)]^2)\cap C([0,T], [H^1(0,L)]^2)
\]
which is a Banach space when endowed with the norm
\[
\Vert (\theta , u)\Vert ^2_Y = \int_0^T  [\Vert \theta (t,\cdot ) \Vert ^2_{H^\frac{7}{4}} + \Vert u (t,\cdot ) \Vert ^2_{H^\frac{7}{4}  } ]dt 
+\sup _{0\le t \le T} [\Vert \theta (t,\cdot )  \Vert ^2_{H^1} + \Vert u(t,\cdot )\Vert ^2 _{H^1}].    
\]
Pick any  $(\theta ^0,u^0)\in X_4$ and let $(\theta, u)$ denote the solution of \eqref{A51}. Taking the derivative w.r.t. $x$ of each term in the 
two first equations of \eqref{A51} results in 
\ba
\theta _{tx} + u _{xx} + u _{xxxx}=0, && t\in (0,T),\  x\in (0,L), \label{C11}\\
u _{tx} + \theta _{xx} + \theta _{xxxx}=0,&& t\in (0,T), \  x\in (0,L). \label{C12}
\ea
Multiplying \eqref{C11} by $xu_x$, \eqref{C12} by $x\theta _x$, integrating by parts over $(0,T)\times (0,L)$ and adding the two equations, 
we arrive to 
\ba
&&\left[ \int_0^L x \theta _x u_x\, dx \right]_0^T -\frac{1}{2}\int_0^T\!\! \!\int_0^L [\theta _x^2+u_x^2]dxdt 
+ \frac{3}{2} \int_0^T\!\!\!\int_0^L [\theta _{xx} ^2 +u_{xx}^2] \, dxdt \nonumber\\
&&\qquad -\int_0^T \left[ \frac{x}{2}(\theta _x^2+u_x^2)  +\theta _x\theta _{xx} +
 u_xu_{xx}+ \frac{x}{2} (\theta _{xx}^2 + u_{xx}^2) \right] _0^Ldt=0. \label{C14}
\ea
Combining \eqref{C14}  with the forward/backward wellposedness in $X_2$ and Sobolev embedding, this yields
\begin{eqnarray}
\Vert (\theta ^0, u^0 )\Vert ^2 _{X_2} &\le & C\int_0^T \Vert (\theta , u)\Vert ^2_{X_2} dt \nonumber  \\
&\le & C \int_0^T \!\!\! \int_0^L[ \theta _{xx}^2 +  u_{xx}^2  ]\, dxdt \nonumber \\
&\le & C \left( \Vert (\theta , u)\Vert ^2_Y + \int _0^T [\theta ^2_{xx}(t,L)+ u ^2_{xx}(t,L)+ u ^2_{xx}(t,0)]dt \right) . \label{C15}
\end{eqnarray} 

Multiplying the two first equations in \eqref{A51} by $u_{xx}$ and $\theta _{xx}$ respectively, integrating by parts 
over $(0,T)\times (0,L)$ and adding the two equations, we obtain 
\be
\label{C16}
\frac{1}{2} \int_0^T [\theta _{xx}^2 + u_{xx}^2 ]_0^L dt 
-\left[ \int_0^L  \theta _x u_x\, dx \right]_0^T +\frac{1}{2} \int_0^T \theta _x^2(t,L)dt -\frac{1}{2}\int_0^T u_x^2(t,0)dt=0,  
\ee
and hence 
\be\label{C17}
\int_0^T [\theta _{xx}^2(t,0)+  u_{xx}^2(t,0) ]dt \le C \left( \Vert (\theta , u)\Vert ^2_Y + \int_0^T [ \theta _{xx}^2 (t,L) + u_{xx}^2  (t,L)]dt \right) .
\ee
Combined with \eqref{C15}, this gives 
\be
\Vert (\theta ^0, u^0 )\Vert ^2 _{X_2} 
\le C \left( \Vert (\theta , u)\Vert ^2_Y + \int _0^T [\theta ^2_{xx}(t,L)+ u ^2_{xx}(t,L)]dt \right) . \label{C18}
\ee
The result is also true for $(\theta ^0, u^0)\in X_2$, by density of $X_4$ in $X_2$. It remains to ``remove'' the term  $ \Vert (\theta , u)\Vert ^2_Y$ 
in \eqref{C18}. Assuming that \eqref{C1} is not true, we can find a sequence $(\theta ^0_n,u^0_n)_{n\in \N}$ such that 
\be
\label{C19} 
1= \Vert (\theta ^0_n,u^0_n)\Vert ^2 _{X_2} > n \int _0^T \big[   | \theta ^n_{xx} (t,L)|^2 
+ | u ^n_{xx}  ( t,L ) |^2  + | \theta ^n_x (t,L)|^2 \big] \, dt. 
\ee
Extracting a subsequence if needed, we can assume that $(\theta ^0_n, u^0_n)\to (\theta ^0,u^0)$ weakly in $X_2$ (hence strongly
in $X_1$). As the corresponding solution $(\theta _n,u_n)$ of \eqref{A51} is bounded in $C([0,T], X_2)$ and in $H^1(0,T, X_{-1} )$, and 
since the first embedding in $X_2\subset [H^\frac{7}{4}(0,L) \cap H^1_0(0,L) ]^2\subset X_1$ is compact, we infer
from Aubin-Lions' lemma (see e.g. \cite[Corollary 4]{simon}) that the sequence  $(\theta _n,u_n)_{n\ge 0}$ admits a subsequence, still denoted
$(\theta _n,u_n)_{n\ge 0}$, which is strongly convergent in $ C( [0,T], [H^\frac{7}{4} (0,L)\cap H^1_0(0,L) ]^2)$, 
hence in $Y$. It follows 
then from \eqref{C18} that the sequence $(\theta ^0_n,u^0_n)_{n\ge 0} $ is strongly convergent in $X_2$. Thus its limit $(\theta ^0,u^0)$ is such that $\Vert (\theta ^0, u^0)\Vert _{X_2}=1$ and the corresponding 
solution of \eqref{A51} satisfies 
\[
\theta _{xx}(t,L)=u_{xx}(t,L)=\theta _x(t,L)=0. 
\] 
We infer that the function $y(t,x):=\theta(t,x)+u(t,x)$ (resp. $y(t,x):=\theta (t,L-x)-u(t,L-x)$) solves the linearized KdV equation
$y_t+y_x+y_{xxx}=0$ and satisfies the boundary conditions $y(t,L)=y_x(t,L)=y_{xx}(t,L)=0$ (resp. $y(t,0)=y_x(t,0)=y_{xx}(t,0)=0$), and hence
it vanishes in $(0,T)\times (0,L)$ by Holmgren's theorem. We conclude that $\theta=u=0$ and this contradicts the fact that $\Vert (\theta ^0, u^0)\Vert _{X_2}=1$. 
\end{proof}

We are in a position to investigate the case 5. The following observability inequality improves those in Proposition \ref{prop7}.  
\begin{corollary}
\label{cor2}
For all $L>0$ and $T>0$, there is a constant $C=C(L,T)>0$ such that
\be
\label{C51} 
\Vert (\theta ^0,u^0)\Vert ^2 _{X_2} \le C \int _0^T \big[   | \theta _{xx} (t,L)|^2 
+ | u_{xx}  ( t,L ) |^2  \big] \, dt, \qquad \forall (\theta ^0, u^0)\in X_2. 
\ee
\end{corollary}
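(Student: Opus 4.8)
The plan is to argue by compactness and uniqueness, using Proposition \ref{prop7} as the preparatory estimate which still carries the superfluous trace $\theta_x(\cdot,L)$, and then to remove that trace. Suppose \eqref{C51} fails. Then there is a sequence $(\theta^0_n,u^0_n)$ in $X_2$ with $\Vert (\theta^0_n,u^0_n)\Vert_{X_2}=1$ and
\[
\int_0^T \big[\, |\theta^n_{xx}(t,L)|^2 + |u^n_{xx}(t,L)|^2 \,\big]\, dt < \frac{1}{n},
\]
so that $\theta^n_{xx}(\cdot,L)\to 0$ and $u^n_{xx}(\cdot,L)\to 0$ strongly in $L^2(0,T)$. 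Extracting a subsequence, I would take $(\theta^0_n,u^0_n)\rightharpoonup(\theta^0,u^0)$ weakly in $X_2$, hence strongly in $X_1$ by compactness, and let $(\theta,u)$ denote the limiting solution of \eqref{A51}.

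To upgrade this to strong convergence in $X_2$, and thereby secure a \emph{nontrivial} limit, I would apply Proposition \ref{prop7} to the differences $(\theta^0_n-\theta^0_m,u^0_n-u^0_m)$. The first two traces on the right-hand side are Cauchy by the display above. The key point for the third trace is that the map $(\theta^0,u^0)\in X_2\mapsto \theta_x(\cdot,L)\in H^{1/3}(0,T)$ is bounded by \eqref{A42}; since the sequence is bounded in $X_2$ and the embedding $H^{1/3}(0,T)\hookrightarrow L^2(0,T)$ is compact, a further subsequence of $\theta^n_x(\cdot,L)$ converges strongly in $L^2(0,T)$, hence is Cauchy there. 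Consequently the right-hand side of Proposition \ref{prop7} for the differences tends to $0$, so $(\theta^0_n,u^0_n)$ is Cauchy in $X_2$ and converges strongly to $(\theta^0,u^0)$ with $\Vert(\theta^0,u^0)\Vert_{X_2}=1$, the limit satisfying $\theta_{xx}(\cdot,L)=u_{xx}(\cdot,L)=0$ on $(0,T)$.

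It remains to show this is impossible. Following the reduction in the proof of Theorem \ref{thm2} (Claim 1), the space of such initial data is finite-dimensional and invariant under $A$, hence contains an eigenfunction: there exist $\lambda\in\C$ and $(\theta,u)\in X_3\setminus\{(0,0)\}$ solving \eqref{B41}--\eqref{B44} together with $\theta''(L)=u''(L)=0$. In the notation of that proof these conditions read $\gamma=\gamma'=0$, so that
\[
f(\xi)=\frac{i(\alpha+\beta i\xi)}{\xi^3-\xi+p}, \qquad g(-\xi)=\frac{i(\alpha'-\beta i\xi)}{\xi^3-\xi+p}
\]
must both be entire. Each is a rational function, and an entire rational function is a polynomial; since the cubic denominator cannot divide a numerator of degree at most one unless that numerator vanishes, we get $\alpha=\beta=\alpha'=0$. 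Thus all of $\alpha,\beta,\gamma,\alpha',\gamma'$ vanish, whence $f\equiv 0$ and $g\equiv 0$, so $\theta=u=0$ — contradicting $\Vert(\theta^0,u^0)\Vert_{X_2}=1$.

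The main obstacle, and precisely the point where the improvement over Proposition \ref{prop7} is achieved, is the removal of the trace $\theta_x(\cdot,L)$; this hinges on the gain of $H^{1/3}$-regularity of that trace (Proposition \ref{prop3}) combined with the compactness of $H^{1/3}(0,T)\hookrightarrow L^2(0,T)$. I note that the resulting spectral obstruction is independent of $L$, consistent with the empty set of critical lengths reported for Case 5 in Table 1.
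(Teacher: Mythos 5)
Your compactness step is fine (and in fact a clean alternative to the paper's route: the paper gets strong $X_2$-convergence via the estimate \eqref{C18} and Aubin--Lions in the auxiliary space $Y$, whereas you apply Proposition \ref{prop7} to differences and use the compactness of $H^{\frac{1}{3}}(0,T)\hookrightarrow L^2(0,T)$ for the trace $\theta_x(\cdot,L)$; both work). The fatal problem is in the spectral reduction. Your limiting data only satisfies the vanishing of the \emph{observed} traces, $\theta_{xx}(\cdot,L)=u_{xx}(\cdot,L)=0$, on top of the boundary conditions built into $D(A)=X_3$. Hence the eigenfunction produced by the finite-dimensionality argument solves the spectral problem \eqref{C61}--\eqref{C64}: $\theta(0)=\theta(L)=\theta'(0)=\theta''(L)=0$ and $u(0)=u(L)=u'(L)=u''(L)=0$. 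It does \emph{not} satisfy \eqref{B41}--\eqref{B44}, because the condition $\theta'(L)=0$ in \eqref{B43} comes from the vanishing of the trace $\theta_x(\cdot,L)$, which is precisely the trace you worked to remove; nothing in your construction forces it to vanish in the limit. You have therefore smuggled in a ninth boundary condition that is not available.

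This extra condition is exactly what makes your endgame trivial and wrong. In the notation of the proof of Theorem \ref{thm2}, dropping $\theta'(L)=0$ means that when you extend $(\theta,u)$ by zero and take Fourier transforms, the second distributional equation acquires the term $-\theta'(L)\delta_L'$, so the numerators of $f(\xi)$ and $g(-\xi)$ contain $\gamma'(i\xi)e^{\mp iL\xi}$ with $\gamma':=-\theta'(L)$, which need not vanish. The functions are then no longer rational, and the ``entire rational function is a polynomial'' argument collapses. Ruling out nontrivial solutions of \eqref{C61}--\eqref{C64} for \emph{every} $L>0$ is in fact the hard core of the paper's proof (Claim 3): one must split according to whether $Q(\xi)=\xi^3-\xi+p$ has simple or double roots, exploit that the roots sum to zero against conditions of the form $\sin(L\mu_j)=\pm 1$, and in the double-root case derive the relations \eqref{F11}--\eqref{F13} and exclude the remaining possibilities by an explicit (numerically verified) trigonometric computation. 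None of this can be bypassed; your one-line conclusion $\alpha=\beta=\alpha'=0$ only holds because of the unjustified assumption $\gamma'=0$.
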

\begin{proof}

We proceed  in two steps as in the proof of Theorem \ref{thm2}.  \\

\noindent
{\sc Step 1. Reduction to a spectral problem.}\\
If \eqref{C51} is false, then one can pick a sequence $(\theta ^0_n, u^0_n)_{n\in \N}$ such that
\be
\label{C52} 
1= \Vert (\theta ^0_n,u^0_n)\Vert ^2 _{X_2} > n  \int _0^T \big[   | \theta _{n,xx} (t,L)|^2 
+ | u_{n,xx}  ( t,L ) |^2  \big] \, dt. 
\ee
Extracting a subsequence if needed, we can assume that $(\theta ^0_n, u^0_n)\to (\theta ^0, u^0)$ weakly in $X_2$. 
Denote by $(\theta _n, u_n)$ (resp. $(\theta , u)$) the solution of \eqref{A51} with initial data $(\theta ^0_n , u^0_n)$ (resp. 
$(\theta ^0, u^0)$).  
Extracting again a subsequence, we can assume that $(\theta _n, u_n)\to (\theta, u)$ in $Y$. By \eqref{C18} and \eqref{C52}, we
have that $(\theta ^0_n, u^0_n)\to (\theta ^0, u^0)$ strongly in $X_2$. Thus $\Vert (\theta ^0, u^0)\Vert_{X_2}=1$ and 
$\theta _{xx}(\cdot , L)=u_{xx}(\cdot , L)=0$ a.e. in $(0,L)$. The same argument as the one in Step 1 of the proof of Theorem \ref{thm2} shows
that there exist
 $\lambda \in \C$  and $(\theta ,u)\in X_3\setminus \{ (0,0) \}$ (superscript $0$ dropped for simplicity) 
 which solve  the spectral problem:
\ba
-u'-u'''=\lambda \theta,&&  x\in (0,L), \label{C61}\\
-\theta ' -\theta '''=\lambda u,&& x\in (0,L), \label{C62}\\
\theta (0)=\theta (L)=\theta '(0)=\theta ''(L)=0,\label{C63}   \\
u(0)=u(L)=u'(L)=u''(L)=0.&& \label{C64}
\ea
We show in the next step that system \eqref{C61}-\eqref{C64} has no nontrivial solution for any $L>0$. \\

\noindent
{\sc Step 2. Study of the spectral problem.} \\

\noindent
{\sc Claim 3. } Let $L>0$. If $\lambda\in \C$ and $(\theta, u)\in X_3$ satisfy \eqref{C61}-\eqref{C64}, then $\theta =u=0$. \\[3mm]
Let us prove Claim 3.
Pick $(\lambda , \theta ,u)$ as in Claim 3, and extend $\theta$ and $u$ to $\R$ by setting  $\theta(x)=u(x)=0$ for $x\not\in [0,L]$.  Then we have 
\begin{eqnarray*}
\lambda \theta +u'+u'''  &=& u'(0)\delta '_0 + u''(0) \delta _0, \\
\lambda u + \theta '+ \theta '''  &=&-\theta '(L) \delta _L'  + \theta ''(0) \delta _0, 
\end{eqnarray*}
where $\delta_\zeta$ denotes the Dirac measure at $x=\zeta$, and the derivatives $u'(0),u''(0),\theta '(L),\theta ''(0)$ are those of the 
functions $u$ and $\theta$ when restricted to $[0,L]$.  Taking the Fourier transform of each term in the above system, we obtain
\begin{eqnarray*}
\lambda \hat \theta (\xi )  + i\xi \hat u (\xi )  + (i\xi )^3 \hat u (\xi)  &=& u'(0) i\xi + u''(0), \\
\lambda \hat u (\xi ) + i\xi \hat \theta  (\xi)  + (i\xi )^3 \hat \theta (\xi )  &=& -  \theta '(L) (i\xi)  e^{-iL\xi } +  \theta ''(0).
\end{eqnarray*}   
Setting 
$\lambda = -ip$, $f(\xi) :=\hat \theta (\xi) +\hat u(\xi)$,
and $g(\xi ) := \hat \theta (\xi) -\hat u(\xi)$, we arrive to  
\begin{eqnarray*}
f(\xi ) &=& \frac
{i}{\xi ^3-\xi +p} ( \alpha +\beta i\xi +\gamma ' (i\xi) e^{-iL\xi} )\\
g(-\xi ) &=& \frac{i}{\xi ^3 -\xi +p} (\alpha ' -\beta i\xi + \gamma ' (i\xi)  e^{iL\xi} )
\end{eqnarray*}
where 
$\alpha := u''(0)+\theta ''(0)$,
$\alpha ' := u''(0)-\theta ''(0)$,  
$\beta := u'(0)$, and
$\gamma ' :=  -\theta '(L )$.   It follows that 
\begin{eqnarray}
f(\xi )+ g( - \xi ) &=& \frac{i}{\xi ^3-\xi +p} ( \alpha + \alpha '  +2 \gamma ' (i\xi) \cos(L\xi ) ) \label{ABC1}\\
f(\xi ) - g( - \xi )  &=& \frac{i}{\xi ^3-\xi +p} ( \alpha - \alpha '  +  2\beta i\xi  -2i \gamma ' (i\xi) \sin (L\xi ) ). \label{ABC2}
\end{eqnarray}
Let $Q(\xi )=\xi ^3 -\xi +p$ and let $\mu_0,\mu_1,\mu_2$ be the roots of $Q$. 

\noindent
1. Assume that the three roots $\mu_0,\mu_1,\mu_1$ are simple. If $\xi \in \{ \mu _0, \mu _1, \mu _2\} $, then
$\xi$ must be a root of the numerators of $f(\xi )+g(-\xi )$ and $f(\xi ) -g(-\xi)$, so that 
\begin{eqnarray}
2\gamma ' i \xi \cos (L\xi) &=& -\alpha -\alpha ', \label{C71}\\
2\gamma ' i  \xi \sin (L\xi) &=& \frac{\alpha - \alpha '}{i} + 2\beta \xi. \label{C72} 
\end{eqnarray}
Taking the square in both equations and summing, we obtain 
\[
-4{\gamma ' }^2 \xi ^2= (\alpha +\alpha ')^2 - (\alpha -\alpha ')^2 + 4\beta ^2 \xi ^2 +\frac{4\beta\xi}{i}(\alpha -\alpha ')
\]
and hence
\be
\label{C80}
(\beta ^2 +\gamma '^2) \xi ^2 +\frac{\beta\xi}{i}(\alpha -\alpha ') + \alpha \alpha '=0. 
\ee
As the three numbers $\mu_0,\mu_1,\mu_2$ satisfy \eqref{C80},
we infer that all the coefficients in \eqref{C80} are null; that is
\[
\beta^2+{\gamma '}^2 = \beta (\alpha -\alpha ' )=\alpha \alpha ' =0. 
\]

If $\gamma '=0$, then the solution $(\theta ,u)$ of the system  \eqref{C61}-\eqref{C62} of order 3 satisfies 
\eqref{C63}-\eqref{C64} together with $\theta ' (L)=0$ and is therefore null, by Cauchy-Lipschitz theorem (Cauchy data taken at $x=L$). 

If $\gamma '\ne 0$, then $\beta =\pm i\gamma '\ne 0$ and thus $\alpha =\alpha '=0$. Assume that $\beta =i\gamma '$. 
From \eqref{C71}-\eqref{C72}, we infer that 
\[\xi\cos(L\xi)=0, \quad  \xi (\sin (L\xi) -1)  =0 \quad \textrm{ for }  \xi \in \{\mu_0, \mu_1,\mu_2\} .\]

If $0\in \{ \mu _0, \mu _1, \mu_2 \}$, say $\mu  _0=0$, then $\sin (L\mu_j)=1$ for $j=1,2$, and hence 
\[
L \mu _j=\frac{\pi}{2} + 2k_j\pi, \quad k_j\in \Z .
\]
Therefore $ L ( \mu_0+\mu_1+\mu_2 ) = \pi(1+ 2(k_1+k_2))$, but this contradicts the fact that the sum of the roots of $Q$ is $0$.  

If $0\not\in  \{ \mu _0, \mu _1, \mu_2 \}$, then for $j=1,2,3$ we have $\sin (L\mu _j)=1$ and hence
\[
L \mu _j=\frac{\pi}{2} + 2k_j\pi, \quad k_j\in \Z .
\]
Then $ L ( \mu_0+\mu_1+\mu_2) = \pi(\frac{3}{2}+ 2(k_1+k_2+ k_3))\ne 0$, a contradiction. Similarly, we obtain also a contradiction
when $\beta = -i\gamma '$.  

\noindent
2. Assume now that $Q$ has a double root $\mu_0=\mu_1$ and a simple root $\mu_2\ne \mu_0$.  Then 
$(\mu_0,\mu_2)=\pm (\frac{1}{\sqrt{3}}, -\frac{2}{\sqrt{3}})$. We shall consider the case  
$(\mu_0,\mu_2)= (\frac{1}{\sqrt{3}}, -\frac{2}{\sqrt{3}})$, the case $(\mu_0,\mu_2)= (-\frac{1}{\sqrt{3}}, \frac{2}{\sqrt{3}})$
being similar. As $\mu _0$ is a double root of $Q$, it should be a root of the numerators of the functions $f(\xi ) +g(- \xi)$ and 
$f(\xi ) -g( -\xi )$ (see  
\eqref{ABC1} and \eqref{ABC2}) and of their first derivatives, so that 
\ba
2\gamma'  \frac{i}{\sqrt{3}} \cos (\frac{L}{\sqrt{3}} ) &=& -(\alpha + \alpha '), \label{F1}\\
2\gamma ' \frac{i}{\sqrt{3}} \sin (\frac{L}{\sqrt{3}}) &=& \frac{\alpha -\alpha '}{i} 
+2\frac{\beta}{\sqrt{3}} ,\label{F2}\\
2\gamma 'i \cos(\frac{L}{\sqrt{3}}) +2\gamma ' \frac{i}{\sqrt{3}} (-L\sin \frac{L}{\sqrt{3}}) &=& 0, \label{F3}\\ 
2\beta i + 2\gamma' \sin (\frac{L}{\sqrt{3}}) + 2\gamma ' \frac{L}{\sqrt{3}} \cos (\frac{L}{\sqrt{3}}) &=&0 .\label{F4}
\ea 
On the other hand, the number $\mu _2$ is a simple root of $Q$, and hence it should be a root of the numerators of the functions $f(\xi ) +g(- \xi)$ and  $f(\xi ) -g( -\xi )$, so that 
\ba
-4\gamma ' \frac{i}{\sqrt{3}} \cos (\frac{2L}{\sqrt{3}}) &=& -(\alpha + \alpha '), \label{F5}\\
-4\gamma ' \frac{i}{\sqrt{3}} \sin (-2 \frac{L}{\sqrt{3}}) &=& \frac{\alpha -\alpha '}{i} 
-4\frac{\beta}{\sqrt{3}} .\label{F6}
\ea
Note that \eqref{F3}-\eqref{F4} can be rewritten as 
\[
\left( 
\begin{array}{cc}
\cos (\frac{L}{\sqrt{3}})  & - \sin  (\frac{L}{\sqrt{3}})  \\[3mm]
\sin (\frac{L}{\sqrt{3}}) & \cos (\frac{L}{\sqrt{3}})  
\end{array}
\right)  \ 
\left(
\begin{array}{c}
\gamma' \\
\gamma' \frac{L}{ \sqrt{3} } 
\end{array}
\right) 
= \left(
\begin{array}{c}0 \\
 -\beta i 
 \end{array}  
 \right) \cdot
\]
This yields
\ba
\gamma' &=& - i\beta \sin( \frac{L}{\sqrt{3} }  ), \label{F11}\\
\gamma' \frac{L}{\sqrt{3}} &=& -i \beta  \cos (\frac{L}{\sqrt{3} } )  \cdot \label{F12}
\ea
If $\gamma '=0$, then we infer as above that 
$(\theta ^0, u^0)=(0,0)$. \\
Assume that $\gamma '\ne 0$, so that $\beta \ne 0$. Without loss of generality, we can assume 
by linearity that $\beta =1$. 
It follows from \eqref{F11} and \eqref{F12} that 
\be
\label{F13}
\gamma ' = \pm \frac{i}{\sqrt{1+\frac{L^2}{3}}} \cdot
\ee
Let $X:=\cos (L/\sqrt{3})$. Then  we infer from \eqref{F12} and \eqref{F13} that $L^2/(3X^2)=1+(L^2/3)$, so that 
\[
X=\cos(\frac{L}{\sqrt{3}}) = \cos (\frac{1}{\sqrt{X^{-2}-1}} )\cdot
\]
On the other hand, from \eqref{F1} and \eqref{F5}, we obtain that $\cos(L/\sqrt{3})=-2\cos(2L/\sqrt{3})$, and hence $X$ satisfies
$4X^2+X-2=0$.  We denote the two roots of this equation by
\[
X_+:=\frac{-1+\sqrt{33}}{8}\approx 0.5931, \qquad X_-:= \frac{-1-\sqrt{33}}{8}\approx -0.8431.
\]  
Then we obtain by using a numerical computation that 
\[
\cos (\frac{1}{\sqrt{X_+^{-2}-1}} )\approx 0.7408\ne 0.5931, \qquad \cos (\frac{1}{\sqrt{X_-^{-2}-1}} )\approx 0.0032\ne -0.8431.
\] 
This proves that the assumption $\gamma '\ne 0$ leads to a contradiction. 

\noindent
3. The polynomial function $Q$ cannot have a triple root $\mu_0=\mu_1=\mu_2$, otherwise $0=Q''(\mu_0)=6\mu _0$ yields
$\mu _0=0$, $p=0$, and $Q(\xi ) =\xi (\xi -1)(\xi +1)$, a contradiction. 
\end{proof}

We now turn our attention to cases 2 and 3. We need the following estimate, whose (long) proof is postponed in an appendix. 

\begin{theorem}
\label{thm100}
For all $L\in (0,+\infty) \setminus 2\pi {\mathbb Z} $ and all $T>0$, there is a constant $C=C(L,T)>0$ such that for all $(\theta ^0, u^0)\in X_2$, we have
\ba
\label{GG1} 
\Vert (\theta ^0,u^0)\Vert ^2 _{X_2} \le C \left( \int _0^T  | \theta _{xx} (t,L)|^2  \, dt + \Vert (\theta ^0, u^0)\Vert _{X_0}^2\right), \\
\label{GG2}
\Vert (\theta ^0,u^0)\Vert ^2 _{X_2} \le C \left( \int _0^T  | \theta _{xx} (t,0)|^2  \, dt + \Vert (\theta ^0, u^0)\Vert _{X_0}^2\right). 
\ea
\end{theorem}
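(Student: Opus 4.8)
The plan is to diagonalize the free evolution $e^{tA}$ and reduce \eqref{GG1}--\eqref{GG2} to a question about nonharmonic Fourier series in the time variable; I would begin with the spectral reduction announced in the Introduction. Let $R$ denote the reflection $(Ry)(x):=y(L-x)$ and set $P:=-\partial_x-\partial_x^3$, so that $B=RP$ with $D(B)=\{y\in H^3(0,L)\cap H^1_0(0,L);\ y_x(L)=0\}$. Since $\partial_x R=-R\partial_x$ we have $PR=-RP$, and an integration by parts (whose only surviving boundary terms are $y'(L)\overline{w'(0)}-y'(0)\overline{w'(L)}$, killed by the condition $y_x(L)=w_x(L)=0$ in $D(B)$) shows that $B$ is self-adjoint on $L^2(0,L)$; as a third-order differential operator it has compact resolvent. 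Thus $L^2(0,L)$ carries an orthonormal basis $(\phi_n)$ of real eigenfunctions, $B\phi_n=\nu_n\phi_n$ with $\nu_n\in\R$. Using $\phi_n'(L)=0$ one checks that, for $\kappa\in\{i,-i\}$, the pair $(R\phi_n,\kappa\phi_n)$ lies in $X_3=D(A)$ and satisfies $A(R\phi_n,\kappa\phi_n)=\kappa\nu_n\,(R\phi_n,\kappa\phi_n)$. Normalising, this produces an orthonormal basis $(\Phi_n^\pm)$ of $X_0$ made of eigenfunctions of $A$, with $A\Phi_n^\pm=\pm i\nu_n\Phi_n^\pm$ and with common first component $R\phi_n/\sqrt2$. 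Since $X_2=[X_0,X_3]_{2/3}$ and $\Vert\cdot\Vert_{X_3}\sim\Vert\cdot\Vert_{X_0}+\Vert A\cdot\Vert_{X_0}$, the norm of $(\theta^0,u^0)=\sum_n(c_n^+\Phi_n^++c_n^-\Phi_n^-)$ in $X_2$ is equivalent to $\big(\sum_n(1+|\nu_n|^{4/3})(|c_n^+|^2+|c_n^-|^2)\big)^{1/2}$.

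Second, I would describe the spectrum of $B$. Analysing the local sixth-order operator $B^2=-P^2$, whose exponential solutions come from the roots of $r^3+r=\pm i\nu$ (the cubic underlying the spectral problem \eqref{B41}--\eqref{B44}), one gets $|\nu_n|\to\infty$ with consecutive gaps tending to $+\infty$, the symbol being of order three; in particular each bounded interval contains only finitely many $\nu_n$, and $+\nu_n$ and $-\nu_n$ move apart as $|n|\to\infty$. Solving $By=0$ explicitly gives a nontrivial kernel (spanned by $1-\cos x$) exactly when $\cos L=1$, so that $0\in\sigma(B)$ if and only if $L\in2\pi\Z$. Excluding this set---which is harmless, since $2\pi\N^*\subset\cN$---ensures that $0$ is not a frequency and that, away from finitely many indices, $\{\pm\nu_n\}$ is uniformly separated with gaps tending to infinity.

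Third is the observation step. Expanding the solution and taking the trace gives
\[
\theta_{xx}(t,L)=\frac{1}{\sqrt2}\sum_n\phi_n''(0)\,\big(c_n^+e^{i\nu_nt}+c_n^-e^{-i\nu_nt}\big),
\]
a nonharmonic Fourier series with frequencies $\{\pm\nu_n\}$; for \eqref{GG2} the same computation produces $\phi_n''(L)$ instead of $\phi_n''(0)$. Because the gaps tend to infinity, an Ingham-type inequality holds for every $T>0$ and yields, for $|n|\ge N_0$ with $N_0$ large,
\[
\int_0^T|\theta_{xx}(t,L)|^2\,dt\ \ge\ c\sum_{|n|\ge N_0}\big(|c_n^+|^2+|c_n^-|^2\big)\,|\phi_n''(0)|^2.
\]
Here the distinctness of $+\nu_n$ and $-\nu_n$ for large $|n|$ is precisely what lets a single trace recover both $c_n^+$ and $c_n^-$. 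It then remains to prove the nondegeneracy $|\phi_n''(0)|^2\ge c\,|\nu_n|^{4/3}$ for $|n|$ large, which matches exactly the $X_2$-weight. Granting it, the high-frequency part of $\Vert(\theta^0,u^0)\Vert_{X_2}^2$ is dominated by $\int_0^T|\theta_{xx}(t,L)|^2\,dt$, while the finitely many indices $|n|<N_0$ span a fixed finite-dimensional space on which $\Vert\cdot\Vert_{X_2}$ and $\Vert\cdot\Vert_{X_0}$ are equivalent, and are therefore absorbed into $\Vert(\theta^0,u^0)\Vert_{X_0}^2$. Adding both contributions gives \eqref{GG1}; and \eqref{GG2} follows verbatim with $\phi_n''(L)$ in place of $\phi_n''(0)$.

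I expect the main obstacle to be exactly the asymptotic lower bound $|\phi_n''(0)|\ge c\,|\nu_n|^{2/3}$ (and its analogue at $x=L$) on the second-order traces of the eigenfunctions. This demands sharp endpoint asymptotics for the eigenfunctions of the \emph{nonlocal} operator $B=RP$, which is not a classical Sturm--Liouville operator, and it is precisely where the hypothesis $L\notin2\pi\Z$ enters, preventing the observation point from sitting asymptotically on the zeros of $\phi_n''$. Once these asymptotics and the lower bound are established, the remaining pieces---self-adjointness of $B$, the transfer of the eigenbasis to $A$, and the Ingham summation---are routine.
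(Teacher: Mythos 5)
Your setup coincides with the paper's: the same reflected operator $B$ (the paper writes $(By)(x)=-y'''(L-x)-y'(L-x)$ with the same domain), the same self-adjointness argument, the same transfer of its orthonormal eigenbasis to eigenfunctions of $A$ with eigenvalues $\pm i\nu_n$, and the same plan of expanding $\theta_{xx}(t,L)$ as a nonharmonic Fourier series and applying Ingham. However, your proposal defers exactly the two points that constitute the real work, so as written it is an outline rather than a proof. First, the Ingham gap condition is not justified. The frequency set is $\{\nu_n\}\cup\{-\nu_m\}$, and the danger is not that consecutive $\nu_n$ cluster (a cubic symbol does prevent that), but that a positive eigenvalue $\nu_n$ nearly coincides with $-\nu_m$ for some negative eigenvalue $\nu_m$; an asymptotically symmetric spectrum is perfectly compatible with ``order three'' and with $0\notin\sigma(B)$, and it would make the two exponentials $e^{i\nu_n t}$ and $e^{-i\nu_m t}$ indistinguishable, so that a single scalar trace could not recover $c_n^+$ and $c_m^-$ separately. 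The paper excludes this by proving the refined asymptotics (via Rouch\'e's theorem applied to the characteristic determinant) $\nu_n\simeq L^{-3}(\tfrac{\pi}{6}+2\pi n)^3$ as $n\to+\infty$ and $\nu_n\simeq -L^{-3}(\tfrac{7\pi}{6}+2\pi|n|)^3$ as $n\to-\infty$: the phase offset $\tfrac{7\pi}{6}-\tfrac{\pi}{6}=\pi$ is not a multiple of $2\pi$, which forces $|\nu_n+\nu_m|\gtrsim n^2$. Your ``the symbol being of order three'' and ``$+\nu_n$ and $-\nu_n$ move apart'' do not address this cross-branch separation.

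Second, you explicitly leave unproven the lower bound $|\phi_n''(0)|\ge c\,|\nu_n|^{2/3}$ (and its analogue at $x=L$), calling it the main obstacle; since your closing argument rests entirely on it (together with a spectral characterization of the $X_2$-norm, also only asserted), the proof is incomplete precisely where it is hardest. It is worth noting that the paper's closing step avoids this lower bound altogether: it starts from the two-trace observability inequality of Corollary \ref{cor2}, valid for \emph{all} $L>0$,
\[
\Vert (\theta ^0,u^0)\Vert ^2 _{X_2} \le C \int _0^T \big[ | \theta _{xx} (t,L)|^2 + | u_{xx} (t,L)|^2 \big]\, dt,
\]
and then only needs the \emph{ratio} statement $v_n''(0)\sim K_\pm\, v_n''(L)$ with $K_\pm\ne 0$ (Claim A.5 there), obtained from the same exponential asymptotics of the eigenfunctions, to trade the trace $u_{xx}(\cdot,L)$ (which involves $v_n''(L)$) for the trace $\theta_{xx}(\cdot,L)$ (which involves $v_n''(0)$) up to low frequencies absorbed in $\Vert(\theta^0,u^0)\Vert_{X_0}^2$. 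If you want to complete your route instead, you would need to carry out the same Rouch\'e-type analysis of the roots of $r^3+r=i\nu$ anyway, both for the eigenvalue asymptotics and to control the normalization constant $a_1$ in $v_n$ well enough to get the absolute lower bound on $|\phi_n''(0)|$; at that point the paper's bootstrap from Corollary \ref{cor2} is strictly less work.
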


We first consider case 2. 
\begin{corollary}
\label{cor10}
For all $L\in (0, + \infty ) \setminus  (\cN\cup \cR )$ and all $T>0$, there exists a constant $C=C(L,T)>0$ such that 
\be
\label{GG3} 
\Vert (\theta ^0,u^0)\Vert ^2 _{X_2} \le C \int _0^T  | \theta _{xx} (t,L)|^2  \, dt  .
\ee 
\end{corollary}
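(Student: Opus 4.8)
The plan is to upgrade the weak observability estimate \eqref{GG1} of Theorem \ref{thm100} to the genuine observability inequality \eqref{GG3} by a compactness--uniqueness argument, whose analytic core is a spectral problem solved by the Paley--Wiener method. First observe that Theorem \ref{thm100} is applicable under the hypothesis of the corollary: taking $k=l=n$ shows that $2\pi n\in\cN$ for every $n\in\N^*$, so $2\pi\N^*\subset\cN$ and $L\notin\cN\cup\cR$ forces $L\notin 2\pi\Z$. Arguing by contradiction, suppose \eqref{GG3} fails; then there is a sequence $(\theta^0_n,u^0_n)$ with $\Vert(\theta^0_n,u^0_n)\Vert_{X_2}=1$ and $\int_0^T|\theta_{n,xx}(t,L)|^2\,dt\to0$. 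Extracting a subsequence, $(\theta^0_n,u^0_n)$ converges weakly in $X_2$ and strongly in $X_0$ to some $(\theta^0,u^0)$, since the embedding $X_2\hookrightarrow X_0$ is compact. Applying \eqref{GG1} to the differences $(\theta^0_n-\theta^0_m,u^0_n-u^0_m)$, the measurement terms tend to $0$ while the $X_0$-norms are Cauchy, so $(\theta^0_n,u^0_n)$ is Cauchy in $X_2$; its limit satisfies $\Vert(\theta^0,u^0)\Vert_{X_2}=1$ and $\theta_{xx}(\cdot,L)=0$. Reasoning exactly as in Step 1 of the proof of Theorem \ref{thm2} (the space of such data is finite-dimensional and invariant under $A$, hence contains an eigenvector), one produces $\lambda\in\C$ and a nontrivial $(\theta,u)\in X_3$ solving \eqref{B41}--\eqref{B42} with the boundary conditions of $X_3$ together with the extra condition $\theta''(L)=0$. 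It thus suffices to prove that this spectral problem has only the trivial solution whenever $L\notin\cN\cup\cR$.

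For the spectral problem I would extend $\theta,u$ by $0$ outside $[0,L]$ and take Fourier transforms, as in Theorem \ref{thm2}. The essential new feature is that the extra condition is now $\theta''(L)=0$ (and not $\theta'(L)=0$ as in Theorem \ref{thm2}), so the source distributions retain a $\delta_L'$ term proportional to $\theta'(L)$. Writing $\lambda=-ip$, this produces
\[
f(\xi)=\frac{i}{\xi^3-\xi+p}\bigl(\alpha+\beta i\xi+(\gamma_0+\gamma_1 i\xi)e^{-iL\xi}\bigr),\qquad
g(-\xi)=\frac{i}{\xi^3-\xi+p}\bigl(\alpha'-\beta i\xi+(\gamma_0+\gamma_1 i\xi)e^{iL\xi}\bigr),
\]
where $f=\hat\theta+\hat u$, $g=\hat\theta-\hat u$, $\alpha=u''(0)+\theta''(0)$, $\alpha'=u''(0)-\theta''(0)$, $\beta=u'(0)$, $\gamma_0=-u''(L)$, $\gamma_1=-\theta'(L)$; note that the \emph{same} linear factor $\gamma_0+\gamma_1 i\xi$ occurs in both numerators. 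By Paley--Wiener, $f$ and $g(-\xi)$ must be entire, so each root $\mu_k$ of $Q(\xi)=\xi^3-\xi+p$ annihilates both numerators; eliminating $e^{\mp iL\mu_k}$ by multiplying the two relations shows that every root satisfies the quadratic
\[
(\beta^2+\gamma_1^2)\mu^2+\bigl[\beta(\alpha'-\alpha)-2\gamma_0\gamma_1\bigr]i\mu+(\alpha\alpha'-\gamma_0^2)=0.
\]

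When $Q$ has three distinct roots, this quadratic must vanish identically, in particular $\beta^2+\gamma_1^2=0$. If $\beta=\gamma_1=0$, the numerator of $f$ is of the form $\alpha+\gamma_0 e^{-iL\xi}$, so $f$ is entire exactly as in \eqref{B7}, and Lemma \ref{lemR1} forces $L\in\cN$. If $\beta\neq0$, then $\gamma_1=\pm i\beta$; normalizing $\beta=1$, $\gamma_1=i$, one finds $\alpha=-i\gamma_0$, $\alpha'=i\gamma_0$, and the annihilation relations factor as $(\mu_k-\gamma_0)(e^{-iL\mu_k}-i)=0$ (the choice $\gamma_1=-i$ being symmetric). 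Since the roots sum to $0$, they cannot all satisfy $e^{-iL\mu_k}=i$ (summing $L\mu_k=-\pi(\tfrac12+2n_k)$ would give $0=-\pi(\tfrac32+2N)$, impossible), so exactly one root equals $\gamma_0$ while the other two, $\mu_0$ and $\mu_1$, satisfy $L\mu_0=-\pi(\tfrac12+2k)$ and $L\mu_1=-\pi(\tfrac12+2l)$ with $k\neq l$ in $\Z$; imposing the remaining relation $\sum_{i<j}\mu_i\mu_j=-1$, equivalently $\mu_0^2+\mu_0\mu_1+\mu_1^2=1$, then yields $L^2=\pi^2\bigl[(\tfrac12+2k)^2+(\tfrac12+2l)^2+(\tfrac12+2k)(\tfrac12+2l)\bigr]$, that is $L\in\cR$. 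Hence for $L\notin\cN\cup\cR$ the three-simple-roots case forces $(\theta,u)=0$.

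It remains to rule out the degenerate configurations of $Q$, as in Theorem \ref{thm2} and Corollary \ref{cor2}. A triple root would give $\mu_0=0$, $p=0$, hence the three simple roots of $\xi(\xi-1)(\xi+1)$, a contradiction. For a double root ($p=\pm\tfrac{2}{3\sqrt3}$, $\mu_0=\mu_1=\pm\tfrac1{\sqrt3}$, $\mu_2=-2\mu_0$) one writes the six conditions expressing that $\mu_0$ is a double and $\mu_2$ a simple zero of the two numerators; combining them I expect to obtain, after setting $a=e^{-iL\mu_0}$ and $P_0=\gamma_0+\gamma_1 i\mu_0$, the identity $3L^2P_0^2=0$, which forces $P_0=0$ and then, through the incompatible constraints $a^2=-1$ and $a^3=1$, collapses all coefficients to zero. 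The main obstacle is the $\beta\neq0$ branch of the three-simple-roots case: it is genuinely new compared with \cite{R1}, where the analogous factor is constant, and it is precisely this branch, together with the careful bookkeeping of the $\delta_L'$ contribution, that produces the extra family $\cR$ of critical lengths.
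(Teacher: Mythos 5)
Your overall strategy is the same as the paper's: weak observability \eqref{GG1} plus compactness--uniqueness to reduce to the spectral problem \eqref{C101}--\eqref{C104}, then Paley--Wiener and a case analysis on the roots of $Q(\xi)=\xi^3-\xi+p$. Step 1 is correct (and your remark that $2\pi\N^*\subset\cN$, so Theorem \ref{thm100} is indeed applicable, makes explicit a point the paper leaves implicit). Your three-simple-roots analysis is also correct and essentially identical to the paper's: multiplying the two annihilation relations is the same as squaring and summing \eqref{C403}--\eqref{C404}; the branch $\beta=\gamma_1=0$ gives $\cN$ via Lemma \ref{lemR1}, and the branch $\beta\ne 0$, through the factorization $(\mu_k-\gamma_0)(e^{-iL\mu_k}-i)=0$, gives exactly $\cR$ (your convention $L\mu_j=-\pi(\frac{1}{2}+2k_j)$ yields the same set, the defining expression being quadratic in $\frac{1}{2}+2k$).

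The one genuine gap is the double-root case, which you dispatch with ``I expect to obtain \dots the identity $3L^2P_0^2=0$''. This is precisely the part on which the paper spends the most effort: conditions \eqref{F21}--\eqref{F26} lead to two transcendental equations in $L$ and finally to the cubic $32y^3-64y^2+42y-9=0$ in $y=\sin^2(L/(2\sqrt3))$, whose roots are excluded one by one; so this case cannot be waved at. That said, your guessed identity is correct (up to the harmless factor $3$) and can be justified cleanly, which makes your route, once completed, slicker than the paper's. Write $N_f,N_g$ for the two numerators, $a=e^{-iL\mu_0}$, $P_0=\gamma_0+\gamma_1 i\mu_0$. The derivative conditions $N_f'(\mu_0)=N_g'(\mu_0)=0$ read $\beta=a(LP_0-\gamma_1)$ and $\beta=a^{-1}(LP_0+\gamma_1)$, whose product gives $\beta^2=L^2P_0^2-\gamma_1^2$. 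Moreover your quadratic relation $\beta^2+\gamma_1^2=0$ survives in the double-root case: setting $R(\mu):=(\gamma_0+\gamma_1 i\mu)^2-(\alpha+\beta i\mu)(\alpha'-\beta i\mu)$, one has $R=N_fN_g-N_f\,(\alpha'-\beta i\mu)-N_g\,(\alpha+\beta i\mu)$, so that $R(\mu_0)=R'(\mu_0)=R(\mu_2)=0$ (every term of $R'$ contains one of $N_f,N_g,N_f',N_g'$), i.e. three zeros with multiplicity for a degree-two polynomial, whence $R\equiv 0$. Combining the two facts gives $L^2P_0^2=0$, hence $P_0=0$. Then $\beta=-a\gamma_1=a^{-1}\gamma_1$ forces $\gamma_1=0$ or $a^2=-1$; in the latter case, expressing $\alpha,\alpha'$ from $N_f(\mu_0)=N_g(\mu_0)=0$ and evaluating $N_f,N_g$ at the simple root $\mu_2$ yields $\beta=\gamma_1$ and $\beta=-\gamma_1$, so all coefficients vanish in either case (the configuration $\mu_0=-\tfrac{1}{\sqrt3}$ being symmetric). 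With this supplement your proof is complete; without it, the double-root case remains an unproved assertion rather than a proof.
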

\noindent
{\em Proof of Corollary  \ref{cor10}:} 
We proceed  in two steps as in the proof of Theorem \ref{thm2}.  \\

\noindent
{\sc Step 1. Reduction to a spectral problem.}\\[3mm]
If \eqref{GG3} is false, then one can pick a sequence $(\theta ^0_n, u^0_n)_{n\in \N}$ such that
\be
\label{GG4} 
1= \Vert (\theta ^0_n,u^0_n)\Vert ^2 _{X_2} > n  \int _0^T   | \theta _{n,xx} (t,L)|^2  \, dt. 
\ee
Extracting a subsequence if needed, we can assume that $(\theta ^0_n, u^0_n)\to (\theta ^0, u^0)$ weakly in $X_2$. 
Denote by $(\theta _n, u_n)$ (resp. $(\theta , u)$) the solution of \eqref{A51} with initial data $(\theta ^0_n , u^0_n)$ (resp. 
$(\theta ^0, u^0)$).  
Extracting again a subsequence, we can assume that $(\theta _n, u_n)\to (\theta, u)$ in $X_1$. By \eqref{GG1} and \eqref{GG4}, we
have that $(\theta ^0_n, u^0_n)\to (\theta ^0, u^0)$ strongly in $X_2$. Thus $\Vert (\theta ^0, u^0)\Vert_{X_2}=1$ and 
$\theta _{xx}(\cdot , L)=0$ a.e. in $(0,T)$. The same argument as the one in Step 1 of the proof of Theorem \ref{thm2} shows
that there exists 
 $\lambda \in \C$  and $(\theta ,u)\in X_3\setminus \{ (0,0) \}$ (superscript $0$ dropped for simplicity) solution of the spectral problem
\ba
-u'-u'''=\lambda \theta,&&  x\in (0,L), \label{C101}\\
-\theta ' -\theta '''=\lambda u,&& x\in (0,L), \label{C102}\\
\theta (0)=\theta (L)=\theta '(0)=\theta ''(L)=0,\label{C103}   \\
u(0)=u(L)=u'(L)=0.&& \label{C104}
\ea
We show in the next step that \eqref{C101}-\eqref{C104} has no nontrivial solution for any 
$L\in (0, + \infty ) \setminus  (\cN\cup \cR )$.\\

\noindent
{\sc Step 2. Study of the spectral problem.} 
\begin{proposition}
\label{prop88}
Let $L>0$. Then there exist $\lambda \in \C$  and $(\theta ,u)\in X_3\setminus \{ (0,0) \}$
solution of \eqref{C101}-\eqref{C104} if and only if $L\in \cN\cup \cR$. 
\end{proposition}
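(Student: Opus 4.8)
The plan is to mimic the proof of Theorem \ref{thm2}: first reduce the existence of a nontrivial solution of \eqref{C101}-\eqref{C104} to a Paley-Wiener condition on two entire functions, and then analyse that condition according to the multiplicity of the roots of $Q(\xi)=\xi^3-\xi+p$. First I would extend $\theta$ and $u$ by $0$ outside $[0,L]$ and compute the distributional derivatives. Since $\theta(0)=\theta(L)=\theta'(0)=\theta''(L)=0$ and $u(0)=u(L)=u'(L)=0$, only the traces $u'(0),u''(0),\theta''(0),u''(L),\theta'(L)$ survive, and taking Fourier transforms of the two equations gives, after setting $\lambda=-ip$, $f:=\hat\theta+\hat u$, $g:=\hat\theta-\hat u$,
\begin{eqnarray*}
f(\xi) &=& \frac{i}{\xi^3-\xi+p}\big(\alpha+\beta i\xi-(\gamma+\delta i\xi)e^{-iL\xi}\big),\\
g(-\xi) &=& \frac{i}{\xi^3-\xi+p}\big(\alpha'-\beta i\xi-(\gamma+\delta i\xi)e^{iL\xi}\big),
\end{eqnarray*}
with $\alpha:=u''(0)+\theta''(0)$, $\alpha':=u''(0)-\theta''(0)$, $\beta:=u'(0)$, $\gamma:=u''(L)$, $\delta:=\theta'(L)$. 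Reading off the Cauchy data at $x=L$, a solution is nontrivial if and only if $(\gamma,\delta)\ne(0,0)$, and by Paley-Wiener such a solution exists exactly when there are $p\in\C$ and parameters with $(\gamma,\delta)\ne(0,0)$ making both $f$ and $g(-\xi)$ entire. I would also record that, $A$ being skew-adjoint, $\lambda$ is purely imaginary, so $p\in\R$ and the roots $\mu_0,\mu_1,\mu_2$ of $Q$ are real or form a conjugate pair.

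Next I treat the case where $Q$ has three simple roots. Each $\mu_k$ must annihilate both numerators; multiplying the two relations and using $e^{-iL\mu_k}e^{iL\mu_k}=1$ shows that the quadratic $P(\mu):=(\beta^2+\delta^2)\mu^2+i[\beta(\alpha'-\alpha)-2\gamma\delta]\mu+(\alpha\alpha'-\gamma^2)$ vanishes at the three distinct roots, hence $P\equiv0$, i.e. $\beta^2+\delta^2=0$, $\beta(\alpha'-\alpha)=2\gamma\delta$, $\alpha\alpha'=\gamma^2$. If $\delta=0$ then $\beta=0$ and $\gamma\ne0$, the numerator of $f$ reduces to $\alpha-\gamma e^{-iL\xi}$, and Lemma \ref{lemR1} yields $L\in\cN$. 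If $\delta\ne0$ then $\beta=\pm i\delta$ forces $\alpha'=-\alpha$ and $\gamma=\mp i\alpha$, and the numerator of $f$ factors as $(\alpha\mp\delta\xi)(1\pm ie^{-iL\xi})$ (the $g$-conditions being automatically compatible); since $\alpha\mp\delta\xi$ vanishes at most once, at least two roots satisfy $e^{-iL\mu_k}=\pm i$, i.e. $L\mu_k=-\tfrac{\pi}{2}-2\pi m_k$. Summing over all three roots contradicts $\mu_0+\mu_1+\mu_2=0$, so exactly two, say $\mu_0,\mu_1$, obey this; combining the reality of the $\mu_k$ with the Vieta relation $\mu_0^2+\mu_0\mu_1+\mu_1^2=1$ gives $L^2=(L\mu_0)^2+(L\mu_0)(L\mu_1)+(L\mu_1)^2$ of exactly the form defining $\cR$, with $\mu_0\ne\mu_1$ providing $k\ne l$.

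For the double-root case $\mu_0=\mu_1\ne\mu_2$, with $(\mu_0,\mu_2)=\pm(1/\sqrt3,-2/\sqrt3)$, I would add the derivative conditions $N_f'(\mu_0)=N_g'(\mu_0)=0$; a short computation shows these force $P'(\mu_0)=0$, so $P$ has a double zero at $\mu_0$ and a further zero at $\mu_2$, whence again $P\equiv0$ and the three relations above hold. Now $\delta=0$ is impossible (it would make $N_f'(\mu_0)=i\gamma Le^{-iL\mu_0}\ne0$), while $\delta\ne0$ forces $e^{-iL\mu_0}=e^{-iL\mu_2}=\pm i$ with $\mu_2=-2\mu_0$, giving $2(m_2+2m_0)=\mp\tfrac32$ for integers $m_0,m_2$, which is impossible. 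Triple roots are excluded exactly as in Corollary \ref{cor2}. Hence a nontrivial solution can occur only for $L\in\cN\cup\cR$.

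For the converse I would reverse the construction: choosing the roots $\mu_k$ and the parameters produced above (via Lemma \ref{lemR1} when $L\in\cN$, and via the explicit $\mu_0,\mu_1,\mu_2$ together with $\beta=i\delta$, $\alpha'=-\alpha$, $\gamma=-i\alpha$ when $L\in\cR$) makes $f$ and $g(-\xi)$ entire with $(\gamma,\delta)\ne(0,0)$, and Paley-Wiener returns compactly supported $\theta,u$ solving \eqref{C101}-\eqref{C104}; alternatively one builds $\theta,u$ directly from the exponentials $e^{\pm i\mu_k x}$ and checks that the overdetermined boundary system has a nontrivial kernel precisely for these $L$. The hard part will be the double-root analysis — deriving $P'(\mu_0)=0$ from the two derivative conditions and then extracting the clean parity contradiction $2(m_2+2m_0)=\mp\tfrac32$ — together with verifying that the Fourier reconstruction in the converse genuinely yields a solution carrying the prescribed boundary traces.
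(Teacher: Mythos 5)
Your proof is correct, and its overall architecture --- extension by zero, Fourier transform, Paley--Wiener reduction to the entirety of $f(\xi)$ and $g(-\xi)$, then a case analysis on the multiplicity of the roots of $Q(\xi)=\xi^3-\xi+p$ --- coincides with the paper's; in the simple-root case your step of multiplying the two numerator conditions is the same computation as the paper's squaring-and-summing of its $\cos/\sin$ relations, and both routes land on $\beta^2+\delta^2=0$, $\beta(\alpha'-\alpha)=2\gamma\delta$, $\alpha\alpha'=\gamma^2$, with the branch $\delta=0$ giving $\cN$ via Lemma~\ref{lemR1} and the branch $\delta\ne 0$ giving $\cR$ via the factorization of the numerator and Vieta's relations. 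Where you genuinely depart from the paper is the double-root case: the paper extracts the six scalar equations \eqref{F21}--\eqref{F26} from the vanishing of the two numerators and their first derivatives at $\mu_0$ and of the numerators at $\mu_2$, solves for the coefficients in terms of $L$, and after a lengthy trigonometric manipulation reduces to the cubic $32y^3-64y^2+42y-9=0$ in $y=\sin^2(L/(2\sqrt 3))$, whose roots are then excluded one by one. Your argument is shorter and structurally cleaner: writing $P=N_fN_g+N_fF+EN_g$ with $E=(\gamma+\delta i\xi)e^{-iL\xi}$, $F=(\gamma+\delta i\xi)e^{iL\xi}$ shows that $P'(\mu_0)=0$ follows formally from $N_f(\mu_0)=N_f'(\mu_0)=N_g(\mu_0)=N_g'(\mu_0)=0$, so the quadratic $P$ has three zeros counted with multiplicity and vanishes identically; the same three relations and the same factorization $N_f=(\alpha\mp\delta\xi)\left(1\pm ie^{-iL\xi}\right)$ then apply, and since the exponential factor has only simple zeros while the linear factor has a single zero, a double zero of $N_f$ at $\mu_0$ forces both factors to vanish there, whence $e^{-iL\mu_0}=e^{-iL\mu_2}=\pm i$ with $\mu_2=-2\mu_0$ and the parity contradiction $2(m_2+2m_0)=\mp\frac{3}{2}$. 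I checked these steps; they are sound, and they buy a uniform treatment of the simple and double root cases that bypasses the paper's computation entirely. Two routine points you should still write out in the converse direction: for $L\in\cN$, Lemma~\ref{lemR1} only makes $f$ entire, and one must use that the roots are then simple and real to choose $\alpha'$ (the paper takes $\alpha'=\overline{\alpha}\gamma/\overline{\gamma}$, in its sign convention) so that $g(-\xi)$ is entire as well; and for $L\in\cR$ one must also set $\alpha=\delta\mu_2$ so that the linear factor cancels the pole at the third root, checking as the paper does that $\mu_2\notin\{\mu_0,\mu_1\}$ (another parity argument).
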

\begin{proof}
For $\theta$ and $u\in H^3(0,L)$, we still denote by $\theta$ and $u$ their extension by 0 on $\R$. 
Then
\[
u'''={\bf 1}_{(0,L)}u''' +  u''(0)\delta _0 - u''(L)\delta _L + u'(0)\delta _0' - u'(L) \delta _L'  +u(0)\delta _0''-u(L)\delta _L''\quad \textrm{ in } 
{\mathcal S} ' (\R ),
\]
where the derivatives of $u$ at $0$ or $L$ stand for the derivatives of $u_{\vert (0,L)}$ viewed as traces and ${\bf 1}_{(0,L)}$ denotes the characteristic function 
of the interval $(0,L)$. 
Assume that
\be
\label{C200}
\exists \lambda\in \C, \ \exists (\theta , u)\in X_3\setminus \{ (0,0)\}  \quad \textrm{ such that \eqref{C101}-\eqref{C104} hold}. 
\ee
Note that \eqref{C101}-\eqref{C102} yields
\begin{eqnarray}
\lambda \theta + u'+u''' &=&u''(0)\delta _0 - u''(L)\delta _L + u'(0)\delta _0' \quad
\textrm{ in } {\mathcal S} ' (\R ),    \label{C201} \\
\lambda u + \theta '+\theta ''' &=&\theta ''(0)\delta _0  - \theta '(L) \delta _L  '    \quad \textrm{ in } {\mathcal S} ' (\R ). \label{C202}
\end{eqnarray}

Conversely, if a pair $(\theta, u)$ in $L^2(\R ) ^2$ with $\textrm{Supp } \theta\cup \textrm{Supp }u\subset [0,L]$
satisfies \eqref{C201}-\eqref{C202} for some $\lambda\in\C$ and some complex coefficients
(instead of $u''(0), u''(L),$ etc.) in the r.h.s. of \eqref{C201}-\eqref{C202}, then $\theta , u\in H^3(0,L)$  and
 \eqref{C101}-\eqref{C104} hold.  

Let $\hat\theta (\xi )=\int_\R \theta (x) e^{-i\xi x} dx$  denote the Fourier transform of $\theta$. 
Introduce $ f (\xi) :=\hat \theta (\xi ) + \hat u(\xi )$ and $g(\xi ) := \hat \theta (\xi ) -\hat u (\xi )$. Then \eqref{C201}-\eqref{C202}
give
\ba
\lambda \hat \theta (\xi ) + (i\xi + (i\xi )^3) \hat u (\xi ) &=& u'(0) i\xi + u''(0) -u''(L)e^{-i L\xi }, \label{C221}\\
\lambda \hat u (\xi ) + (i\xi + (i\xi )^3 ) \hat \theta (\xi ) &=&  -i\xi \theta '(L) e^{-i L \xi } + \theta ''(0), \label{C222}  
\ea
and hence 
\ba
f(\xi ) &=& \frac{1}{-i\xi ^3 +i\xi +\lambda} \left(u''(0) +\theta ''(0) +u'(0) i\xi -u''(L) e^{-i L \xi}  -i\xi \theta ' (L) e^{-iL\xi}   \right),\\
g(\xi ) &=&   \frac{1}{i\xi ^3 - i\xi +\lambda} \left(u''(0) -\theta ''(0) +u'(0) i\xi -u''(L) e^{-i L \xi}  + i\xi \theta ' (L) e^{-iL\xi}   \right) .
\ea
Set $\lambda =-ip$. 
By Paley-Wiener theorem, we conclude that \eqref{C200} is equivalent to the existence of numbers
$p\in \C$ and 
$(\alpha ,\alpha ', \beta, \gamma , \gamma ' )\in \C ^5 \setminus \{ (0,0,0,0,0) \} $ such that the two functions defined for
$\xi\in \R$ by
\ba
f(\xi ) &=& \frac{i}{\xi ^3 -\xi +p} \left( \alpha  + \beta  i\xi +\gamma  e^{-i L \xi}  +\gamma '(i\xi)  e^{-iL\xi}   \right), \label{C241} \\
g(-\xi ) &=&   \frac{i}{\xi ^3 - \xi +p} \left( \alpha '  - \beta i\xi  +\gamma  e^{i L \xi}  + \gamma ' (i\xi)  e^{iL\xi}   \right) \label{C242}  
\ea
fulfill the conditions
\ba
&&f\textrm{ and } g \textrm{ are entire}; \label{C301}\\
&& (f,g)\in L^2(\R )^2 ; \label{C302} \\
&&\exists (C,N)\in (0,+\infty )\times \N , \ \forall \xi \in \C \quad | f(\xi ) | + |g(\xi ) |  \le C (1+ |\xi |)^N e^{L | \textrm{Im } \xi | } .\label{C303}
\ea
(We have set $\alpha : = u''(0) +\theta ''(0)$, $\alpha ' := u''(0) -\theta ''(0)$, $\beta := u'(0) $, $\gamma := -u''(L)$ and
$\gamma ' := -\theta '(L)$.) It is clear that for $f$ and $g$ given by \eqref{C241}-\eqref{C242}, \eqref{C301} implies both \eqref{C302} and \eqref{C303} (with $N=1$). 
Clearly, \eqref{C301} holds if and only if the two following functions 
\ba
f(\xi ) +  g ( -\xi  )&=& \frac{i}{\xi ^3 -\xi +p} \left( \alpha  + \alpha ' + 2 (\gamma  +\gamma 'i\xi) \cos (L\xi )    \right),\label{C401}\\
f(\xi ) -  g ( -\xi ) &=& \frac{i}{\xi ^3 -\xi +p} \left( \alpha  -\alpha ' + 2 \beta  i\xi   - 2i (\gamma + \gamma ' i\xi )  \sin (L\xi )    \right)  \label{C402}
\ea
are entire. 
Let $Q(\xi )=\xi ^3 -\xi +p$ and let $\mu _0,\mu _1, \mu _2$ denote its roots. The polynomial function $Q$ cannot have a triple root (see the proof of Corollary \ref{cor2}), so either 
the roots $\mu_k$, $k=0,1,2$, are simple, or there  are a double root $\mu_0=\mu_1$ and a simple root $\mu_2\ne \mu_0$.  \\
1. Assume that the roots $\mu_0, \mu _1,\mu _2$ of $Q$ are simple. If $\xi\in \{ \mu _0, \mu _1, \mu_2\}$, then from 
the fact that the functions in \eqref{C401}-\eqref{C402} are entire, we infer that   
\ba
2(\gamma + \gamma 'i\xi) \cos (L\xi ) &=& -\alpha -\alpha ', \label{C403}\\
2(\gamma + \gamma ' i\xi ) \sin (L\xi ) &=& \frac{\alpha -\alpha '}{i} +2\beta \xi , \label{C404}
\ea
and hence 
\[
4(\gamma + \gamma 'i\xi )^2 = (\alpha +\alpha ')^2 -(\alpha -\alpha ')^2 + 4\beta ^2 \xi ^2 + 4\frac{\beta \xi}{i}(\alpha -\alpha '). 
\]
The polynomial functions in the two sides of the above equation are of degree two and they take the same values on the numbers $\mu_k$, $k=0,1,2$. Therefore, they must have the same coefficients; that is, 
\ba
\beta ^2 &=& -\gamma '^2, \label{D1}\\
\alpha \alpha ' &=& \gamma ^2, \label{D2}\\
\beta (\alpha -\alpha ') &=&  -2\gamma \gamma' . \label{D3} 
\ea
(a) Assume that $\beta =i\gamma '$, so that \eqref{D3} becomes
\be
\label{D4}
i\gamma ' (\alpha -\alpha ') =-2\gamma \gamma '. 
\ee
(i) If $\gamma '=0$, then $\beta=0$  and \eqref{C241}-\eqref{C242} read 
\ba
f(\xi ) &=& \frac{i}{\xi ^3 -\xi +p} \left( \alpha  +\gamma  e^{-i L \xi}   \right), \label{D11} \\
g(-\xi ) &=&   \frac{i}{\xi ^3 - \xi +p} \left( \alpha '   +\gamma  e^{i L \xi}    \right) .\label{D12}  
\ea
It follows from Lemma \ref{lemR1} that there exist
$p\in \C$ and  $(\alpha,  \gamma)\in \C ^2\setminus \{ (0,0)\}$ such that the function
 $f$ defined in  \eqref{D11} is entire if and only if $L\in\cN$. If it is the case,  then the roots $\mu _0, \mu _1, \mu _2$ of $Q$ are
 simple and real, and $g(-\xi)$ defined in \eqref{D12} is also an entire function if we pick $\alpha' =
 \overline{\alpha} \gamma  / \overline{\gamma}$.  \\
 (ii) Assume now that $\gamma '\ne 0$. From \eqref{D2} and \eqref{D4}, we infer that 
 \[
 \alpha '=- i\gamma , \quad \alpha =i \gamma . 
 \] 
Then \eqref{C401}-\eqref{C402} become
\ba
f(\xi ) +  g ( -\xi  )&=& \frac{2i}{\xi ^3 -\xi +p}  (\gamma  +\gamma 'i\xi) \cos (L\xi )   , \label{D61bis}\\
f(\xi ) -  g ( -\xi ) &=& -\frac{2}{\xi ^3 -\xi +p} (\gamma + \gamma ' i\xi ) (1- \sin (L\xi ) ).\label{D62bis}   
\ea
$\bullet$ Assume that $-\frac{\gamma}{i\gamma ' }\not\in \{ \mu _0, \mu _1, \mu _2\}$, then each root $\mu _j$ of $Q$ should also be a root of 
$1-\sin (L\xi )$ by \eqref{D62bis}, and hence it could be written as
\[
L\mu _j =  \frac{\pi}{2} + 2k_j\pi , \quad k_j\in \Z .
\]
We arrive to the conclusion that 
\[
0=L(\mu _0+\mu_1 +\mu _2) = \pi (\frac{3}{2} + 2(k_0+k_1+k_2)),
\]
which is impossible, for $2(k_0+k_1+k_2)\in 2\Z$. \\
$\bullet$ Assume that $-\frac{\gamma}{i\gamma ' }\in \{ \mu _0, \mu _1, \mu _2\}$, say $\mu_0=-\gamma /(i\gamma ')$. Then 
both $f(\xi ) +  g ( -\xi  )$ and $f(\xi ) -  g ( -\xi  )$ are entire if and only if the other roots $\mu _1$ and  $\mu _2$ of $Q$ satisfy 
both equations 
\[
\cos(L\xi)=0 \textrm{ and } 1-\sin(L\xi )=0. 
\]
We arrive to the system
\ba
\mu_0 &=& -\frac{\gamma }{i\gamma '} , \label{E1}\\
L\mu _1 &=& \frac{\pi}{2} + 2k_1\pi, \quad k_1\in \Z ,\label{E2} \\
L\mu _2 &=& \frac{\pi}{2} + 2k_2\pi, \quad k_2\in \Z . \label{E3}
\ea

With those expressions, $\mu_0,\mu _1, \mu_2$ are roots of $Q$ if and only if
\[
\xi ^3 - (\mu _0+\mu _1+\mu _2)\xi ^2 + (\mu _0\mu_1  + \mu _0\mu _2 + \mu _1\mu _2) \xi  - \mu _0\mu _1\mu _2 =\xi ^3-\xi+p,\quad 
\forall \xi \in \C ,   
\] 
or
\ba
\mu _0+ \mu _1 +\mu _2 &=& 0, \label{E4}\\
\mu _0\mu _1 + \mu _0\mu_2 + \mu _1\mu _2 &=& -1 , \label{E5} \\
-\mu_0\mu_1\mu_2 &=& p. \label{E6} 
\ea
 
 We note that $\mu_0$ (and hence $\gamma$, if we pick $\gamma '=1$) is defined in terms of $k_1$ and $k_2$ by \eqref{E2}-\eqref{E4}, while
 $p$ is defined in terms of $k_1$ and $k_2$ by \eqref{E2}-\eqref{E4} and \eqref{E6}.  Replacing $\mu_0$ by $-(\mu _1+\mu _2)$ in 
 \eqref{E5} results in 
 \be
 \label{E7}
 \mu ^2_1+ \mu_1\mu_2 + \mu_2^2 =1. 
 \ee
Substituting the expressions of $\mu_1$ and $\mu _2$ given in \eqref{E2}-\eqref{E3} in \eqref{E7}, we obtain the critical length
\be
\label{E8}
L=\pi \left( (\frac{1}{2} + 2k_1)^2 + (\frac{1}{2} + 2k_2)^2 + (\frac{1}{2} + 2k_1)(\frac{1}{2} + 2k_2) \right) ^\frac{1}{2}.
\ee
Finally, the roots $\mu_0,\mu_1,\mu_2$ are pairwise distinct if and only if $k_1\ne k_2$. Indeed, if $k_1\ne k_2$, it is clear that $\mu _1\ne\mu_2$, while 
\[
\mu_0 = -\frac{\pi}{L} (1+2(k_1+k_2)) \ne \mu_j=\frac{\pi}{L} (\frac{1}{2} + 2k_j), \quad j=1,2. 
\] 
(b) Assume now that $\beta =-i\gamma '$. The analysis is completely similar to the one in case (a), so that we only sketch the main facts. 
Here $i\gamma '(\alpha-\alpha ')=2\gamma \gamma '$.\\
 If $\gamma '=0$, we obtain again that
the existence of a nontrivial solution $p\in\C$, $(\alpha, \alpha ', \gamma )\in \C ^3 \setminus \{ (0,0,0) \}$  of the spectral 
problem is equivalent to $L\in \cN$. \\
If $\gamma ' \ne 0$, then we obtain that $\alpha '=i\gamma$, $\alpha =-i\gamma$, and
\begin{eqnarray*}
f(\xi ) +  g ( -\xi  )&=& \frac{2i}{\xi ^3 -\xi +p}  (\gamma  +\gamma 'i\xi) \cos (L\xi )   , \label{D61}\\
f(\xi ) -  g ( -\xi ) &=& \frac{2}{\xi ^3 -\xi +p} (\gamma + \gamma ' i\xi ) (1 + \sin (L\xi ) ).\label{D62}   
\end{eqnarray*}
As in case (a), we can see that the assumption  $-\frac{\gamma}{i\gamma ' }\not\in \{ \mu _0, \mu _1, \mu _2\}$ 
leads to a contradiction. 
Assume thus that   $-\frac{\gamma}{i\gamma ' }\in \{ \mu _0, \mu _1, \mu _2\}$, say $\mu_0=-\gamma /(i\gamma ')$. 
The other roots $\mu_1$ and $\mu_2$ should solve the equations
\[
\cos (L\xi )=0, \quad \sin (L\xi) =-1. 
\]
Writing 
\[
L\mu_1= -\frac{\pi}{2} + 2k_1\pi,\ L\mu _2 = -\frac{\pi}{2} + 2k_2\pi,\qquad k_1\ne k_2\in \Z, 
\]
we obtain as critical length
\[
L=\pi \left( ( 2k_1 -\frac{1}{2} )^2 + (2k_2 -\frac{1}{2} )^2 + (2k_1 -\frac{1}{2} )(2k_2 -\frac{1}{2} ) \right) ^\frac{1}{2},
\]
and we notice that $L\in \cR$. \\
\noindent
2. Assume that $Q$ has a double root $\mu_0=\mu_1$ and a simple root $\mu_2\ne \mu_0$.  Then 
$(\mu_0,\mu_2)=\pm (\frac{1}{\sqrt{3}}, -\frac{2}{\sqrt{3}})$. We shall consider the case  
$(\mu_0,\mu_2)= (\frac{1}{\sqrt{3}}, -\frac{2}{\sqrt{3}})$, the case $(\mu_0,\mu_2)= (-\frac{1}{\sqrt{3}}, \frac{2}{\sqrt{3}})$
being similar. As $\mu _0$ is a double root of $Q$, it should be root of the numerators of the functions $f( \xi ) + g (\xi )$ 
and $f(\xi ) -g (-\xi )$ (see
\eqref{C401} and \eqref{C402}) and of their first derivatives, so that 
\ba
2(\gamma + \gamma'  \frac{i}{\sqrt{3}}) \cos (\frac{L}{\sqrt{3}} ) &=& -(\alpha + \alpha '), \label{F21}\\
2i(\gamma + \gamma ' \frac{i}{\sqrt{3}} ) \sin (\frac{L}{\sqrt{3}}) &=& \alpha -\alpha ' 
+2\frac{\beta i }{\sqrt{3}} ,\label{F22}\\
2\gamma 'i \cos(\frac{L}{\sqrt{3}}) +2(\gamma + i \frac{\gamma '}{\sqrt{3}} ) (-L\sin \frac{L}{\sqrt{3}}) &=& 0, \label{F23}\\ 
2\beta i + 2  \gamma'  \sin (\frac{L}{\sqrt{3}}) - 2 i (\gamma  + i \frac{\gamma '}{\sqrt{3}} )  L \cos (\frac{L}{\sqrt{3}}) &=&0 .\label{F24}
\ea 
As $\mu _2$ is a simple root of $Q$, it has to be a root of the numerators of the functions $f(\xi ) +g(- \xi)$ and  $f(\xi ) -g( -\xi )$, so that 
\ba
2(\gamma - \gamma '  \frac{2i}{\sqrt{3}} )  \cos (\frac{2L}{\sqrt{3}}) &=& -(\alpha + \alpha '), \label{F25}\\
2i (\gamma - \gamma ' \frac{2i}{\sqrt{3}}) \sin (-2 \frac{L}{\sqrt{3}}) &=& \alpha -\alpha ' 
-4\frac{\beta i}{\sqrt{3}} .\label{F26}
\ea
Note that \eqref{F23}-\eqref{F24} can be rewritten as 
\[
\left( 
\begin{array}{cc}
\cos (\frac{L}{\sqrt{3}})  & - \sin  (\frac{L}{\sqrt{3}})  \\[3mm]
\sin (\frac{L}{\sqrt{3}}) & \cos (\frac{L}{\sqrt{3}})  
\end{array}
\right)  \ 
\left(
\begin{array}{c}
i \gamma' \\
L (\gamma + i \frac{\gamma '}{ \sqrt{3} } ) 
\end{array}
\right) 
= \left(
\begin{array}{c}0 \\
 \beta  
 \end{array}  
 \right)  \cdot
\]
Picking $\beta =1$, this yields
\ba
i\gamma' &=&  \sin( \frac{L}{\sqrt{3} }  ), \label{F31}\\
L( \gamma +  i\frac{\gamma '}{\sqrt{3} } ) &=& \cos (\frac{L}{\sqrt{3} } )  \cdot \label{F32}
\ea
In particular, $(i\gamma',\gamma )\in\R ^2$. From \eqref{F21} and \eqref{F25}, we infer that 
\[
(\gamma + \gamma'  \frac{i}{\sqrt{3}}) \cos (\frac{L}{\sqrt{3}} )
= (\gamma - \gamma '  \frac{2i}{\sqrt{3}} )  \cos (\frac{2L}{\sqrt{3}}).
\]
Combined with \eqref{F31} and \eqref{F32}, this yields
\be
\label{F33}
\cos ^2 ( \frac{L}{\sqrt{3}} )=\left( \cos (\frac{L}{\sqrt{3}} ) 
 - \sqrt{3}L \sin ( \frac{L}{\sqrt{3}} )  \right) \cos ( \frac{2L}{\sqrt{3}} ).  
\ee
As the set of solutions of \eqref{F33} cannot have a limit point, we conclude that on any segment $[0,R]$ there are at most finitely many $L$ satisfying \eqref{F33}. 

 From \eqref{F22} and \eqref{F26}, we infer that 
\[
2i (\gamma + \gamma ' \frac{i}{\sqrt{3}} ) \sin (\frac{L}{\sqrt{3}}) 
- 2\frac{\beta i}{\sqrt{3}}  =
2i (\gamma - \gamma ' \frac{2i}{\sqrt{3}}) \sin (-2 \frac{L}{\sqrt{3}}) 
+ 4\frac{\beta i}{\sqrt{3}} .
\]
Combined with \eqref{F31} and \eqref{F32}, this yields (with $\beta =1$)
\be
\label{F34}
L^{-1} \cos  ( \frac{L}{\sqrt{3}} ) \sin  ( \frac{L}{\sqrt{3}} )
=\left( L^{-1}  \cos (\frac{L}{\sqrt{3}}  )
 - \sqrt{3} \sin ( \frac{L}{\sqrt{3}} )  \right) \sin( \frac{-2L}{\sqrt{3}} ) +\sqrt{3}.  
\ee
We claim that  
\[
\cos (\frac{L}{\sqrt{3}} ) \sin ( \frac{L}{\sqrt{3}} ) \ne 0. 
\]
Indeed, otherwise we would have either $\cos ( \frac{L}{\sqrt{3}} )=0$ and then $\sin (\frac{L}{\sqrt{3}} ) =\pm 1$, so that $L=0$  by \eqref{F33},
which is impossible,  or  $\sin ( \frac{L}{\sqrt{3}} ) =0$ which is impossible by \eqref{F34}.  

We infer from \eqref{F33} and \eqref{F34} that 
\be
\label{F35}
\cos  ( \frac{L}{\sqrt{3}} ) \sin  ( \frac{L}{\sqrt{3}} )
= \frac{\cos ^2 (\frac{L}{\sqrt {3}} )  }{ \cos  (\frac{2L}{\sqrt {3}} )}
\sin( \frac{-2L}{\sqrt{3}} ) +\sqrt{3} L.  
\ee
On the other hand, it follows from \eqref{F33} that 
\be
\label{F36}
\sqrt{3} L = \frac{1}{\sin (\frac{L}{\sqrt {3}} )} \left( \cos   (\frac{L}{\sqrt {3}} ) - \frac{\cos ^2 (\frac{L}{\sqrt {3}} )}{\cos  (\frac{2L}{\sqrt {3}} )} \right)  . 
\ee
Replacing the last term in \eqref{F35} by its expression in \eqref{F36}, multiplying the equation by 
$\cos ( \frac{ 2L}{\sqrt {3}} )    \sin  (\frac{L}{\sqrt {3}} )$ and simplifying by $\cos ( \frac{ L}{\sqrt {3}} )$, we arrive to 
\[
\sin ^2  ( \frac{ L}{\sqrt {3}} ) \cos  ( \frac{ 2L}{\sqrt {3}} ) =-2 \cos ^2  ( \frac{ L}{\sqrt {3}} ) \sin ^2  ( \frac{ L}{\sqrt {3}} ) 
+ \cos  ( \frac{ 2L}{\sqrt {3}} )  - \cos  ( \frac{ L}{\sqrt {3}} ) ,  
\]  
or 
\[
\sin ^2  ( \frac{ L}{\sqrt {3}} ) \big( \cos  ( \frac{ 2L}{\sqrt {3}} ) + 2 \cos ^2  ( \frac{ L}{\sqrt {3}} ) \big)  
=-2  \sin  ( \frac{ 3L}{2\sqrt {3}} )   \sin  ( \frac{ L}{2\sqrt {3}} ) \cdot
\]  
Letting  $x: =\frac{L}{2\sqrt{3}}$, we obtain that 
\[ \sin ^2 (2x) (4\cos ^2 (2x) -1) = -2\sin (3x)\sin x = -2(3\sin x - 4\sin ^3 x) \sin x.  \]
Simplifying in the above equation by $\sin ^2 x$ (which is different from 0 since $\sin (L/\sqrt {3})$ is), we arrive to
\[
4 (1-\sin ^2 x) (3-16 \sin ^2 x + 16 \sin ^4 x) = -2(3-4\sin ^2 x). 
\]
Setting $y:= \sin ^2 x$, we obtain the polynomial equation 
\[
32 y^3 -64 y^2 +42 y-9=0
\]
which has $y=3/4$ as a double root and $y=1/2$ as a single root.  
But $y=1/2$ would give $\cos (L/\sqrt{3})=\cos (2x) = 2\cos  ^2 x -1=0$, which is impossible, and $y=3/4$ 
would give $\cos (L/\sqrt{3})=1-2y=-1/2$, $\cos (2L/\sqrt{3})=-1/2$, $\sin (L/\sqrt{3})=\pm \sqrt{3}/2$. Plugging those values in 
\eqref{F33} would give 
\[
\frac{1}{4} = \left( -\frac{1}{2} -\sqrt{3}L \cdot (\pm \frac{\sqrt{3}}{2}) \right) \cdot (- \frac{1}{2})
\] 
and hence $L=0$, which is impossible. 
The proof of Proposition \ref{prop88} and of Corollary \ref{cor10} is complete. 
\end{proof}

Next, we investigate the spectral problems associated with cases 10 and 11. 
\begin{corollary}
\label{cor3}
Let $L>0$. Then there exist $\lambda \in \C$  and $(\theta ,u)\in X_3\setminus \{ (0,0) \}$
solution of \eqref{C101}-\eqref{C104}  together with   $u''(0)=0$ if and only if $L\in \cR$. 
\end{corollary}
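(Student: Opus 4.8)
The plan is to revisit the case analysis carried out in the proof of Proposition~\ref{prop88}, now keeping track of the extra condition $u''(0)=0$. With the notation introduced there (recall $\alpha := u''(0)+\theta ''(0)$, $\alpha ' := u''(0)-\theta ''(0)$, $\beta := u'(0)$, $\gamma := -u''(L)$, $\gamma ' := -\theta '(L)$), one has $u''(0)=\frac{1}{2}(\alpha +\alpha ')$, so that the additional constraint $u''(0)=0$ is \emph{equivalent} to $\alpha '=-\alpha$. Thus I only have to decide, among the nontrivial solutions of \eqref{C101}-\eqref{C104} produced in Proposition~\ref{prop88}, which ones satisfy $\alpha +\alpha '=0$.

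For the ``if'' direction I would recall that the lengths $L\in\cR$ arise exactly in the subcases where $Q$ has three simple roots and $\gamma '\ne 0$ (the subcases $\beta =\pm i\gamma '$ with $\gamma '\ne 0$). There it was found that $\alpha =i\gamma$, $\alpha '=-i\gamma$ when $\beta =i\gamma '$ (resp.\ $\alpha =-i\gamma$, $\alpha '=i\gamma$ when $\beta =-i\gamma '$), so that $\alpha +\alpha '=0$ holds \emph{automatically}. Hence for every $L\in\cR$ the solution already constructed in Proposition~\ref{prop88} satisfies $u''(0)=0$, and since $\gamma '=-\theta '(L)\ne 0$ it is nontrivial. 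This settles existence.

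For the ``only if'' direction I must check that imposing $\alpha '=-\alpha$ rules out both the lengths in $\cN$ and the double-root case. The double-root case $\mu_0=\mu_1\ne\mu_2$ was shown in Proposition~\ref{prop88} to admit no nontrivial solution at all, so nothing changes there. The lengths $L\in\cN$ come exclusively from the subcase $\gamma '=0$ (forcing $\beta =0$), where $f(\xi)=\frac{i}{\xi ^3-\xi +p}(\alpha +\gamma e^{-iL\xi})$ is entire; by Lemma~\ref{lemR1} the three real roots $\mu_0,\mu_1,\mu_2$ of $Q$ then satisfy $\alpha =-\gamma e^{-iL\mu_k}$ and $\alpha '=-\gamma e^{iL\mu_k}$, whence $u''(0)=\frac{1}{2}(\alpha +\alpha ')=-\gamma\cos(L\mu_k)$. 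The crux is to verify that $\cos(L\mu_k)\ne 0$: writing $L=\frac{2\pi}{\sqrt{3}}\sqrt{k^2+kl+l^2}$ with $k,l\in\N^*$ and using the explicit roots from \cite{R1} (as recalled in Proposition~\ref{prop6}), one has $L\mu_0=-\frac{2\pi}{3}(2k+l)$, so $\cos(L\mu_0)=\cos\!\big(\frac{2\pi}{3}(2k+l)\big)\in\{1,-\frac{1}{2}\}$, which is never $0$. Therefore $u''(0)=0$ forces $\gamma =0$, hence $\alpha =\alpha '=0$ and $(\theta ,u)=0$: no nontrivial solution with $u''(0)=0$ exists when $L\in\cN\setminus\cR$.

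Combining the two directions, a nontrivial solution of \eqref{C101}-\eqref{C104} with $u''(0)=0$ exists precisely when one sits in a simple-root subcase with $\gamma '\ne 0$, that is, precisely when $L\in\cR$. The only genuinely new ingredient beyond Proposition~\ref{prop88} is the elementary arithmetic fact that $\cos\!\big(\frac{2\pi}{3}(2k+l)\big)\ne 0$ for all integers $k,l$; this is the single point at which the extra Neumann-type trace condition $u''(0)=0$ truly separates the two sets $\cN$ and $\cR$, and I expect this short verification to be the main (if modest) obstacle.
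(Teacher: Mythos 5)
Your proof is correct and takes essentially the same route as the paper: reduce the extra condition $u''(0)=0$ to $\alpha'=-\alpha$, observe that it holds automatically in the simple-root subcases with $\gamma'\neq 0$ (which produce $\cR$), and exclude the subcase $\gamma'=0$, $\beta=0$ (which produces $\cN$) by showing that $\cos(L\mu_0)\neq 0$ for the explicit root $\mu_0=-\tfrac{2\pi}{3L}(2k+l)$. The paper phrases this last step as a Diophantine contradiction ($\tfrac{3}{2}=-3p-2(2k+l)$ with $p,k,l\in\Z$) rather than evaluating $\cos\bigl(\tfrac{2\pi}{3}(2k+l)\bigr)\in\{1,-\tfrac12\}$ directly, but this is the same arithmetic fact.
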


\begin{proof}
We use the same notations as in the proof of Proposition \ref{prop88}. The additional condition $u''(0)=0$ is equivalent to 
the condition $\alpha' =-\alpha$. When $\gamma '\ne 0$, we obtained that $(\alpha, \alpha ') =(i\gamma, -i\gamma)$ for
$\beta =i\gamma'$ (case (a)), and that $(\alpha, \alpha ') =(-i\gamma, i\gamma)$ for
$\beta = - i\gamma'$ (case (b)). Thus the condition $\alpha' =-\alpha$ is automatically satisfied for the critical lengths $L\in \cR$. 

Consider now the case $\gamma '=0$ with $\beta =\pm i\gamma ' =0$ corresponding to the critical length $L\in \cN$.  
From \cite[Lemma 3.5]{R1}, we know that the three roots $\mu _0$, $\mu _1$, $\mu _2$ of $Q$ can be written as
\be
\label{G1}
\mu _0=-\frac{1}{3} (2k+l)\frac{2\pi}{L},\quad  \mu_1= \mu _0+ k\frac{2\pi}{L}, \quad  \mu _2= \mu_1 + l\frac{2\pi}{L}, 
\ee
for some $k,l\in \N^*$. From \eqref{C401}, we know that the $\mu _j$ have to be zeros of the function $\gamma \cos (L\xi)$. 
If $\gamma=0$, then we infer from \eqref{C241}-\eqref{C242} that $0=\alpha =\alpha '$, and hence $(\theta ^0,u^0)=(0,0)$.
If $\gamma \ne 0$, from $\cos(L\mu_0)=0$, we infer the existence of some $p\in\Z$ such that 
$L\mu _0=\pi (\frac{1}{2}+p)$, so that 
$\frac{1}{2} + p = -2(2k+l)/3$. This gives $3/2=-3p-2(2k+l)$ where $k,l,p\in\Z$, which is impossible. 
\end{proof}

\begin{corollary}
\label{cor4}
Let $L>0$. Then there exist $\lambda \in \C$  and $(\theta ,u)\in X_3\setminus \{ (0,0) \}$
solution of \eqref{C101}-\eqref{C104}  together with   $\theta ''(0)=0$ if and only if $L\in \cN _3$. 
\end{corollary}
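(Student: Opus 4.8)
The plan is to follow verbatim the strategy of Corollary \ref{cor3}, reusing the notation and the dichotomy (simple roots versus a double root, and the two families $\gamma '=0$ versus $\gamma '\ne 0$) already set up in the proof of Proposition \ref{prop88}. The only genuinely new ingredient is the translation of the extra boundary condition: since $\alpha -\alpha '=2\theta ''(0)$, the requirement $\theta ''(0)=0$ is \emph{exactly} $\alpha =\alpha '$. This is the mirror image of the condition $\alpha +\alpha '=0$ that governed $u''(0)=0$ in Corollary \ref{cor3}; consequently, the role played there by the cosine factor in \eqref{C401} will now be played by the sine factor in \eqref{C402}.

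First I would dispose of the family with $\gamma '\ne 0$, which produced the lengths in $\cR$. There we found $(\alpha ,\alpha ')=(\pm i\gamma ,\mp i\gamma )$, so imposing $\alpha =\alpha '$ forces $\gamma =0$. But $\gamma =0$ gives $\mu _0=-\gamma /(i\gamma ')=0$, hence $p=-\mu _0\mu _1\mu _2=0$ and $Q(\xi )=\xi (\xi -1)(\xi +1)$ with roots $0,\pm 1$; the constraints \eqref{E2}-\eqref{E3} on $\mu _1,\mu _2$ then become $\pm L=\tfrac{\pi }{2}+2\pi k_j$, whence $0=\pi +2\pi (k_1+k_2)$, which is impossible. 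Thus no nontrivial solution with $\theta ''(0)=0$ comes from the $\cR$ branch; in particular no length in $\cR\setminus\cN$ can belong to the desired set.

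Next I would treat the family $\gamma '=\beta =0$, which yields $L\in\cN$ with the three real simple roots $\mu _0,\mu _1,\mu _2$ of \eqref{G1}. Setting $\alpha =\alpha '$ collapses the numerator of $f(\xi )-g(-\xi )$ in \eqref{C402} to $-2i\gamma \sin (L\xi )$, so every root $\mu _j$ of $Q$ must be a zero of $\gamma \sin (L\xi )$. If $\gamma =0$ then $\alpha =\alpha '=0$ and the solution is trivial, so I may assume $\gamma \ne 0$ and deduce $\sin (L\mu _j)=0$, i.e. $L\mu _j\in\pi\Z$, for $j=0,1,2$. Since $L\mu _1-L\mu _0=2\pi k$ and $L\mu _2-L\mu _1=2\pi l$, these three conditions reduce to the single one $L\mu _0\in\pi\Z$; as $L\mu _0=-\tfrac{2\pi }{3}(2k+l)$, this holds precisely when $3\mid (2k+l)$, that is, when $L\in\cN _3$.

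For the converse I would exhibit the solution explicitly: when $3\mid (2k+l)$ one has $e^{-iL\mu _j}=e^{-iL\mu _0}=\pm 1=:c$ for all $j$, and the choice $\gamma =1$, $\alpha =\alpha '=-c$, $\beta =\gamma '=0$ makes both numerators in \eqref{C241}-\eqref{C242} vanish at $\mu _0,\mu _1,\mu _2$ (using $e^{iL\mu _j}=\bar c=c$), so $f$ and $g(-\cdot )$ are entire; by Paley--Wiener this produces a nontrivial $(\theta ,u)\in X_3$ solving \eqref{C101}-\eqref{C104} with $\alpha =\alpha '$, hence $\theta ''(0)=0$. I expect the main (though minor) obstacle to be the bookkeeping around the non-uniqueness of the representation $L=\tfrac{2\pi }{\sqrt 3}\sqrt{k^2+kl+l^2}$: the extra condition is satisfiable as soon as \emph{some} admissible pair $(k,l)$ obeys $3\mid (2k+l)$, which is precisely the definition of $\cN _3$, so the equivalence must be phrased in terms of the existence of such a pair rather than for a fixed one.
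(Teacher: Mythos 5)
Your proposal is correct and follows essentially the same route as the paper: the translation $\theta''(0)=0\iff\alpha=\alpha'$, the elimination of the $\gamma'\ne 0$ (i.e.\ $\cR$) branch by forcing $\gamma=0$ and hence $\mu_0=0$ with the parity contradiction $0=\pi(\pm1+2(k_1+k_2))$, and in the $\gamma'=\beta=0$ branch the reduction to $L\mu_j\in\pi\Z$, which via \eqref{G1} becomes $3\mid(2k+l)$, i.e.\ $L\in\cN_3$. Your condition $\sin(L\mu_j)=0$ from \eqref{C402} is exactly the paper's condition $2L\mu_j\in 2\pi\Z$ obtained from $\alpha+\gamma e^{-iL\mu_j}=0=\alpha+\gamma e^{iL\mu_j}$, and the remaining differences (explicit converse construction, treating only $\beta=i\gamma'$ in display while $\beta=-i\gamma'$ is parallel) are cosmetic.
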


\begin{proof}
The additional condition $\theta ''(0)=0$ is equivalent to  $\alpha '=\alpha$. \\
If $\gamma ' \ne 0$, then we have that 
$\alpha '=- \alpha =\pm i \gamma$, so that $\alpha=\alpha '=\gamma =0$.  
If $\beta = i\gamma '$ and $0=-\frac{\gamma}{i\gamma '} \in \{ \mu _0, \mu  _1, \mu _2\}$, say $\mu_0=0$, then by \eqref{E1}-\eqref{E3} 
\[
0 = - L \mu _0 = L( \mu _1 + \mu _2 ) =  \pi (1 + 2(k_1+k_2))
\] 
with $k_1,k_2\in \Z$, which is impossible. If $\beta =  - i\gamma '$ and $0=-\frac{\gamma}{i\gamma '}  = \mu _0$, we arrive to 
\[
0 = - L \mu _0 = L( \mu _1 + \mu _2 ) =  \pi ( - 1 + 2(k_1+k_2))
\]
with  $k_1,k_2\in \Z$, which again is impossible.\\
Thus no $L\in \cR$ is a critical length for \eqref{C101}-\eqref{C104} 
with the additional condition $\theta ''(0)=0$.\\ 
If $\gamma '=0$, then $\beta =0$ by \eqref{D1}, and \eqref{D11}-\eqref{D12} hold with $\alpha '=\alpha$. For each root 
$\xi \in \{ \mu _0, \mu _1, \mu _2 \}$ of $Q$, we have that 
\[ 
\alpha + \gamma e^{-iL \xi} = 0 = \alpha + \gamma e^{iL\xi}. 
\]
This is equivalent to \eqref{G1} together with the the condition $2L\mu_j\in 2\pi \Z$ for $j=0,1,2$. 
With \eqref{G1}, the latter condition is equivalent to $2L \mu _0\in 2\pi \Z$, i.e. there exists $q\in\Z$ such that 
$ (-(2k+l)2\pi /(3L))2L =2\pi q$, or $3q=-2(2k+l)$. This is possible if and only if $3\vert 2k+l$, i.e. $L\in \cN _3$. 
\end{proof}

We now turn our attention to case 3. 
\begin{theorem}
\label{thm11}
For all $L\in (0, + \infty ) \setminus  (\cN\cup \cG \cup \cG ' )$ and all $T>0$, there exists a constant $C=C(L,T)>0$ such that 
\be
\label{GG10} 
\Vert (\theta ^0,u^0)\Vert ^2 _{X_2} \le C \int _0^T  | \theta _{xx} (t,0)|^2  \, dt  .
\ee 
\end{theorem}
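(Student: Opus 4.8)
The plan is to follow the two-step scheme of Corollary~\ref{cor10}, with two changes: the weak observability estimate \eqref{GG1} is replaced by its companion \eqref{GG2} (since, as noted in the introduction, the Morawetz identity \eqref{A200} is useless here), and in the spectral step the problem of Proposition~\ref{prop88} is replaced by the spectral problem attached to the measured trace $\theta_{xx}(\cdot,0)$. I claim the whole difficulty is concentrated in the second step, namely in identifying the critical set as $\cN\cup\cG\cup\cG'$.

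\emph{Step 1 (reduction).} Arguing by contradiction, if \eqref{GG10} fails one can pick a sequence with $1=\Vert(\theta^0_n,u^0_n)\Vert^2_{X_2}>n\int_0^T|\theta_{n,xx}(t,0)|^2\,dt$. Extracting a subsequence with $(\theta^0_n,u^0_n)\to(\theta^0,u^0)$ weakly in $X_2$, hence strongly in $X_1$ (compact embedding), and applying \eqref{GG2} to the differences $(\theta^0_n-\theta^0_m,u^0_n-u^0_m)$, I get that the sequence is Cauchy in $X_2$; thus $\Vert(\theta^0,u^0)\Vert_{X_2}=1$ and $\theta_{xx}(\cdot,0)=0$. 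Running the semigroup argument of Step~1 of Theorem~\ref{thm2} verbatim — but now using \eqref{GG2} to prove that $N_T:=\{(\theta^0,u^0)\in X_2:\ \theta_{xx}(\cdot,0)=0\text{ on }(0,T)\}$ is finite dimensional (on $N_T$ the $X_2$- and $X_0$-norms are equivalent by \eqref{GG2}, and the embedding $X_2\subset X_0$ is compact) and $A$-invariant — I obtain $\lambda\in\C$ and $(\theta,u)\in X_3\setminus\{(0,0)\}$ solving
\begin{align*}
-u'-u'''&=\lambda\theta,\qquad -\theta'-\theta'''=\lambda u,\qquad x\in(0,L),\\
\theta(0)&=\theta(L)=\theta'(0)=\theta''(0)=0,\qquad u(0)=u(L)=u'(L)=0.
\end{align*}
It then suffices to show that this problem has no nontrivial solution when $L\notin\cN\cup\cG\cup\cG'$.

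\emph{Step 2 (spectral analysis).} Extending $\theta,u$ by $0$ and taking Fourier transforms as in Proposition~\ref{prop88}, and writing $\lambda=-ip$, $f=\hat\theta+\hat u$, $g=\hat\theta-\hat u$, I obtain
\begin{align*}
f(\xi)&=\frac{i}{\xi^3-\xi+p}\big(\alpha+\beta i\xi+(\gamma+\delta i\xi)e^{-iL\xi}\big),\\
g(-\xi)&=\frac{i}{\xi^3-\xi+p}\big(\alpha-\beta i\xi+(\gamma'+\delta i\xi)e^{iL\xi}\big),
\end{align*}
with $\alpha=u''(0)$, $\beta=u'(0)$, $\gamma=-u''(L)-\theta''(L)$, $\gamma'=-u''(L)+\theta''(L)$, $\delta=-\theta'(L)$. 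The condition $\theta''(0)=0$ forces the same constant term $\alpha$ in both numerators; the genuinely new feature, compared with Case~1, is the term $\delta(i\xi)e^{\mp iL\xi}$, which survives because $\theta'(L)$ is free here. By Paley--Wiener $f$ and $g$ must be entire, so each root $\mu_k$, $k=0,1,2$, of $Q(\xi)=\xi^3-\xi+p$ annihilates both numerators. The triple-root case is excluded as in Corollary~\ref{cor2}, and the double-root configuration is ruled out by the same kind of computation, yielding no critical length. In the simple-root case the vanishing of the two numerators at $\mu_0,\mu_1,\mu_2$ is a $6\times5$ homogeneous system in $(\alpha,\beta,\gamma,\gamma',\delta)$, which must have rank at most $4$. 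When $\delta=0$ the numerators collapse to the form of Case~1 and Lemma~\ref{lemR1} gives exactly $L\in\cN$. When $\delta\neq0$, eliminating $\alpha,\beta,\gamma,\gamma'$ forces $w_k:=e^{iL\mu_k}$ to be a common M\"obius transform of $\mu_k$, namely $w_k=\mp i\,(\mu_k+v)/(v-\mu_k)$ for some $v\in\C$, the sign being fixed by a quadratic relation among the parameters (just as $\beta=\pm i\gamma'$ occurred in Proposition~\ref{prop88}); setting $a=iL\mu_0$, $b=iL\mu_1$, so that $-a-b=iL\mu_2$ since $\sum_k\mu_k=0$, the two sign branches become precisely the defining relations of $\cG$ and $\cG'$, while $\sum_{j<k}\mu_j\mu_k=-1$ reads $L^2=-(a^2+ab+b^2)$.

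The hard part is this last elimination. Because $\gamma\neq\gamma'$ in general (equivalently $\theta''(L)\neq0$), the trick of squaring and summing used in Proposition~\ref{prop88}, which relied on $\cos^2+\sin^2=1$ with a common factor, no longer removes the exponentials; one must instead carry out the rank-deficiency analysis of the $6\times5$ system directly, isolate the two admissible sign branches, and match the resulting transcendental conditions term by term with the definitions of $\cG$ and $\cG'$. One must also verify that the degenerate configurations (some $\mu_k$ coinciding, $v\in\{\mu_0,\mu_1,\mu_2\}$, or the common value equal to $0$) either reproduce $\cN$ or are genuinely excluded, so that the critical set is exactly $\cN\cup\cG\cup\cG'$ and no smaller. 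Once the spectral problem is shown to have no nontrivial solution for $L\notin\cN\cup\cG\cup\cG'$, the contradiction in Step~1 establishes \eqref{GG10}.
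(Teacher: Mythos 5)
Your Step 1 matches the paper's and is fine (note only that Theorem \ref{thm100} requires $L\notin 2\pi\Z$, which is automatic here since $2\pi k=\frac{2\pi}{\sqrt{3}}\sqrt{k^2+k\cdot k+k^2}\in\cN$). The gap is in Step 2, where the whole difficulty of the theorem lies, and it is twofold. First, the claim on which you base your alternative strategy is false: the $\cos^2+\sin^2=1$ trick does \emph{not} break down when the two exponential coefficients differ. At each simple root $\xi$ of $Q$, the vanishing of the numerators of $f(\xi)+g(-\xi)$ and $f(\xi)-g(-\xi)$ can be written (in the paper's notation, your $\delta$ being the paper's $\gamma'$ and your $\gamma,\gamma'$ being $\gamma_1+\gamma_2$, $\gamma_1-\gamma_2$) as
\[
a\cos(L\xi)-b\sin(L\xi)=-\alpha,\qquad a\sin(L\xi)+b\cos(L\xi)=\beta\xi,\qquad
a:=\gamma_1+\gamma'\,i\xi,\quad b:=i\gamma_2,
\]
and $\cos^2(L\xi)+\sin^2(L\xi)=1$ then gives $a^2+b^2=\alpha^2+\beta^2\xi^2$. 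This is a degree-two polynomial identity valid at the three distinct roots of $Q$, hence identically, and it yields $\beta^2=-(\gamma')^2$, $\gamma_1\gamma'=0$, $\gamma_1^2-\gamma_2^2=\alpha^2$ --- exactly the relations from which the paper (Proposition \ref{prop8}) isolates the admissible branches. The asymmetry you worry about is harmless because the coefficient $\gamma_1+\gamma' i\xi$ multiplies $\cos$ in one combination and $\sin$ in the other.

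Second, and decisively, you never perform the elimination you announce; every genuinely hard step is deferred with ``one must''. You do not derive that $\delta\neq0$ forces $\gamma_1=0$, $\beta=\pm i\gamma'$, $\alpha=\pm i\gamma_2$; you do not observe that this produces \emph{four} sign branches, two of which (e.g.\ $\beta=i\gamma'$ with $\alpha=i\gamma_2$) must be excluded by a separate root-location argument (playing $\mu_0+\mu_1+\mu_2=0$ against $L\mu_j=\frac{\pi}{2}+2k_j\pi$, plus a discussion of degenerate subcases); you do not derive the relation $e^{iL\xi}=(i\gamma_2-\xi)/(-\gamma_2+i\xi)$ nor its reduction to $\frac{2i\xi}{ie^{iL\xi}-1}+i\xi=\gamma_2$, which is what actually identifies the two surviving branches with the definitions of $\cG$ and $\cG'$; and the double-root case, which you dismiss as ``the same kind of computation'', requires in the paper a different and substantially longer computation, ending with two real functions $h(L)$, $k(L)$ and a numerical verification that $\inf_{L>0}\left(h(L)^2+k(L)^2\right)>0$. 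So the proposal reproduces the routine reduction and states the correct answer, but replaces the proof of the critical-length identification --- the substance of Theorem \ref{thm11} --- by a plan whose announced method is not the one that works and whose execution is absent.
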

\noindent
{\em Proof of Theorem  \ref{thm11}:} 
We proceed  in two steps as in the proof of Theorem \ref{thm2}.  \\

\noindent
{\sc Step 1. Reduction to a spectral problem.}\\[3mm]
If \eqref{GG10} is false, then one can pick a sequence $(\theta ^0_n, u^0_n)_{n\in \N}$ such that
\be
\label{GG11} 
1= \Vert (\theta ^0_n,u^0_n)\Vert ^2 _{X_2} > n  \int _0^T   | \theta _{n,xx} (t,0)|^2  \, dt. 
\ee
Extracting a subsequence if needed, we can assume that $(\theta ^0_n, u^0_n)\to (\theta ^0, u^0)$ weakly in $X_2$. 
Denote by $(\theta _n, u_n)$ (resp. $(\theta , u)$) the solution of \eqref{A51} with initial data $(\theta ^0_n , u^0_n)$ (resp. 
$(\theta ^0, u^0)$).  
Extracting again a subsequence, we can assume that $(\theta _n^0, u_n^0)\to (\theta ^0, u^0)$ in $X_1$. By \eqref{GG2} and \eqref{GG11}, we
have that $(\theta ^0_n, u^0_n)\to (\theta ^0, u^0)$ strongly in $X_2$. Thus $\Vert (\theta ^0, u^0)\Vert_{X_2}=1$ and 
$\theta _{xx}(\cdot , 0)=0$ a.e. in $(0,T)$. The same argument as the one in Step 1 of the proof of Theorem \ref{thm2} shows
that there exist
 $\lambda \in \C$  and $(\theta ,u)\in X_3\setminus \{ (0,0) \}$ (superscript $0$ dropped for simplicity) which are solution of the spectral problem:
\ba
-u'-u'''=\lambda \theta,&&  x\in (0,L), \label{W101}\\
-\theta ' -\theta '''=\lambda u,&& x\in (0,L), \label{W102}\\
\theta (0)=\theta (L)=\theta '(0)=\theta ''(0)=0,\label{W103}   \\
u(0)=u(L)=u'(L)=0.&& \label{W104}
\ea

\begin{proposition}
\label{prop8}
Let $L>0$. Then there exist $\lambda \in \C$  and $(\theta ,u)\in X_3\setminus \{ (0,0) \}$
solution of \eqref{W101}-\eqref{W104} if and only if $L\in \cN\cup \cG \cup \cG '$. 
\end{proposition}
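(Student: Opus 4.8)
The plan is to follow the two-step scheme of Theorem \ref{thm2} and Proposition \ref{prop88}; the only genuinely new feature is the \emph{asymmetry} of the boundary conditions \eqref{W103}-\eqref{W104} (the extra condition $\theta''(0)=0$ sits at $x=0$ while $u'(L)=0$ sits at $x=L$), and this asymmetry is exactly what produces the transcendental sets $\cG,\cG'$ in place of the set $\cR$ found in Proposition \ref{prop88}. First I extend $\theta,u$ by $0$ outside $[0,L]$ and compute their distributional derivatives from \eqref{W103}-\eqref{W104}: since $\theta''(0)=0$ the singular part produced by $\theta$ lives only at $x=L$, namely $-\theta''(L)\delta_L-\theta'(L)\delta_L'$, while that produced by $u$ is $u''(0)\delta_0-u''(L)\delta_L+u'(0)\delta_0'$. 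Taking Fourier transforms, setting $\lambda=-ip$, $f=\hat\theta+\hat u$ and $g=\hat\theta-\hat u$, I obtain
\begin{eqnarray*}
f(\xi)&=&\frac{i}{\xi^3-\xi+p}\bigl(\alpha+\beta i\xi+(\gamma+\sigma)e^{-iL\xi}+\gamma'(i\xi)e^{-iL\xi}\bigr),\\
g(-\xi)&=&\frac{i}{\xi^3-\xi+p}\bigl(\alpha-\beta i\xi+(\gamma-\sigma)e^{iL\xi}+\gamma'(i\xi)e^{iL\xi}\bigr),
\end{eqnarray*}
with $\alpha:=u''(0)$, $\beta:=u'(0)$, $\gamma:=-u''(L)$, $\sigma:=-\theta''(L)$, $\gamma':=-\theta'(L)$. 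By Paley--Wiener, \eqref{W101}-\eqref{W104} has a nontrivial solution iff there exist $p\in\C$ and $(\alpha,\beta,\gamma,\sigma,\gamma')\neq0$ making $f,g$ entire; equivalently, the roots $\mu_0,\mu_1,\mu_2$ of $Q(\xi)=\xi^3-\xi+p$ must annihilate both numerators, with multiplicity. In contrast with \eqref{C241}-\eqref{C242}, here $\alpha=\alpha'$ but the coefficients $\gamma\pm\sigma$ of $e^{\mp iL\xi}$ differ, which is the source of $\cG,\cG'$. As in Corollary \ref{cor2}, $Q$ cannot have a triple root.

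When the $\mu_j$ are simple, each kills both numerators; multiplying the two vanishing conditions at $\mu_j$ and using $e^{-iL\mu_j}e^{iL\mu_j}=1$ gives $(\gamma+\gamma'i\mu_j)^2-\sigma^2=\alpha^2+\beta^2\mu_j^2$, a quadratic in $\mu_j$ satisfied by three distinct values, so its coefficients vanish:
\[
\beta^2+\gamma'^2=0,\qquad \gamma\gamma'=0,\qquad \gamma^2=\alpha^2+\sigma^2.
\]
If $\gamma'=0$ then $\beta=0$ and $f=\frac{i}{Q}(\alpha+(\gamma+\sigma)e^{-iL\xi})$; Lemma \ref{lemR1} shows such an $f$ (and then $g$, by adjusting $\gamma-\sigma$ on the real roots) can be made entire iff $L\in\cN$, which yields the $\cN$ part. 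If $\gamma'\neq0$, I normalize $\gamma'=1$, whence $\gamma=0$, $\beta=\pm i$ and $\alpha=\pm i\sigma$. For the two sign choices $(\beta,\alpha)=(i,i\sigma)$ and $(-i,-i\sigma)$ the $f$-numerator factors as $(\mu-i\sigma)(ie^{-iL\mu}\mp1)$, forcing $e^{-iL\mu_j}=\mp i$ at every root $\mu_j\neq i\sigma$; since such roots are real and $\mu_0+\mu_1+\mu_2=0$, not all three can satisfy this, so exactly one root equals $i\sigma$. Evaluating the \emph{second} numerator there then forces $e^{iL\mu_0}=\pm i$, whereas $L\mu_0=-L(\mu_1+\mu_2)\in\pi(1+2\Z)$ gives $e^{iL\mu_0}=-1$: these branches yield no solution. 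It is essential here to test \emph{both} $f$ and $g$, since the $f$-condition alone would spuriously reintroduce lengths of $\cR$ type.

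The remaining choices $(\beta,\alpha)=(i,-i\sigma)$ and $(-i,i\sigma)$ give $e^{-iL\mu_j}=\pm\frac{\mu_j+i\sigma}{i(\mu_j-i\sigma)}$; solving for $\sigma$ and writing $a_j:=iL\mu_j$ I find $\sigma=\frac1L\bigl(\frac{2a_j}{\pm i e^{a_j}-1}+a_j\bigr)$, so that $\sigma$ being independent of $j$ is precisely $F(a)=F(b)=F(-a-b)$ with $F(z)=\frac{2z}{\pm i e^z-1}+z$, $a=a_0$, $b=a_1$, $-a-b=a_2$ (using $\sum\mu_j=0$), while the relation $\mu_0^2+\mu_0\mu_1+\mu_1^2=1$ coming from $Q$ reads $L^2=-(a^2+ab+b^2)$. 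The nondegeneracy $\sigma\neq0$ becomes the ``$\neq0$'' in the definitions, so these branches produce exactly $L\in\cG$ (sign $+i$) and $L\in\cG'$ (sign $-i$); conversely each such $L$ furnishes entire $f,g$, hence a nontrivial solution. Finally, if $Q$ has a double root, then $\{\mu_0,\mu_2\}=\pm\{\tfrac1{\sqrt3},-\tfrac2{\sqrt3}\}$ and I impose vanishing of both numerators at $\mu_2$ and of both numerators and their first derivatives at $\mu_0$; a computation analogous to Corollary \ref{cor2} reduces this overdetermined trigonometric system to a polynomial identity in $\sin^2(L/(2\sqrt3))$ whose admissible roots force $L=0$, so the double-root case contributes nothing. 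Combining the three cases gives $L\in\cN\cup\cG\cup\cG'$.

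The main obstacle is the $\gamma'\neq0$ analysis of the previous two paragraphs: one must keep track of both numerators to eliminate the two degenerate sign-branches (which would otherwise resurrect $\cR$), and then carry out the algebra identifying $\sigma=\frac1L(\frac{2a}{\pm i e^{a}-1}+a)$ with the exact defining relation of $\cG$ and $\cG'$. The double-root computation is routine but lengthy, exactly as in Corollary \ref{cor2}.
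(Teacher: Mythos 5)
Your proposal follows the paper's proof along essentially the same lines: extension by zero, Fourier transform and Paley--Wiener reduction to the entirety of the two functions \eqref{C241}-type numerators (your $f,g$ agree with the paper's \eqref{W241}--\eqref{W242} with $\gamma=\gamma_1$, $\sigma=\gamma_2$), the product trick giving $\beta^2+\gamma'^2=\gamma\gamma'=0$ and $\gamma^2=\alpha^2+\sigma^2$, the identification of the branch $\gamma'=0$ with $\cN$ via Lemma \ref{lemR1}, the elimination of the two degenerate sign branches by testing both numerators (your remark that the $f$-condition alone would resurrect $\cR$-type lengths is exactly right), and the change of variables $a_j=iL\mu_j$ turning the surviving branches into the defining relations of $\cG$ and $\cG'$; your formula $\sigma=\frac1L\bigl(\frac{2a_j}{\pm ie^{a_j}-1}+a_j\bigr)$ coincides with the paper's constant $\gamma_2$. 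One loose end in this part: for the ``only if'' direction you must also rule out $\sigma=0$ in the surviving branches (otherwise a solution could exist without $L\in\cG\cup\cG'$). This is a two-line fix, as in the paper: if $\sigma=0$ every nonzero root satisfies $e^{iL\mu_j}=\pm i$, which contradicts $\mu_0+\mu_1+\mu_2=0$ unless $p=0$, and then the roots $\pm1$ give $e^{iL}e^{-iL}=-1$. A similar remark applies to your contradiction ``$e^{iL\mu_0}=\pm i$ versus $e^{iL\mu_0}=-1$'' in the degenerate branches, whose first half silently assumes the factor $2\sigma\neq0$; for $\sigma=0$ one gets instead $\mu_0=0$ and $e^{iL\mu_0}=1\neq-1$.

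The genuine gap is the double-root case. You assert that ``a computation analogous to Corollary \ref{cor2}'' reduces the overdetermined system to ``a polynomial identity in $\sin^2(L/(2\sqrt3))$ whose admissible roots force $L=0$.'' That is what happens in Corollary \ref{cor2}, but it is not what happens here, and the analogy breaks down for a structural reason: the boundary conditions \eqref{W101}--\eqref{W104} produce \emph{five} coefficients $(\alpha,\beta,\gamma_1,\gamma_2,\gamma')$ rather than the four of Corollary \ref{cor2}, so after normalizing $\beta=1$ and solving the derivative conditions \eqref{WW23}--\eqref{WW24} at the double root, the two remaining compatibility conditions \eqref{WW21}=\eqref{WW25} and \eqref{WW22}=\eqref{WW26} become $h(L)=0$ and $k(L)=0$, where $L$ enters both polynomially and inside the trigonometric functions (terms like $L\cos(2L/\sqrt3)$, $L^{-1}\cos^2(L/\sqrt3)$, $(\frac1{2L}+L)\sin(2L/\sqrt3)$). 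The elimination of $L$ that succeeded in Corollary \ref{cor2} (via \eqref{F33}--\eqref{F34}, where $L$ appears linearly) is not available, and the paper instead disposes of this case by noting $h(L)^2+k(L)^2=L^2+O(L)$ as $L\to+\infty$ and verifying \emph{numerically} that $\inf_{L>0}\bigl(h(L)^2+k(L)^2\bigr)\approx 5.33>0$. So the step as you describe it would fail; it must be replaced by this genuinely different (and not purely algebraic) argument, or by some other proof that $h$ and $k$ have no common positive zero.
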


\begin{proof}
We use the same notations as those in  the proof of Proposition \ref{prop88}.  
Consider the property:
\be
\label{W200}
\exists \lambda\in \C, \ \exists (\theta , u)\in X_3\setminus \{ (0,0)\}, \quad \textrm{ \eqref{W101}-\eqref{W104} hold}. 
\ee
Note that \eqref{W101}-\eqref{W102} yields
\begin{eqnarray}
\lambda \theta + u'+u''' &=&u''(0)\delta _0 - u''(L)\delta _L + u'(0)\delta _0' \quad
\textrm{ in } {\mathcal S} ' (\R ),    \label{W201} \\
\lambda u + \theta '+\theta ''' &=& - \theta ''(L)\delta _L  - \theta '(L) \delta _L  '    \quad \textrm{ in } {\mathcal S} ' (\R ). \label{W202}
\end{eqnarray}

Conversely, if a pair $(\theta, u)$ in $L^2(\R ) ^2$ with $\textrm{Supp } \theta\cup \textrm{Supp }u\subset [0,L]$  satisfies \eqref{W201}-\eqref{W202} for some $\lambda\in\C$ and some complex coefficients
(instead of $u''(0), u''(L),$ etc.) in the r.h.s. of \eqref{W201}-\eqref{W202}, then $\theta , u\in H^3(0,L)$ and 
\eqref{W101}-\eqref{W104} hold.  

Let $ f (\xi) :=\hat \theta (\xi ) + \hat u(\xi )$ and $g(\xi ) := \hat \theta (\xi ) -\hat u (\xi )$. Then \eqref{W201}-\eqref{W202}
yield
\ba
\lambda \hat \theta (\xi ) + (i\xi + (i\xi )^3) \hat u (\xi ) &=&  u''(0) -u''(L)e^{-i L\xi }  + u'(0) i\xi , \label{W221}\\
\lambda \hat u (\xi ) + (i\xi + (i\xi )^3 ) \hat \theta (\xi ) &=&  - \theta '' (L) e^{-iL\xi }  -i\xi \theta '(L) e^{-i L \xi } , \label{W222}  
\ea
and hence 
\ba
f(\xi ) &=& \frac{1}{-i\xi ^3 +i\xi +\lambda} \left(
u''(0)  - (u''(L) + \theta ''(L) ) e^{-iL\xi}   + u'(0) i\xi   -i\xi \theta ' (L) e^{-iL\xi}   \right),\\
g(\xi ) &=&   \frac{1}{i\xi ^3 - i \xi +\lambda} 
\left( u''(0)  +(-u''(L) +\theta ''( L) ) e^{-iL\xi }  + u'(0)i\xi  +  i\xi \theta ' (L) e^{-iL\xi}   \right) .
\ea
Set $\lambda =-ip$. 
By Paley-Wiener theorem, we conclude that \eqref{W200} is equivalent to the existence of numbers
$p\in \C$ and 
$(\alpha , \beta, \gamma _1, \gamma _2 ,  \gamma ' )\in \C ^5 \setminus \{ (0,0,0,0,0) \} $ such that the two functions defined for
$\xi\in \R$ by
\ba
f(\xi ) &=& \frac{i}{\xi ^3 -\xi +p} \left( \alpha  + \beta  i\xi +( \gamma _1 + \gamma _2 )  e^{-i L \xi}  +\gamma '(i\xi)  e^{-iL\xi}   \right), \label{W241} \\
g(-\xi ) &=&   \frac{i}{\xi ^3 - \xi +p} \left( \alpha   - \beta i\xi  + ( \gamma _1 - \gamma _2)  e^{i L \xi}  + \gamma ' (i\xi)  e^{iL\xi}   \right) \label{W242}  
\ea
fulfill the conditions
\ba
&&f\textrm{ and } g \textrm{ are entire}; \label{W301}\\
&& (f,g)\in L^2(\R )^2 ; \label{W302} \\
&&\exists (C,N)\in (0,+\infty )\times \N ,  \ \forall \xi \in \C \quad | f(\xi ) | + |g(\xi ) |  \le C (1+ |\xi |)^N e^{L | \textrm{Im } \xi | } .\label{W303}
\ea
(We have set $\alpha : = u''(0)$, $\beta := u'(0) $, $\gamma _1 := - u''(L)$, $\gamma _2 := -\theta '' ( L)$ and
$\gamma ' := -\theta '(L)$.) It is clear that for $f$ and $g$ as in \eqref{W241}-\eqref{W242}, the condition \eqref{W301} implies both \eqref{W302} and \eqref{W303} (with $N=1$). 
Clearly, \eqref{W301} holds if and only if the two following functions 
\ba
f(\xi ) +  g ( -\xi  )&=& \frac{i}{\xi ^3 -\xi +p} \left( 2 \alpha  - 2 \gamma _2 i \sin (L\xi ) + 2 (\gamma _1 +\gamma ' i \xi) \cos (L\xi )    \right),\label{W401}\\
f(\xi ) -  g ( -\xi ) &=& \frac{i}{\xi ^3 -\xi +p} \left( 2 \beta  i\xi  +2\gamma _2 \cos (L\xi )  - 2i (\gamma _1 + \gamma ' i\xi )  \sin (L\xi )    \right)  \label{W402}
\ea
are entire. 
Let $Q(\xi )=\xi ^3 -\xi +p$ and let $\mu _0,\mu _1, \mu _2$ denote the roots of $Q$. The polynomial function $Q$ cannot have a triple root, so either 
the roots $\mu_k$, $k=0,1,2$, are simple, or there  are a double root $\mu_0=\mu_1$ and a simple root $\mu_2\ne \mu_0$.  \\

\noindent
1. Assume that $Q$ has three simple roots. Let $\xi$ be a root of $Q$. Setting 
\[
a:=\gamma _1 + \gamma ' i\xi, \quad b := i \gamma _2, 
\]
we infer from the fact that the numerators of the  r.h.s. of (\ref{W401}) and (\ref{W402}) vanish at $\xi$ that 
\[
\left( \begin{array}{cc} a& -b \\ b& a \end{array} \right) \, 
\left( \begin{array}{c} \cos L\xi \\ \sin L\xi \end{array} \right)
=
\left(
\begin{array}{c} 
-\alpha \\
\beta \xi
\end{array} 
 \right)  \cdot 
\] 
This yields 
\[
\cos (L\xi) = \frac{-\alpha a + b\beta \xi}{a^2+b^2}, \quad \sin (L\xi ) = \frac{a\beta \xi + \alpha b}{a^2+b^2}\cdot 
\]
Expanding in the identity $\cos ^2(L\xi ) + \sin ^2 (L\xi )=1$ gives $a^2+b^2=\alpha ^2 + \beta ^2 \xi ^2$, i.e. 
\[
-(\gamma ' )^2 \xi ^2 + 2\gamma  _1 \gamma ' i \xi + \gamma _1^2 -\gamma  _2^2 = \alpha ^2 + \beta ^2 \xi ^2. 
\]
Since $Q$ has three different roots, we arrive at 
\begin{eqnarray}
-(\gamma ')^2 &=& \beta ^2 , \\
\gamma _1 \gamma '&=& 0, \\
\gamma _1^2 - \gamma _2 ^2 &=& \alpha ^2. 
\end{eqnarray}
a. If $\gamma '=0$, then $\beta =0$, and the existence of $p\in \C$ and of  $(\alpha, \gamma _1, \gamma _2)\in 
\C ^3\setminus \{ (0,0,0) \}$  such that both $f(\xi)$ and $g(-\xi)$ are analytic in $\C$ is equivalent to the fact that
$L\in {\mathcal N}$, by Lemma \ref{lemR1}.  \\
b. If $\gamma ' \ne 0$, then $\gamma _1=0$, $\beta =\pm i\gamma ' \ne 0$ and $\alpha =\pm i\gamma _2$. \\[3mm]
{\sc Case 1. } $\beta = i\gamma '$ and $\alpha = i\gamma _2$. \\
Then 
\[
f(\xi ) = \frac{i}{\xi ^3 -\xi +p} (i+e^{-iL\xi})(\gamma _2 + \gamma ' (i\xi) ), \quad 
g(-\xi ) = \frac{i}{\xi ^3 - \xi +p}  (i-e^{iL\xi})   (\gamma _2 -\gamma ' (i\xi)).   
\] 
If, each root $\mu _j$ ($0\le j\le 2$) of $Q$ is such that  $\gamma _2 + i\mu _j \gamma '\ne 0$, then 
\[
e^{-iL\mu _j} = -i= e ^{-i\frac{\pi}{2}},
\]
and hence $L\mu _j= \frac{\pi}{2} + 2k_j\pi$, for some $k_j\in \Z$. This contradicts the fact that  $\mu _0+\mu _1+\mu _2=0$.
Therefore, there exists some root $\mu_j$ of $Q$, say $\mu _0$, such that $\gamma _2 + i\mu _0\gamma '=0$.   In 
 the same way, we can prove that some root $\mu_k$ of $Q$ should satisfy $\gamma _2 -i\mu _k \gamma '=0$. 
 If $\mu _k=\mu _0$, then $\gamma _2=\mu _0=0$, and hence $p=0$ and $\{ \mu _1, \mu_2 \} =\{ \pm 1 \}$. 
 Then we should have $i+e^{- i L\mu _1}= i+e^{ -  i L \mu _2}=0$, i.e $e^{i L}=e^{-  i L}= -i$, and this is impossible. 
 If $\mu _k\ne \mu_0$, say $\mu_k =\mu _1$, then  $\mu _1 = \gamma _2 /(i\gamma ' ) = -\mu _0$, and thus 
 $\mu_2 =-(\mu _0+ \mu _1)=0$. But then $i+ e^{-iL\mu_2}=i\ne 0$, and $\mu _2$ is a pole of $f$, which yields a contradiction. \\
 
\noindent 
{\sc Case 2. } $\beta = -i\gamma '$ and $\alpha = -i\gamma _2$. \\
It is similar to Case 1 and is impossible. \\

\noindent
{\sc Case 3. } $\beta = i\gamma '$ and $\alpha = -i\gamma _2$. \\
Then 
\ba
f(\xi ) &=& \frac{i}{\xi ^3 -\xi +p} \left( -i\gamma _2  +  i\gamma ' (i\xi ) +  \gamma _2   e^{-i L \xi}  +\gamma '(i\xi)  e^{-iL\xi}   \right), \label{W501} \\
g(-\xi ) &=&   \frac{i}{\xi ^3 - \xi +p} \left( -i\gamma _2    - i\gamma ' (  i\xi )   - \gamma _2  e^{i L \xi}  + \gamma ' (i\xi)  e^{iL\xi}   \right)  \cdot \label{W502}  
\ea

\noindent
If $\gamma _2=0$, then 
\[
f (\xi ) + g(-\xi ) = \frac{-2 \gamma ' } {\xi ^3 -\xi + p} \xi \cos (L\xi ), \quad 
f(\xi ) -g(-\xi ) = -\frac{2\gamma '  i \xi }{\xi ^3 -\xi +p }(1-\sin  (L\xi )). 
\]
If $p\ne 0$, then $0\not\in \{\mu _0, \mu _1,\mu _2\}$ and $\cos (L\mu _j)=0$ yields $L\mu _j =(\pi /2)+ 2 k_j\pi$ 
with $k_j\in \Z$ for $k=0,1,2$, contradicting  the fact that $\mu_0 + \mu _1 + \mu_2=0$. \\
If $p=0$, then  we have (after relabeling) $\mu _0=0$, $\mu _1=1$ and $\mu _2=-1$. Then 
both $\mu_1$ and $\mu _2=-\mu _1$ should solve $1-\sin (L\xi)=0$, which is impossible. We conclude that 
$\gamma _2 \ne 0$ when $\beta = i\gamma ' \ne 0$ and $\alpha = -i\gamma _2$. 

Dividing in  (\ref{W501})-(\ref{W502}) by $\gamma'$, 
we can assume that $\gamma '=1$. Each root $\xi$ of $Q$ should satisfy
\ba
\gamma _2 (-i + e^{-iL \xi } ) + i\xi (i+ e^{-iL\xi }) &=&0, \label{WW1}\\
\gamma _2 (-i-e^{iL\xi}) +  i \xi (-i+ e^{iL\xi})  &=& 0. \label{WW2}
\ea
Then system (\ref{WW1})-(\ref{WW2}) is equivalent to
\ba
e^{-iL\xi } &=& \frac{i\gamma _2 +\xi}{\gamma _2 + i\xi},  \label{WW3}\\
e^{iL\xi}   &=& \frac{i\gamma _2 - \xi}{-\gamma _2 + i\xi} \cdot \label{WW4}
\ea
(Note that both $\gamma _2+i\xi $ and $-\gamma _2 +i\xi $ are different from $0$, otherwise we would infer from 
\eqref{WW1}-\eqref{WW2}
 that $\gamma _2=0$.) We notice that (\ref{WW3}) and (\ref{WW4}) are equivalent, so that we can focus on (\ref{WW4}). Using basic algebra, we have that 
\begin{eqnarray*}
e^{iL\xi} =\frac{i\gamma _2-\xi}{-\gamma _2 +i\xi} 
&\iff& ie^{iL\xi } = 1+\frac{2i\xi}{\gamma _2 -i\xi} \\
&\iff & \frac{2i\xi}{ie^{iL\xi} -1} +i\xi =\gamma _2 =Const. 
\end{eqnarray*}
Thus, the existence of $p\in \C$, $\gamma _2\in \C ^*$ and $\mu_0,\mu_1, \mu_2$ such that (\ref{E4})-(\ref{E6}) hold together with 
\[
\frac{2i\mu _0}{ie^{iL\mu _0} -1} + i\mu _0 = 
\frac{2i\mu _1}{ie^{iL\mu _1} -1} + i\mu _1 =
\frac{2i\mu _2}{ie^{iL\mu _2} -1} + i\mu _2 
=\gamma _2
\] 
is reduced, by letting $a:=iL\mu _0$, $b:=iL\mu _1$, to the existence of $a,b\in \C$ such that
\begin{eqnarray*}
\frac{2a}{ie^a-1} + a = \frac{2b}{ie^b -1} + b = \frac{ 2(-a-b)}{ie ^{-a-b} -1} -a-b\ne 0, \\
L^2= -(a^2+ab+b^2).
\end{eqnarray*}
Thus, this is possible if and only if $L\in \cG$.  \\

\noindent
{\sc Case 4. } $\beta = -i\gamma '$ and $\alpha = i\gamma _2$. \\
It is similar to Case 3. We find, instead of (\ref{WW3})-(\ref{WW4}), the system
\begin{eqnarray*}
-e^{-iL\xi } &=& \frac{i\gamma _2 +\xi}{\gamma _2 + i\xi},  \\
-e^{iL\xi}   &=& \frac{i\gamma _2 - \xi}{-\gamma _2 + i\xi}
\end{eqnarray*}
which can be reduced to its second equation equivalent to
\[
\frac{2i\xi}{-ie^{iL\xi} -1} +i\xi =\gamma _2 =Const. 
\]
Introducing again $a:=iL\mu _0$, $b=iL\mu _1$, we see that the existence of $p\in \C$, $\gamma _2\in \C ^*$ 
and $\mu_0,\mu_1, \mu_2$ (the roots of $Q$) holds if and only if $L\in \cG '$. \\

\noindent
2. Assume that $Q$ has a double root $\mu_0=\mu_1$ and a simple root $\mu_2\ne \mu_0$.  Then 
$(\mu_0,\mu_2)=\pm (\frac{1}{\sqrt{3}}, -\frac{2}{\sqrt{3}})$. We shall consider the case  
$(\mu_0,\mu_2)= (\frac{1}{\sqrt{3}}, -\frac{2}{\sqrt{3}})$, the case $(\mu_0,\mu_2)= (-\frac{1}{\sqrt{3}}, \frac{2}{\sqrt{3}})$
being similar. As $\mu _0$ is a double root of $Q$, it should be a root of the numerators of the functions $f( \xi ) + g  ( - \xi )$ 
and $f(\xi ) -g (-\xi )$ (see
\eqref{W401} and \eqref{W402}) and of their first derivatives, so that 
\ba
-\gamma _2 i\sin( \frac{L}{\sqrt{3}}) +
 (\gamma _1 + \gamma'  \frac{i}{\sqrt{3}}) \cos (\frac{L}{\sqrt{3}} ) &=& - \alpha , \label{WW21}\\
\gamma _2 \cos (\frac{L}{\sqrt{3}}) 
-i (\gamma _1 + \gamma ' \frac{i}{\sqrt{3}} ) \sin (\frac{L}{\sqrt{3}}) &=& -\beta \frac{i}{\sqrt{3}}  ,\label{WW22}\\
(-\gamma _2 iL +\gamma 'i ) \cos(\frac{L}{\sqrt{3}})  - L(\gamma _1 +  \frac{ i\gamma '}{\sqrt{3}} ) \sin (\frac{L}{\sqrt{3}}) &=& 0, \label{WW23}\\ 
 (\gamma '  -   \gamma _2 L )  \sin (\frac{L}{\sqrt{3}}) 
 - i (\gamma _1 + \frac{i\gamma '}{\sqrt{3}} )  L \cos (\frac{L}{\sqrt{3}}) &=& -\beta i .\label{WW24}
\ea 
As $\mu _2$ is a simple root of $Q$, it has to be a root of the numerators of the functions $f(\xi ) +g(- \xi)$ and  $f(\xi ) -g( -\xi )$, so that 
\ba
-\gamma _2 i \sin (-\frac{2L}{\sqrt{3}})  + (\gamma _1- \gamma '  \frac{2i}{\sqrt{3}} )  \cos (\frac{2L}{\sqrt{3}}) &=& -\alpha , \label{WW25}\\
 \gamma _2 \cos (\frac{2L}{\sqrt{3}} ) 
-i (\gamma _1 -  \gamma ' \frac{2i}{\sqrt{3} } ) \sin (-2 \frac{L}{\sqrt{3}}) &=&  
2\frac{\beta i}{\sqrt{3}} .\label{WW26}
\ea
Note that \eqref{WW23}-\eqref{WW24} can be rewritten as 
\[
\left( 
\begin{array}{cc}
\cos (\frac{L}{\sqrt{3}})  & - \sin  (\frac{L}{\sqrt{3}})  \\[3mm]
\sin (\frac{L}{\sqrt{3}}) & \cos (\frac{L}{\sqrt{3}})  
\end{array}
\right)  \ 
\left(
\begin{array}{c}
-\gamma _2 iL +  i  \gamma'   \\
L (\gamma _1 +   i \frac{ \gamma '}{ \sqrt{3} } ) 
\end{array}
\right) 
= \left(
\begin{array}{c}0 \\ 
\beta  
 \end{array}  
 \right)  \cdot
\]
If $\beta=0$, then we obtain $-\gamma _2 iL+ i\gamma '=\gamma _1 + i\frac{\gamma '}{\sqrt{3}}=0$, and 
\eqref{WW22} gives $\gamma _2 \cos(L/\sqrt{3})=0$. 
Then either $\cos (L/\sqrt{3} )\ne 0$ and $\gamma _2=0$, or $\cos ( L/ \sqrt{3} ) =0$ and  \eqref{WW26} 
gives again $\gamma _2=0$.  
Thus $0=\gamma _2=\gamma'=\gamma _1$, and \eqref{WW21} yields $\alpha=0$, so that 
$(\theta , u)=(0,0)$, which is impossible. 

Assume now that $\beta\ne 0$. Picking $\beta =1$, we obtain 
\ba
-\gamma _2 iL + i\gamma' &=&  \sin( \frac{L}{\sqrt{3} }  ), \label{WW31}\\
L( \gamma  _ 1 +  i\frac{\gamma '}{\sqrt{3} } ) &=& \cos (\frac{L}{\sqrt{3} } )  \cdot \label{WW32}
\ea
We can compute all the coefficients in \eqref{WW21}-\eqref{WW26} in terms of $L$. Indeed, we infer from 
\eqref{WW22} and \eqref{WW32} that 
$\cos (L/ \sqrt{3})\ne 0$ and 
\[
\gamma _2 =\frac{i}{L} \sin (\frac{L}{\sqrt{3}}) -\frac{i}{\sqrt{3} \cos (\frac{L}{\sqrt{3} } )} \cdot
\]
Combined with \eqref{WW31}, this yields 
\[
\gamma ' = L\gamma _2 -i\sin ( \frac{L}{ \sqrt{3}} ) = -\frac{iL}{\sqrt{3} \cos (\frac{L}{\sqrt{3}})} \cdot
\]
It follows that 
\[
\gamma _1-\gamma ' \frac{2i}{\sqrt{3}} = (\gamma _1 + \frac{i\gamma '}{\sqrt{3}})-\sqrt{3} i \gamma ' 
= \frac{1}{L} \cos ( \frac{L}{ \sqrt{3}} ) - \frac{ L }{ \cos ( \frac{L}{\sqrt{3} } ) }\cdot
\]
From \eqref{WW21} and \eqref{WW25}, we infer that 
\be
\label{WW100}
-\gamma _2 i\sin( \frac{L}{\sqrt{3}}) +
 (\gamma _1 + \gamma'  \frac{i}{\sqrt{3}}) \cos (\frac{L}{\sqrt{3}} ) 
 =\gamma _2 i \sin (\frac{2L}{\sqrt{3}})  + (\gamma _1- \gamma '  \frac{2i}{\sqrt{3}} )  \cos (\frac{2L}{\sqrt{3}}) .
\ee
Substituting the values of $\gamma_2$, $\gamma _1 + \gamma'  \frac{i}{\sqrt{3}} $ and 
$\gamma _1 - \gamma'  \frac{2i}{\sqrt{3}}$ in \eqref{WW100}, we obtain the equation 
\begin{multline*}
i\left( \frac{i}{L} \sin (\frac{L}{\sqrt{3}})  - \frac{i}{\sqrt{3} \cos (\frac{L}{\sqrt{3}})}  \right)\cdot 
\left( \sin (\frac{L}{\sqrt{3}} ) + \sin (\frac{2L}{\sqrt{3}}) \right) + \left( \frac{1}{L} \cos(\frac{L}{\sqrt{3}}) 
-\frac{L}{\cos (\frac{L}{\sqrt{3}})} \right) \cos (\frac{2L}{\sqrt{3}}) \\
-\frac{1}{L} \cos ^2 (\frac{L}{\sqrt{3}})=0.   
\end{multline*}
Simplifying the above equation, we arrive to 
\[
\underbrace{\frac{1}{L} \cos (\frac{L}{\sqrt{3}}) \left( \cos (\frac{L}{\sqrt{3}}) (1-4\sin ^2(\frac{L}{\sqrt{3}})) -1\right) 
-L\cos(\frac{2L}{\sqrt{3}}) + \frac{1}{\sqrt{3}} \left( \sin (\frac{L}{\sqrt{3}}) + \sin (\frac{2L}{\sqrt{3}}) \right)}_{h(L)} =0. 
\]

From \eqref{WW22} and \eqref{WW26}, we infer that 
\be
\label{WW101}
-2\gamma _2 \cos (\frac{L}{\sqrt{3}}) +
2i (\gamma _1 + \gamma'  \frac{i}{\sqrt{3}}) \sin (\frac{L}{\sqrt{3}} ) 
 =\gamma _2 \cos (\frac{2L}{\sqrt{3}})  + i(\gamma _1- \gamma '  \frac{2i}{\sqrt{3}} )  \sin (\frac{2L}{\sqrt{3}}) .
\ee
Substituting the values of $\gamma_2$, $\gamma _1 + \gamma'  \frac{i}{\sqrt{3}} $ and 
$\gamma _1 - \gamma'  \frac{2i}{\sqrt{3}}$ in \eqref{WW101}, we obtain the equation 
\begin{multline*}
\left( \frac{i}{L} \sin (\frac{L}{\sqrt{3}})  - \frac{i}{\sqrt{3} \cos (\frac{L}{\sqrt{3}})}  \right)\cdot 
\left( \cos (\frac{2L}{\sqrt{3}} ) + 2\cos (\frac{L}{\sqrt{3}}) \right) + i  \left( \frac{1}{L} \cos(\frac{L}{\sqrt{3}}) 
-\frac{L}{\cos (\frac{L}{\sqrt{3}})} \right) \sin (\frac{2L}{\sqrt{3}}) \\
-\frac{2i}{L} \cos (\frac{L}{\sqrt{3}}) \sin (\frac{L}{\sqrt{3}}) =0.   
\end{multline*}
After some simplifications, we arrive to 
\[
\underbrace{(\frac{1}{2L} +L)  \sin (\frac{2L}{\sqrt{3}}) + 
\frac{1}{\sqrt{3}} \left( \cos (\frac{2L}{\sqrt{3}}) + 2 \cos (\frac{L}{\sqrt{3}}) \right)}_{k(L)} =0. 
\]
Let $\zeta (L):=h(L)^2+k(L)^2$. Then $\zeta (L)=L^2+O(L)$ as $L\to +\infty$, 
so that the equation $\zeta (L)=0$ has no root $L\gg 1$. 
Using a numerical computation,  we see that $\inf_{L>0} \zeta(L)\approx 5.3333>0$. Thus
the system 
\[
\left\{ 
\begin{array}{l}
h(L)=0, \\
k(L)=0
\end{array}
\right.
\] 
has no solution $L>0$.  Therefore, there is no critical length in the case of a double
root.  The proof of Proposition \ref{prop8} and of Theorem \ref{thm11} is complete. 
\end{proof}

Next, we investigate the spectral problem associated with case 12.
\begin{corollary}
Let $L>0$. Then there exist $\lambda \in \C$  and $(\theta ,u)\in X_3\setminus \{ (0,0) \}$
solution of \eqref{W101}-\eqref{W104} together with $u''(L)=0$  if and only if $L\in  \cG \cup \cG '$. 
\end{corollary}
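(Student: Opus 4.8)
The plan is to reuse verbatim the notation and the entire-function reduction already carried out in the proof of Proposition~\ref{prop8}, and simply to track the effect of the single extra boundary condition $u''(L)=0$. Recall that there one sets $\gamma_1:=-u''(L)$, so the additional requirement $u''(L)=0$ is \emph{exactly} the constraint $\gamma_1=0$. Since imposing $\gamma_1=0$ can only shrink the set of admissible coefficient tuples $(\alpha,\beta,\gamma_1,\gamma_2,\gamma')$ for which $f$ and $g$ are entire, the set of critical lengths for the present problem is automatically a subset of $\cN\cup\cG\cup\cG'$ produced by Proposition~\ref{prop8}. It therefore suffices to decide, for each of the three pieces $\cN$, $\cG$, $\cG'$, whether a nontrivial solution survives under the constraint $\gamma_1=0$.

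First I would treat the branch $\gamma'\ne 0$, which is exactly where $\cG$ and $\cG'$ arise (Cases~3 and~4 in the proof of Proposition~\ref{prop8}). There the relation $\gamma_1\gamma'=0$ coming from the three-simple-roots analysis already forces $\gamma_1=0$; hence the extra condition is automatically satisfied, and the very same constructions yielding $L\in\cG$ (Case~3, with $\beta=i\gamma'$, $\alpha=-i\gamma_2$) and $L\in\cG'$ (Case~4, with $\beta=-i\gamma'$, $\alpha=i\gamma_2$) remain valid without change. Consequently every $L\in\cG\cup\cG'$ is still a critical length for the constrained problem.

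The main point, and the one genuinely delicate step, is therefore to rule out the branch $\gamma'=0$, which is the one corresponding to $L\in\cN$. Here $\beta=0$, and writing $C:=e^{-iL\mu_0}=e^{-iL\mu_1}=e^{-iL\mu_2}$ with $|C|=1$ (the roots $\mu_k$ being real), the requirement that $f(\xi)$ and $g(-\xi)$ be entire reduces to
\[
\alpha+(\gamma_1+\gamma_2)C=0,\qquad \alpha+(\gamma_1-\gamma_2)\bar C=0 .
\]
Imposing $\gamma_1=0$ and subtracting the two relations gives $\gamma_2\,(C+\bar C)=2\gamma_2\cos(L\mu_0)=0$. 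Using the explicit value $L\mu_0=-\frac{2\pi}{3}(2k+l)$ from \eqref{G1}, one has $\cos(L\mu_0)=\cos\!\big(\frac{2\pi}{3}(2k+l)\big)$, and this is never zero: $\cos\theta=0$ would force $4(2k+l)=3(2m+1)$ for some $m\in\Z$, with the left side even and the right side odd. Hence $\gamma_2=0$, and then $\alpha=0$, so the solution is trivial. Thus no $L\in\cN$ is critical for the constrained problem.

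Finally, the case of a double root of $Q$ produced no critical length at all in Proposition~\ref{prop8}, and adding the constraint $\gamma_1=0$ cannot create one. Combining the three parts, the constrained spectral problem \eqref{W101}--\eqref{W104} together with $u''(L)=0$ admits a nontrivial solution if and only if $L\in\cG\cup\cG'$, as claimed. The arithmetic exclusion of $\cN$ is the only subtle ingredient, but it is identical in spirit to the parity computations already used in Corollaries~\ref{cor3} and~\ref{cor4}.
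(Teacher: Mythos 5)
Your proposal is correct and takes essentially the same route as the paper's own proof: both reduce to the coefficient relations from Proposition \ref{prop8}, note that $u''(L)=0$ is exactly $\gamma_1=0$, which is automatically satisfied in the cases producing $\cG\cup\cG'$ (so those lengths survive), and then kill the $\gamma'=0$ branch corresponding to $\cN$ by a parity contradiction. The only cosmetic difference is in that last step: the paper derives $e^{2iL\mu_j}=-1$ for every root and contradicts $\mu_0+\mu_1+\mu_2=0$, whereas you invoke the explicit root structure \eqref{G1} and show $\cos(L\mu_0)\neq 0$; these are the same arithmetic obstruction in slightly different form.
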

\begin{proof}  We use the same notations as in the proof of Proposition \ref{prop8}. The extra condition 
$u''(L)=0$ means that $\gamma _1=0$. As for Proposition \ref{prop8}, the case when $Q$ has multiple roots is impossible.
We therefore assume that $Q$ has three different roots.\\
a. If $\gamma '=0$, then $\beta =0$ and for each root $\xi$ of $Q$, we have that 
\[\alpha +\gamma _2 e^{-iL\xi}=0= \alpha -\gamma _2 e^{iL\xi} .\]   
We assume  $\gamma _2\ne 0$ (otherwise all the coefficients are zero), and we arrive to $e^{2iL\xi}=-1$ and $2L\xi=\pi +2\pi k$, $k\in \Z$. Since the sum of the three
roots of $Q$ is $0$, we arrive to a contradiction. \\
b. If $\gamma '\ne 0$, then we have (still) $\gamma _1=0$, $\beta =\pm i\gamma '\ne 0$ and $\alpha =\pm i\gamma _2$, a case
already considered in the proof of Proposition \ref{prop8}. The cases 1 and 2 have to be excluded, while the cases 3 and 4 are valid
and they lead to $L\in \cG \cup \cG '$. 
\end{proof}

\section{Proof of Theorems \ref{THMINTRO1} and \ref{THMINTRO2}.}
We shall first investigate the wellposedness of  system \eqref{INTRO1} 
when picking $g_2(t)=-\alpha \eta _x (t,L)$ as control input, with $\alpha >0$. We first notice that a {\em global Kato smoothing 
effect holds}, thanks to which we can derive the wellposedness of the system in the energy space
$X_0=[L^2(0,L)]^2$.  

Let us first pay our attention to the linearized system
\be
\label{M1}
\left\{ 
\begin{array}{ll}
\eta _t + v_x + v_{xxx} =0, \quad &  t\in (0,T) , \ x\in (0,L), \\
v_t + \eta _x + \eta _{xxx} =0, &  t\in (0,T) , \ x\in (0,L), \\
\eta (t,0)=0, \ \eta (t,L)=0,  \ \eta _x(t,0)=0, &t\in (0,T), \\
v(t,0)=0, \ v(t,L)=0, \ v_x(t,L) = - \alpha \eta _x(t,L), &t\in (0,T) , \\
\eta (0,x)=\eta ^0(x),\ v(0,x)=v^0(x), &x\in (0,L). 
\end{array}\right.
\ee

We introduce the operator
\[
\tilde A (\eta , v) :=(-v_x-v_{xxx}, -\eta _x-\eta _{xxx})
\]
with domain 
\[
D(\tilde A):= 
\{ (\eta, v)\in H^3(0,L)^2; \  \eta (0)=\eta (L) = \eta _x(0)=v(0)=v(L)=0, \ 
v_x(L)=-\alpha \eta _x(L) \} \subset X_0.
\] 
\begin{proposition}
\label{prop1bis}
Assume that $\alpha >0$. Then the operator $\tilde A$ generates a semigroup of contractions $(e^{t\tilde A})_{t\ge 0}$  in $X_0$.  Furthermore, 
for any $T>0$ and any $(\eta ^0,v^0)\in X_0$,  the solution $(\eta , v):=e^{t\tilde A} (\eta ^0, v^0)$ 
of \eqref{M1} belongs to  $L^2(0,T, [H^1(0,L)]^2)$ and  we have
\be
\label{M2}
\int_0^T\!\!\! \int_0^L (\eta _x^2 + v_x^2 ) \, dx dt \le 
\frac{2}{3} \left( L + \frac{T}{2} + \frac{L(\alpha ^2 +1)}{4\alpha} \right) \Vert (\eta ^0, v^0)\Vert ^2_{X_0}.   
\ee
\end{proposition}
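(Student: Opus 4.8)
The plan is to obtain the generation statement from the Lumer--Phillips theorem, and then to derive the global Kato smoothing bound \eqref{M2} from a Morawetz-type multiplier, in the spirit of the computation leading to \eqref{A200}. First I would check that $\tilde A$ is dissipative. For $(\eta,v)\in D(\tilde A)$, integrating by parts and using $\eta(0)=\eta(L)=\eta_x(0)=v(0)=v(L)=0$, the first-order terms contribute $-[\eta v]_0^L=0$ while the third-order terms collapse to the single boundary term $[v_x\eta_x]_0^L=v_x(L)\eta_x(L)$; the feedback relation $v_x(L)=-\alpha\eta_x(L)$ then gives
\be
(\tilde A(\eta,v),(\eta,v))_{X_0}=-\alpha\,\eta_x(L)^2\le 0 ,
\ee
so $\tilde A$ is dissipative, and the same identity shows the energy is nonincreasing.

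Next I would compute the adjoint. A symmetric integration by parts shows that $\tilde A^*$ acts as $-\tilde A$ formally, on the domain $D(\tilde A^*)=\{(\phi,\psi)\in[H^3(0,L)]^2:\ \phi(0)=\phi(L)=\phi_x(0)=\psi(0)=\psi(L)=0,\ \psi_x(L)=\alpha\phi_x(L)\}$, and the same manipulation yields $(\tilde A^*(\phi,\psi),(\phi,\psi))_{X_0}=-\alpha\,\phi_x(L)^2\le 0$, so $\tilde A^*$ is dissipative as well. To apply Lumer--Phillips I then only need $R(I-\tilde A)=X_0$: dissipativity gives $\norm{(I-\tilde A)(\eta,v)}_{X_0}\ge\norm{(\eta,v)}_{X_0}$, hence $R(I-\tilde A)$ is closed, and any $(f,g)$ orthogonal to it lies in $D(\tilde A^*)$ with $\tilde A^*(f,g)=(f,g)$, forcing $\norm{(f,g)}_{X_0}^2=(\tilde A^*(f,g),(f,g))_{X_0}\le 0$, i.e. $(f,g)=0$. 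Thus $\tilde A$ is $m$-dissipative and generates a contraction semigroup $(e^{t\tilde A})_{t\ge0}$ in $X_0$.

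For the smoothing estimate I would work first with $(\eta^0,v^0)\in D(\tilde A)$, so that the solution is strong and all the traces below are legitimate. Multiplying the first equation of \eqref{M1} by $xv$, the second by $x\eta$, integrating over $(0,T)\times(0,L)$ and adding, the boundary conditions produce
\be
\tfrac32\int_0^T\!\!\!\int_0^L(\eta_x^2+v_x^2)\,dxdt=\tfrac12\int_0^T\!\!\!\int_0^L(\eta^2+v^2)\,dxdt-\Big[\int_0^L x\eta v\,dx\Big]_0^T+\tfrac{L}{2}\int_0^T(\eta_x(t,L)^2+v_x(t,L)^2)\,dt .
\ee
With $v_x(L)=-\alpha\eta_x(L)$ the last term equals $\tfrac{L(\alpha^2+1)}{2}\int_0^T\eta_x(t,L)^2\,dt$, and integrating the dissipation identity $\tfrac{d}{dt}\tfrac12\norm{(\eta,v)}_{X_0}^2=-\alpha\,\eta_x(t,L)^2$ over $(0,T)$ bounds this by $\tfrac{L(\alpha^2+1)}{4\alpha}\norm{(\eta^0,v^0)}_{X_0}^2$. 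Using the contraction property to estimate the first integral by $\tfrac{T}{2}\norm{(\eta^0,v^0)}_{X_0}^2$ and the bracket by $L\norm{(\eta^0,v^0)}_{X_0}^2$ (through $\abs{\int_0^L x\eta v\,dx}\le\tfrac{L}{2}\norm{(\eta,v)}_{X_0}^2$), one obtains \eqref{M2} after multiplying by $\tfrac23$. Since $D(\tilde A)\supset [C_c^\infty(0,L)]^2$ is dense in $X_0$, a density argument then upgrades the estimate to every $(\eta^0,v^0)\in X_0$ and shows $(\eta,v)\in L^2(0,T,[H^1(0,L)]^2)$.

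The algebra of the two integrations by parts is routine. The main obstacle I anticipate is twofold: correctly identifying $D(\tilde A^*)$ — in particular verifying that its feedback condition is exactly $\psi_x(L)=\alpha\phi_x(L)$, which is precisely what makes $\tilde A^*$ dissipative rather than accretive — and rigorously justifying the multiplier identity, which presupposes enough trace regularity for $\eta_x(\cdot,L)$ to belong to $L^2(0,T)$ for strong solutions (an analogue of Proposition \ref{prop2} adapted to the feedback boundary condition) before the density passage.
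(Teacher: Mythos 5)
Your proposal is correct and follows essentially the same route as the paper: dissipativity of $\tilde A$ together with dissipativity of its adjoint (whose domain you identify exactly as the paper's auxiliary operator $B$, with the feedback condition $\psi_x(L)=\alpha\phi_x(L)$) yields the contraction semigroup, and the $x$-multiplier identity combined with the energy identity $\bigl[\tfrac12\Vert(\eta,v)\Vert_{X_0}^2\bigr]_0^T=-\alpha\int_0^T\eta_x(t,L)^2\,dt$ gives \eqref{M2} with precisely the stated constant, after the final density step. The one step you leave as a sketch --- the hard inclusion $D(\tilde A^*)\subset D(B)$, which the paper proves by testing against $v=0$, $\eta\in C_c^\infty(0,L)$ to obtain $H^3$ regularity and then reading off the boundary conditions from the residual boundary terms --- is exactly the verification you flag at the end, and it goes through as you anticipate.
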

\begin{proof}
It is clear that $D(\tilde A)$ is dense in $X_0$ and that $\tilde A$ is closed. 
Let $(\eta , v) \in D(\tilde A)$. Then we readily obtain that 
\[
(\tilde A (\eta , v), (\eta , v) )_{X_0} = -\alpha \eta _x (L)^2\le 0,
\]
so that  $\tilde A$ is a dissipative operator. 
Introduce the operator $B(\theta , u) := (u_x+u_{xxx}, \theta _x + \theta _{xxx})$ with domain 
\[
D(B):= 
\{ (\theta , u)\in H^3(0,L)^2; \  \theta (0)=\theta (L) = \theta _x(0)=u(0)=u(L)=0, \ 
u_x(L)=\alpha \theta _x(L) \} \subset X_0. \]

We easily obtain that $(B(\theta  , u), ( \theta , u))_{X_0} =-\alpha \theta _x (L)^2\le 0$, so that $B$ is also 
a dissipative operator. 
We claim that $B=\tilde A^*$. First, for all $( \eta , v ) \in D(\tilde A)$ and all $(\theta , u) \in D(B)$, we have by integration by parts that 
\[
(\tilde A (\eta , v), (\theta , u))_{X_0} = ((\eta , v), B(\theta , u))_{X_0}, 
\]
which shows that $B\subset \tilde A ^*$.

Let us now check that $\tilde A^*\subset B$. Pick any $(\theta , u ) \in D(\tilde A^*)$. Then,  we 
have for some constant $C>0$ that
\[
\left\vert ( (\theta , u ) ,\tilde  A(\eta , v))_{X_0}
\right\vert \le C \Vert (\eta , v) \Vert _{X_0}
\quad \forall (\eta , v) \in D(\tilde A), 
\] 
i.e.
\be
\label{M2bis}
\left\vert \int _0^L  [ \theta (v_x+v_{xxx} ) + u (\eta _x + \eta _{xxx}) ]dx \right\vert 
  \le C \left(  \int_0^L [\eta ^2 + v^2 ]dx\right)^\frac{1}{2}
\quad \forall (\eta , v)\in D(\tilde A). 
\ee
Picking $v=0$ and $\eta \in C_c^\infty (0,L)$, we infer from \eqref{A2} that $u_x+u_{xxx}\in L^2(0,L)$, and hence that 
$u\in H^3(0,L)$. Similarly, we obtain that $\theta \in H^3(0,L)$. 
Integrating by parts in the left hand side  of \eqref{M2bis}, we obtain that 
\begin{eqnarray*}
&&\left\vert \theta (L) v_{xx}(L) -\theta (0)v_{xx}(0)  + \alpha \theta _x (L) \eta _x(L) +  \theta _x (0)v_x(0) 
+ u(L) \eta _{xx}(L)  -u(0) \eta _{xx} (0) -u_x(L) \eta _x(L) \right\vert \\
&& \qquad \le C\left( \int_0^L [\eta ^2 + v^2] dx \right)\qquad \forall (\eta , v) \in D(\tilde A) . 
\end{eqnarray*}
It follows that 
\[
\theta (0)=\theta (L)=\theta _x(0)=u(0)=u(L)=0,\quad u_x(L)=\alpha \theta _x(L),
\]
so that $(\theta , u)\in D(B)$ and  $\tilde A^*=B$. We conclude that the operator $A$ is $m-$dissipative in $X_0$ and that it generates a semigroup of contractions in $X_0$. 

Pick any $(\eta ^0, v^0 ) \in D(\tilde A )$ and let $(\eta ,v) (t) :=e^{t\tilde A}(\eta ^0, v^0)$. Scaling the two first equations   in \eqref{M1} by $\eta$ and $v$ respectively, and summing the two obtained equations we arrive to the energy identity
\be
\label{M5}
\left[ \frac{1}{2} \int_0^L [\eta ^2+v^2]dx \right] _0^T + \alpha \int_0^T \eta _x (t,L) ^2dt =0.  
\ee   
Scaling the two first equations in \eqref{M1} by $x v$ and $x \eta$, respectively, and summing the obtained equations, we arrive to 
\[
\left[ \int_0^L (x\eta v)dx \right]_0^T -\frac{1}{2}\int_0^T\!\!\!\int_0^L (\eta ^2 + v^2)dxdt 
+\frac{3}{2} \int_0^T\!\!\! \int_0^L (\eta _x^2 + v_x^2) dxdt -\frac{L}{2} (\alpha ^2 + 1)  \int_0^T \eta _x (t,L)^2 dt=0. 
\]
Combined with \eqref{M5}, it yields
\begin{eqnarray*}
\frac{3}{2} \int_0^T\!\!\! \int_0^L (\eta _x^2 + v_x^2 ) dx dt 
&=& -\left[ \int _0^L (x\eta v) dx\right]_0^T + \frac{1}{2} \int_0^T\!\!\!\int_0^L (\eta ^2+v^2) dxdt 
+\frac{L}{2} (\alpha ^2 +1) \int_0^T \eta _x (t,L)^2 dt \\
&\le& L\left(  \Vert \eta (T)\Vert _{L^2(0,L)} \Vert v(T) \Vert _{L^2(0,L)} + 
\Vert \eta ^0\Vert _{L^2(0,L)} \Vert v^0\Vert _{L^2(0,L)}  \right) \\
&&\quad +\left( \frac{T}{2} + \frac{L}{4\alpha } (\alpha ^2 +1)\right) 
(\Vert \eta ^0\Vert ^2_{L^2(0,L)} +  \Vert v^0\Vert ^2 _{L^2(0,L)}) \\
&\le & \left( L + \frac{T}{2} + \frac{L}{4\alpha } (\alpha ^2 +1) \right) \Vert (\eta ^0, v^0)\Vert ^2_{X_0},
\end{eqnarray*}
as desired. 
\end{proof}
We search a solution of 
\be
\label{M10}
\left\{ 
\begin{array}{ll}
\eta _t + v_x +(\eta v)_x +  v_{xxx} =0, \quad &  t\in (0,\infty)  , \ x\in (0,L), \\
v_t + \eta _x + vv_x + \eta _{xxx} =0, &  t\in (0,\infty ) , \ x\in (0,L), \\
\eta (t,0)=0, \ \eta (t,L)=0,  \ \eta _x(t,0)=0, &t\in (0,\infty ), \\
v(t,0)=0, \ v(t,L)=0, \ v_x(t,L) = - \alpha \eta _x(t,L), &t\in (0,\infty) , \\
\eta (0,x)=\eta ^0(x),\ v(0,x)=v^0(x), &x\in (0,L), 
\end{array}\right.
\ee
as a  solution of the integral equation
\be
\label{M11}
(\eta , v) (t) =e^{t \tilde A } (\eta ^0, v^0)  - \int_0^t e^{ (t-\tau ) \tilde A } ((\eta v) _x (\tau ), (vv_x)(\tau ))d\tau, \quad t\ge 0.    
\ee
Introducing the map $\Gamma $ defined by
\[ 
\Gamma (\eta ,v ) (t) := e^{t \tilde A } (\eta ^0, v^0)  - \int_0^t e^{ (t-\tau ) \tilde A } ((\eta v) _x (\tau ), (vv_x)(\tau ))d\tau ,
\]
we will show that for any (sufficiently small) time $T>0$, the map $\Gamma$ has a unique fixed-point in some closed   
ball $B(0,R)$ in the space 
\[
E_T:= C([0,T], X_0) \cap L^2(0,T, [H^1(0,L)]^2)    
\]
endowed with the norm
\[
\Vert (\eta ,v ) \Vert _{E_T} := \sup _{t\in [0,T]} \Vert (\eta (t), v(t)) \Vert _{X_0} 
+ \left( \int_0^T \Vert (\eta _x(t), v _x(t) ) \Vert ^2 _{X_0}  \right) ^\frac{1}{2}. 
\]
Such a fixed-point yields a local solution of \eqref{M10}. 
\begin{proposition}
For any $(\eta ^0, v^0)\in X_0$, there are some positive numbers $T$ and $R$  such that the  system \eqref{M10} has a unique (integral) solution 
$(\eta , v) \in B(0,R) \subset E_T$. 
\end{proposition}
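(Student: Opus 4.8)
The plan is to prove the statement by a standard contraction mapping argument for $\Gamma$ on a small closed ball of $E_T$, the linear input being Proposition \ref{prop1bis} and the nonlinear input being a space--time bilinear estimate that carries a positive power of $T$. First I would record the free-evolution bound. By Proposition \ref{prop1bis} the contraction property controls the $C([0,T],X_0)$ part while the Kato smoothing estimate \eqref{M2} controls the $L^2(0,T,[H^1(0,L)]^2)$ part, so (taking $T\le 1$ to keep the constant in \eqref{M2} bounded) one gets $\Vert e^{\cdot \tilde A}(\eta ^0, v^0)\Vert _{E_T}\le C_0 \Vert (\eta ^0, v^0)\Vert _{X_0}$. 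Next I would upgrade this to an inhomogeneous estimate for the Duhamel operator: for $F\in L^1(0,T,X_0)$ the contraction property gives $\Vert \int_0^t e^{(t-\tau)\tilde A}F\,d\tau\Vert _{C([0,T],X_0)}\le \Vert F\Vert _{L^1(0,T,X_0)}$, while applying \eqref{M2} to each frozen datum $F(\tau)$ and invoking Minkowski's integral inequality yields $\Vert \int_0^t e^{(t-\tau)\tilde A}F\,d\tau\Vert _{L^2(0,T,[H^1(0,L)]^2)}\le C\Vert F\Vert _{L^1(0,T,X_0)}$. Thus the Duhamel map is bounded from $L^1(0,T,X_0)$ into $E_T$.

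The heart of the matter is the bilinear estimate. Writing the forcing as $F=((\eta v)_x,(v^2/2)_x)$, I would bound $\Vert (\eta v)_x\Vert _{L^2(0,L)}\le \Vert \eta _x\Vert _{L^2}\Vert v\Vert _{L^\infty}+\Vert \eta\Vert _{L^\infty}\Vert v_x\Vert _{L^2}$ and use the one-dimensional interpolation inequality $\Vert w\Vert _{L^\infty}\le C\Vert w\Vert _{L^2}^{1/2}\Vert w\Vert _{H^1}^{1/2}$. Pulling out $\sup_t\Vert \cdot\Vert_{L^2}^{1/2}$, integrating in time, and applying H\"older with exponents $(4,2,4)$ produces a factor $T^{1/4}$, giving $\Vert F\Vert _{L^1(0,T,X_0)}\le C\,T^{1/4}\Vert (\eta,v)\Vert _{E_T}^2$; the same computation handled via $\Vert vv_x\Vert_{L^2}\le C\Vert v\Vert_{L^2}^{1/2}\Vert v\Vert_{H^1}^{3/2}$ and H\"older $(4,4/3)$ covers the second component. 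Writing $\eta_1v_1-\eta_2v_2=\eta_1(v_1-v_2)+(\eta_1-\eta_2)v_2$ and $v_1^2-v_2^2=(v_1+v_2)(v_1-v_2)$, the identical estimate on the differences yields the Lipschitz bound $C\,T^{1/4}\big(\Vert (\eta_1,v_1)\Vert_{E_T}+\Vert (\eta_2,v_2)\Vert_{E_T}\big)\Vert (\eta_1-\eta_2,v_1-v_2)\Vert _{E_T}$.

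Finally I would close the fixed point. Combining the three steps gives $\Vert \Gamma(\eta,v)\Vert _{E_T}\le C_0\Vert (\eta ^0,v^0)\Vert _{X_0}+C\,T^{1/4}\Vert (\eta,v)\Vert _{E_T}^2$ together with the contraction estimate above. Choosing $R:=2C_0\Vert (\eta ^0,v^0)\Vert _{X_0}$ and then $T$ small enough that $C\,T^{1/4}R\le 1/2$ and $2C\,T^{1/4}R<1$, the map $\Gamma$ sends $B(0,R)\subset E_T$ into itself and is a strict contraction there, so the Banach fixed-point theorem furnishes a unique solution in $B(0,R)$; that $\Gamma(\eta,v)$ indeed lies in $E_T$ (time continuity in $X_0$, membership in $L^2(0,T,[H^1(0,L)]^2)$) follows from strong continuity of the semigroup and the two Duhamel bounds.

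I expect the main obstacle to be the bilinear estimate: one must absorb the loss of one derivative in the nonlinearity using only the single-derivative gain provided by \eqref{M2}, and simultaneously extract a positive power of $T$ from the time integration so that the contraction closes for small $T$ even when the data $(\eta^0,v^0)$ has arbitrary (fixed) size. The inhomogeneous smoothing bound is obtained cheaply from \eqref{M2} by Minkowski, so the real content lies in the space--time bilinear estimate and the H\"older bookkeeping that isolates the $T^{1/4}$ factor.
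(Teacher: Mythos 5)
Your proposal is correct and takes essentially the same route as the paper: a contraction mapping argument for $\Gamma$ in a closed ball of $E_T$, built on the Duhamel bound from $L^1(0,T,X_0)$ into $E_T$ and a bilinear estimate carrying the factor $T^{\frac14}$, closed with the same choice $R=2C_0\Vert (\eta ^0,v^0)\Vert _{X_0}$ and a smallness condition on $T$. The only difference is that you supply in detail (via Minkowski's integral inequality and Gagliardo--Nirenberg/H\"older) the two ingredients that the paper imports from \cite{R1} and \cite{PMVZ}, which is a completion of the same argument rather than a different approach.
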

\begin{proof}
The proof is very similar to those of the wellposedness of the KdV equation with the boundary conditions 
$u(t,0)=u(t,L)=u_x(t,L)=0$ (see \cite{PMVZ,R1}). First, we can prove as in \cite{R1} that for some constant 
$C_1>0$, we have for all $T\in (0,1]$ and all $(f,g)\in L^1(0,T, X_0)$
\be
\Vert \int _0^t e^{ (t-\tau) \tilde A}  ( f(\tau ), g(\tau )) d\tau \Vert _{E_T} \le C_1 \Vert (f,g)\Vert _{L^1(0,T,X_0)}. 
\ee 
Following \cite{PMVZ}, one can find a constant $C(L)>0$ such that for all $T\in (0,1]$ and all 
$(\eta , v)\in E_T$, it holds
\ba
\int_0^T \Vert (\eta v)_x \Vert _{X_0} dt 
&\le& CT^\frac{1}{4}
\left( 
\Vert \eta _x\Vert _{L^2 (0,T,L^2(0,L) ) }  \Vert v \Vert ^\frac{1}{2}_{L^\infty (0,T,L^2(0,L))}
\Vert v_x \Vert ^\frac{1}{2}_{L^2 (0,T,L^2(0,L))}  \right.  \nonumber \\
&&    
\left. \qquad   +  \Vert v_x\Vert _{L^2 (0,T,L^2(0,L) ) }  \Vert \eta \Vert ^\frac{1}{2}_{L^\infty (0,T,L^2(0,L))}
\Vert \eta _x \Vert ^\frac{1}{2}_{L^2 (0,T,L^2(0,L))}
\right).
\ea
It follows that there are some positive constants $C_2(L),C_3(L)$ such that for all $T>0$ and all 
$(\eta ^1,v^1) , (\eta ^2,v^2) \in E_T$ we have 
\ba
\Vert \Gamma (\eta ^1, v^1) \Vert _{E_T} &\le& C_2\Vert (\eta  ^0,v ^0 )\Vert _{X_0} + C_3 T^\frac{1}{4} \Vert (\eta ^1, v^1)\Vert ^2_{E_T}, \\
\Vert \Gamma (\eta ^1, v^1) - \Gamma (\eta ^2, v^2)  \Vert _{E_T} 
&\le& C_3  T^\frac{1}{4} (\Vert (\eta ^1, v^1) \Vert _{E_T} + \Vert (\eta ^2, v^2) \Vert _{E_T}) 
 \Vert (\eta ^1-\eta ^2, v^1- v^2) \Vert _{E_T}. \qquad
\ea
Picking $R = 2 C_2 \Vert (\eta ^0, v^0)\Vert _{X_0} $ and $T>0$ such that $2C_3T^\frac{1}{4}R=1/2$, we 
see that the map $\Gamma$ is a contraction in the closed ball $B(0,R)$ of $E_T$, and hence that it has a unique
fixed point by the contraction mapping theorem. 
\end{proof}
Let us now proceed to the proof of Theorems \ref{THMINTRO1} and \ref{THMINTRO2}. Assume that $L\in (0,+\infty ) \setminus {\mathcal N}$. 
First, we show that the semigroup $(e^{ t \tilde A} )_{t\ge 0}$ is exponentially stable in $X_0$; that is, for some 
constants $C,\mu >0$ it holds
\be
\label{M51}
\Vert e^{t\tilde A} (\eta ^0, v^0) \Vert _{X_0} \le Ce^{-\mu t} \Vert (\eta ^0, v^0 ) \Vert _{X_0}\qquad \forall t\in \R_+. 
\ee
It is actually sufficient to prove that for some $T>0$ and some $C=C(T)>0$,  
\be
\label{M52}
\Vert  (\eta ^0, v^0) \Vert ^2_{X_0} \le C \int_0^T \eta _x (t,L)^2dt. 
\ee
Indeed, combining \eqref{M52} with  \eqref{M5}, we obtain $\Vert (\eta (T), v(T))\Vert ^2_{X_0} \le (1-2\alpha C^{-1})
\Vert (\eta ^0, v^0)\Vert ^2 _{X_0}$ which yields \eqref{M51} by the semigroup property. 

To prove \eqref{M52}, we proceed by contradiction. If \eqref{M52} is not true, one can find a sequence 
$(\eta ^{0,n}, v^{0,n})_{n\ge 0}$ in $X_0$ such that, denoting $(\eta ^n, v^n) (t) :=e^{t\tilde A} (\eta ^{0,n}, v^{0,n})$, we have 
\be
\label{M60}
1=\Vert (\eta ^{0,n}, v^{0,n})\Vert ^2_{X_0} > n \int_0^T \eta_x ^n (t,L)^2 dt. 
\ee
Scaling the two first equations  in \eqref{M1} by $ (T-t) \eta$ and $ (T-t) v$ respectively, and summing the two obtained equations we obtain 
\be
\label{M61}
\frac{T}{2} \Vert (\eta (0), v(0)) \Vert ^2_{X_0}  =\frac{1}{2}  \int_0^T\!\!\!\int_0^L (\eta ^2 +v^2)dxdt  + \alpha \int_0^T (T-t) \eta_x(t,L)^2dt. 
\ee
Since by  \eqref{M1}  and \eqref{M2} the sequence $(\eta ^n, v^n)$ is bounded in 
$L^2(0,T, [H^1_0(0,L)]^2)\cap$$ H^1(0,T, $ $[H^{-2} (0,L)]^2)$, we infer from Aubin-Lions' lemma
that a subsequence $(\eta ^{n_k}, v^{n_k})$ is convergent in $L^2(0,T, [L^2(0,L)]^2)$. Combined to 
\eqref{M60}-\eqref{M61}, this yields that $(\eta^{0,n_k}, v^{0,n_k})\to (\eta^0,v^0)$ strongly in $X_0$ for some 
$(\eta ^0,v^0)\in X_0$ satisfying  $\Vert (\eta ^0, v^0)\Vert _{X_0} =1$ and $\eta_x(\cdot ,  L)\equiv 0$. Taking into account
\eqref{M1}, we infer that $v_x(\cdot ,  L)\equiv 0$ as well.  But since $L\not\in \mathcal N$, this is impossible by Theorem
\ref{thm2}.

Thus \eqref{M52} is established. Proceeding in much the same way as for \cite[Theorem 1.1]{PR}, we can derive Theorem 
\ref{THMINTRO2}. (The proof is omitted for the sake of shortness.)

Let us now proceed to the proof of Theorem \ref{THMINTRO1}. We first notice that the linear system  
\be
\label{M70}
\left\{ 
\begin{array}{ll}
\eta _t + v_x +  v_{xxx} =0, \quad &  t\in (0,T)  , \ x\in (0,L), \\
v_t + \eta _x + \eta _{xxx} =0, &  t\in (0,T) , \ x\in (0,L), \\
\eta (t,0)=0, \ \eta (t,L)=0,  \ \eta _x(t,0)=0, &t\in (0,T), \\
v(t,0)=0, \ v(t,L)=0, \ v_x(t,L) = - \alpha \eta _x(t,L) +h(t), &t\in (0,T) , \\
\eta (0,x)=\eta ^0(x),\ v(0,x)=v^0(x), &x\in (0,L), 
\end{array}\right.
\ee
is well posed for $(\eta ^0, v^0)\in X_0$ and $h\in L^2(0,T)$. Clearly, the wellposedness 
can be derived for $h\in C^2 ([0,T])$ by performing the change of unknowns $\tilde \eta (t,x) :=\eta (t,x), \ \tilde v (t,L) := v(t,L) 
+ h(t) g(x)$, where the function $g\in C^\infty ([0,L])$ is such that $g(0)=g(L)=0$ and $g'(L)=-1$.  To extend the result from 
$C^2([0,T]) $ to $L^2(0,T)$, we need to derive some {\em a priori} estimates. 
Scaling in the first (resp. second) equation of \eqref{M70} by $\eta$ (resp. $v$), we obtain after some integrations by parts
\[
\left[ \frac{1}{2}  \int_0^L [ \eta ^2 + v^2]dx \right]_0^T + \alpha \int_0^T \eta _x (t,L)^2dt - \int_0^T \eta _x(t,L) h(t)dt =0.   
\] 
This yields for all $T\ge 0$
\be
\label{PPPP1}
\Vert (\eta (T, .), v(T,.)) \Vert _{X_0}^2 + \alpha \int_0^T \eta _x (t,L)^2 dt  \le  \Vert (\eta ^0, v^0)\Vert ^2_{X_0} +\frac{1}{\alpha} \int_0^T h(t)^2dt, 
\ee
and thus $(\eta, v)\in C([0,T], X_0)$ if $(\eta ^0, v^0)\in X_0$ and $h\in L^2(0,T)$. 
Scaling now in the first (resp. second) equation of \eqref{M70} by $xv$ (resp. $x\eta$) yields for some constant $C=C(L,T) >0$
\[
\int_0^T\!\!\!\int_0^L (\eta _x^2 + v_x^2) dxdt \le C \left( \Vert (\eta ^0, v^0)\Vert ^2_{X_0} + \int_0^T h(t)^2dt\right),
\]
so that $(\eta, v)\in E_T$ if  $(\eta ^0, v^0)\in X_0$ and $h\in L^2(0,T)$. 

The same computations as in the proof of Theorem \ref{THMINTRO2} show that the operator 
\[ \hat A (\eta , v) =(-v_x-v_{xxx}, -\eta _x -\eta _{xxx}) \]
with domain
\[
D(\hat A) =\{ (\eta, v)\in H^3(0,L)^2; \ \eta (0) = \eta (L)  = \eta _x(L) =v(0)=v(L) =0,\ \ v_x(0)=\alpha \eta _x(0)\} \subset X_0 
\]
generates a semigroup of contractions in $X_0$. Next, performing the change of variables $t\to T-t$ and $x\to L-x$, 
we infer that for any $(\theta ^1, u^1)\in X_0$, the backward system 
\be
\label{M100}
\left\{ 
\begin{array}{ll}
\theta _t + u_x +  u_{xxx} =0, \quad &  t\in (0,T)  , \ x\in (0,L), \\
u_t + \theta _x + \theta _{xxx} =0, &  t\in (0,T) , \ x\in (0,L), \\
\theta (t,0)=0, \ \theta (t,L)=0,  \ \theta _x(t,0)=0, &t\in (0,T), \\
u(t,0)=0, \ u(t,L)=0, \ u_x(t,L) = \alpha \theta _x(t,L), &t\in (0,T), \\
\theta (T,x)=\theta ^1(x),\ u(T,x)=u^1(x), &x\in (0,L), 
\end{array}\right.
\ee
has a unique solution $(\theta ,u)\in C([0,T], X_0)$, which belongs to $C([0,T],[H^3(0,L)]^2)\cap C^1([0,T], X_0)$ if 
$(\theta ^1(L-\cdot), u^1(L-\cdot))\in D(\hat A)$. 
 
We now prove the exact controllability in $X_0$ of the linear system  \eqref{M70}. Scaling in the first (resp. second) equation of \eqref{M70} by $\theta $ (resp. $u$), we obtain after some integrations by parts
\[
\left[ \int_0^L [\eta \theta + vu]dx  \right]_0^T = \int_0^T \theta _x (t,L) h(t)dt.  
\]
By the Hilbert Uniqueness Method (see \cite{lions1, lions2}), the exact controllability of  \eqref{M70} 
holds in $X_0$ with control inputs $h\in L^2(0,T)$ if and only if the following observability inequality holds
\be
\label{M140}
\Vert (\theta ^1, u^1)\Vert ^2_{X_0} \le C \int_0^T  | \theta  _x(t,L)|^2 dt 
\ee
for the solutions of the {\em backward} system \eqref{M100}. 
Again the backward system enjoys the {\em global Kato smoothing property}: 
\[
\Vert (\theta _x, u_x)\Vert _{ L^2(0,T,X_0) } \le C  \Vert (\theta ^1, u^1)\Vert_{X_0}.
\] 
Proceeding as in \cite{R1}, 
we easily see that if \eqref{M140} is false, then we can find a pair of data $(\theta ^1,u^1)$ in $X_0$ such that
$\Vert (\theta ^1, u^1)\Vert _{X_0}=1$ and $\theta _x(\cdot , L)\equiv 0$. 
But this is  impossible 
by Theorem \ref{thm2}, for $L\not\in \mathcal N$. 

We have established the exact controllability of the linear system \eqref{M70}. The (local) exact controllability of the 
nonlinear system \eqref{INTRO1} in $X_0$ follows at once by applying the contraction mapping theorem as in \cite{R1} for KdV. (Note that $\eta _x(\cdot , L)\in L^2(0,T)$ by \eqref{PPPP1}, and hence $g_2=-\alpha \eta _x(\cdot, L)+h\in L^2(0,T)$ 
as well.)
We omit the details for the sake of shortness.
The proof of Theorem \ref{THMINTRO1} is complete.  

\section*{Appendix: Proof of Theorem \ref{thm100}.}

The proof of Theorem \ref{thm100} is not based on the multiplier method, but on the analysis of the spectral properties 
of the operator $A$. More precisely, we estimate the asymptotic behavior of the eigenvalues of $A$ and use it to
establish the observability inequalities \eqref{GG1}-\eqref{GG2}. The proof of Theorem \ref{thm100} is outlined as follows.  
In Step 1, we introduce  the operator $B y   = - y'''(L-x)-y'(L - x)$ with domain $D(B)  =  \{ y\in H^3(0,L)\cap H^1_0(0,L);\  y'(L)=0\}$, which is 
 closely related to the operator $A$ (but more easy to handle).  
We prove that it is selfadjoint and that it has a compact resolvent, so that it can be diagonalized in an orthonormal basis in $L^2(0,L)$. 
In Step 2, we estimate the asymptotic behavior of the eigenvalues of $B$. Finally, in Step 3 we show that $A$ can be diagonalized in an orthonormal basis of $[L^2(0,L)]^2$ and use the expansions of the solutions in terms of the 
eigenfunctions to prove   
\eqref{GG1} and \eqref{GG2}. \\

\noindent
{\sc Step 1 (Study of the operator $B$)}\\
 Let 
\[
(By)(x) :=-y'''(L-x) -y'(L-x)\quad \textrm{ for } y\in D(B)=:\{ y\in  H^3(0,L)\cap H^1_0(0,L); \ y'(L)=0\},  
\]
where $'=d/dx$. Then we have the following result. \\[3mm] 
{\sc Claim A.1} $B$ is a selfadjoint operator in $L^2(0,L)$.\\
Picking any $y,z\in D(B)$, we readily obtain by integration by parts that
\[
\int_0^L [-y'''(L-x)-y'(L-x)]z(x)dx = \int_0^L y(x)[-z'''(L-x)-z'(L-x)]dx, 
\] 
i.e. $(By,z)_{L^2}=(y, Bz)_{L^2}$ where $(.,.)_{L^2}$ stands for  the scalar product in $L^2(0,L)$. This means that 
$D(B)\subset D(B^*)$ and that $B^*=B$ on $D(B)$. Conversely, pick any 
 $z\in D(B^*)$. 
Then 
\[
\vert (By,z)_{L^2}\vert = \left\vert \int_0^L [-y'''(L-x)-y'(L-x)]z(x)\, dx \right\vert \le C\Vert y\Vert_{L^2}\quad
 \forall y\in D(B). 
\]
Setting $w(x)=z(L-x)$, we arrive to $\vert  (y'''+y', w) _{L^2} \vert \le C \Vert y \Vert _{L^2} $, or equivalently for
$y\in {\mathcal D } = {\mathcal D}(0,L)$
\[
\left\vert \langle w''' +  w', y \rangle _{ {\mathcal D} ',{\mathcal D}} \right\vert  \le C \Vert y\Vert _{L^2} \cdot 
\]  
It follows that $w'''+w'\in L^2(0,L)$, and hence $w\in H^3(0,L)$ and $z\in H^3(0,L)$. Integrating by parts in 
$(By,z)_{L^2}$ and using the fact that $y'''+y' \in L^2(0,L)$, we arrive to 
 \[
 \left\vert  -y''(L)z(0) + y''(0)z(L)  + y'(0)z'(L)   \right\vert \le C' \Vert y\Vert _{L^2},\quad \forall y\in D(B).  
 \] 
 This yields $z(0)=z(L)=z'(L)=0$. Thus $z\in D(B)$, and we infer that $D(B^*)=D(B)$ and that $B^*=B$.\\
 
 \noindent
 {\sc Claim A.2}  If $L\in (0,+\infty)\setminus \cN$, then $B^{-1}:L^2(0,L)\to H^3(0,L)$ is a well-defined continuous operator. \\[3mm]
 Pick any $L\not\in \cN$  and any $z\in L^2(0,L)$. We search for $y\in D(B)$ solving the equation
 $By=z$, i.e. 
 \begin{eqnarray*}
 -y'''(L-x)-y'(L-x) &=& z(x), \quad x\in (0,L), \\
 y(0)=y(L)=y'(L)&=&0.
 \end{eqnarray*} 
 We search for the function $y$ in the form $y(x)=y_1(x)+y_2(x)$ with 
 \[
 y_1(x)=\int_x^L [1-\cos (x-s)] z(L-s)ds. 
 \]
We see at once that $y_1\in H^3(0,L)$ with $y_1''' (x)+y_1'(x)=-z(L-x)$ a.e. in $(0,L)$ and $y_1(L)=y_1'(L)=y_1''(L)=0$. 
Thus, it remains to find $y_2\in H^3(0,L)$ such that $y_2'''+y_2'=0$ and $y_2(L)=y_2'(L)=0$, $y_2(0)=-y_1(0)$.
By linearity, there is no loss of generality in assuming that $y_2(0)=1$. 
The three roots of the equation $r^3+r=0$ are $r_1=i$, $r_2=-i$ and $r_3=0$. We search for $y_2$ in the form 
\[
y_2(x) =\sum_{j=1}^3 a_j [ e^{r_jx} -ie^{r_j (L-x)}],
\] 
where the coefficients $a_1,a_2,a_3$ have still to be found. It is clear that $y_2$ solves $y_2'''+y_2'= 0$, and the boundary 
conditions $y_2(L)=y_2'(L)=0$ and $y_2(0)=1$ give the following conditions 
\ba
\sum_{j=1}^3 a_j (e^{r_jL}-i) &=& 0, \label{GG31}\\
\sum_{j=1}^3 a_j (1-i e^{r_jL}) &=& 1, \label{GG32} \\
\sum_{j=1}^3 r_ja_j (e^{r_jL} +i) &=&0. \label{GG33} 
\ea
Using  \eqref{GG31}, \eqref{GG32} and the precise values of $r_1,r_2,r_3$, we obtain  that $a_3=\frac{1}{2} -a_1-a_2$ and
\[
a_2=\frac{1}{e^{-iL}-1} \left(\frac{i-1}{2} -a_1(e^{iL}- 1)  \right) . 
\] 
(Note that $L\not\in 2\pi \Z$, for $L\not\in {\mathcal N}$.)
Plugging these expressions of $a_2$ and $a_3$ in \eqref{GG33}, we arrive to 
\[
a_1\left( i(e^{iL}+i) -\frac{e^{iL} -1}{ e^{-iL} -1} (-i) (e^{-iL} +i) \right)  + (-i) (e^{-iL} +i) \frac{i-1}{2(e^{-iL} -1) }=0. 
\]
We readily see that the coefficient behind $a_1$ is not zero, for $L\not\in 2\pi {\mathbb Z} $, so that the last equation 
for $a_1$ can be  solved. Claim A.2 is proved. \\

\noindent
{\sc Claim A.3} There is an orthonormal basis $(v_n)_{n\in \N}$ in $L^2(0,L)$ composed of eigenvectors of $B$: for all $n\in \N$,
$v_n\in D(B)$ and $Bv_n=\lambda _n v_n$ for some $\lambda _n\in\R$.\\

It is a direct consequence of Claims A.1 and A.2, since $B^{-1}$ is a bounded compact selfadjoint operator in $L^2(0,L)$. Thus 
$B^{-1}$ is diagonalizable in an orthonormal basis in $L^2(0,L)$, and the same is true for $B$. \\ 

\noindent{\sc Step 2 (Asymptotics of the eigenvalues of $B$)}\\
 
\noindent 
{\sc  Claim A.4}  Using a convenient relabeling,  the sequence of eigenvalues of $B$ can be written 
$(\lambda _n)_{n\in \Z}$, with $\lambda _n\le \lambda _{n+1}$ for all $n\in \Z$ and 
\ba
\lambda _n = \left( \frac{\frac{\pi}{6} + 2\pi (k_1+n) }{L} \right) ^3 + O(n)&\textrm{ as }& n\to +\infty ,  \label{GG41}\\
\lambda _n = -  \left( \frac{\frac{7 \pi}{6} + 2\pi (k_2-n) }{L} \right) ^3 + O(n) &\textrm{ as }& n\to -\infty   \label{GG42}
\ea
for some numbers $k_1,k_2\in\Z$. 

The eigenvalues of $B$ as given in Claim A.3 satisfy $|\lambda _n|\to +\infty$ as $n\to +\infty$. We shall show that 
they can be separated into two subsequences, one with the asymptotics \eqref{GG41} and another one
with the asymptotics \eqref{GG42}.     
 
Assume that $(v,\lambda )$ is a pair of eigenvector/eigenvalue for $B$; that is, $v\in D(B)$, $v\ne 0$,  and 
$v'''(x)+v'(x)=-\lambda v(L-x)$. This yields 
\begin{equation}
\label{GG51} 
v^{(6)} + 2v^{(4)} + v''= - \lambda ^2 v.
\end{equation}
 The roots of the equation 
 \be
 \label{GG52}
 (r^3+r)^2= - \lambda ^2
 \ee
  read $r_1, r_2,r_3, -r_1,-r_2,-r_3$, where $r_1,r_2,r_3$ denote the roots of 
\begin{equation}
\label{GG53}
r^3+r=i\lambda .
\end{equation}
Note that if the $r_j$ are not pairwise distinct, then any multiple root $r$ should also solve $3r^2+1=0$, so that 
$r=\pm i/\sqrt{3}$ and $\lambda =\pm 2/(3\sqrt{3})$. Thereafter, we assume that $|\lambda |> 2/(2\sqrt{3})$ so that 
the roots $r_j$ ($1\le j\le 3$) of \eqref{GG53} are simple. Note that the roots $\pm r_j$ ($1\le j\le 3$) of \eqref{GG52}
are also simple,  for $r_i=-r_j$ yields $i\lambda =r_i^3+r_i=-(r_j^3+r_j) =-i\lambda$ and $\lambda =0$.  
It follows that the exponential maps $e^{ \pm r_j x }$ ($1\le j\le 3$)  are not linked. From \eqref{GG51}, we can write 
\[
v(x) = \sum_{j=1}^3 [ a_j e^{r_j x} + b_j e^{-r_j x} ]
\]
for some constants $a_j, b_j\in \C$ ($1\le j\le 3$). The equation $Bv=\lambda v$ yields
\[
\sum_{j=1}^3 \left( [a_j (r_j^3 + r_j)  + b_j\lambda e^{-r_jL} ]e^{r_jx} + [ a_j \lambda  e^{r_jL} -b_j (r_j^3+r_j)]e^{-r_jx}\right)=0. 
\]
This holds if and only if $b_j=-ie^{r_jL} a_j$ for $1\le j\le 3$. Thus $v$ takes the form 
\be
\label{GG61}
v(x)=\sum_{j=1}^3 a_j [ e^{r_jx} - ie^{r_j(L-x)}]. 
\ee
Then the conditions $v(L)=0$, $v(0)=0$ and $v'(L)=0$ are equivalent to 
\ba
\sum_{j=1}^3 a_j (e^{r_jL} - i) &=& 0, \label{GG71} \\
\sum_{j=1}^3 a_j (1-i e^{r_jL})&=& 0, \label{GG72}\\
\sum_{j=1}^3 r_ja_j (e^{r_jL} + i) &=& 0. \label{GG73}  
\ea
The equations \eqref{GG71}-\eqref{GG72} yield $\sum_{j=1}^3 a_j = \sum_{j=1}^3 a_j e^{r_jL} =0$, i.e.
\begin{eqnarray}
a_3&=& - a_1- a_2, \nonumber \\
a_1( e^{ r_1L} -e^{ r_3L } ) +a_2 (e^{ r_2 L } -e^{ r_3L } ) &=&0. \label{PPPP2}
\end{eqnarray}
Substituting the values of $a_3$ and $a_2$ in \eqref{GG73} results in 
\[
a_1 
\big[ 
r_1(e^{r_1L} +i)(e^{r_2L} -e^{r_3L}) + r_2 (e^{r_2L} +i)(e^{r_3L} -e^{r_1L}) +r_3 (e^{r_3L} +i) (e^{r_1L} -e^{r_2L})
\big]  =0.   
\]
Thus if $e^{r_2L}\ne e^{r_3L}$, then $a_2$ can be expressed in terms of $a_1$, and 
the system has a solution $(a_1,a_2,a_3)\ne (0,0,0)$ if and only if 
\be
\label{GG80}
r_1(e^{r_1L} +i)(e^{r_2L} -e^{r_3L}) + r_2 (e^{r_2L} +i)(e^{r_3L} -e^{r_1L}) +r_3 (e^{r_3L} +i) (e^{r_1L} -e^{r_2L}) =0.   
\ee
The case $e^{r_2L}=e^{r_3L}$ is actually impossible for $ | \lambda |$ large enough, see below \eqref{PPPP3} and \eqref{PPPP4}.  
We shall use several times the following classical result.
\begin{theorem} (Rouch\'e's theorem, see e.g. \cite[3.42]{titchmarsh}) Let $f$ and $g$ be analytic inside and on a closed contour $\Gamma $, and such that $|f(z)-g(z)|<|f(z)|$ on $\Gamma$. Then 
$f$ and $g$ have the same number of zeros inside $\Gamma$. 
\end{theorem}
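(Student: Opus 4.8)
The plan is to derive Rouch\'e's theorem from the argument principle combined with a homotopy (continuity) argument. I first recall the argument principle: if $h$ is analytic on and inside $\Gamma$ and does not vanish on $\Gamma$, then the number of zeros of $h$ inside $\Gamma$, counted with multiplicity, equals
\[
N_h = \frac{1}{2\pi i} \oint_\Gamma \frac{h'(z)}{h(z)}\, dz,
\]
which is precisely the winding number of the image curve $h\circ\Gamma$ about the origin. The goal is therefore to prove that $N_f = N_g$ under the hypothesis $|f-g|<|f|$ on $\Gamma$.

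First I would introduce the one-parameter family
\[
h_t(z) := f(z) + t\big(g(z)-f(z)\big), \qquad t\in[0,1],
\]
which interpolates between $h_0=f$ and $h_1=g$, each $h_t$ being analytic on and inside $\Gamma$. The decisive observation is that $h_t$ never vanishes on $\Gamma$: for every $z\in\Gamma$ and every $t\in[0,1]$,
\[
|h_t(z)| \ge |f(z)| - t\,|g(z)-f(z)| \ge |f(z)| - |f(z)-g(z)| > 0,
\]
the last inequality being exactly the hypothesis. In particular $f$ and $g$ themselves do not vanish on $\Gamma$, so both $N_f$ and $N_g$ are well defined.

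Next I would set
\[
N(t) := \frac{1}{2\pi i} \oint_\Gamma \frac{h_t'(z)}{h_t(z)}\, dz, \qquad t\in[0,1].
\]
By the argument principle $N(t)$ is a nonnegative integer for each $t$, since it counts the zeros of $h_t$ inside $\Gamma$. On the other hand, because $\Gamma$ is compact and $|h_t|$ is bounded below by a positive constant uniformly in $(t,z)\in[0,1]\times\Gamma$, the integrand $h_t'/h_t$ depends continuously on $t$, uniformly in $z\in\Gamma$; integrating over the fixed finite-length contour $\Gamma$ then shows that $t\mapsto N(t)$ is continuous. A continuous integer-valued function on the connected interval $[0,1]$ must be constant, whence $N(0)=N(1)$, i.e. $N_f=N_g$, as asserted.

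The one point requiring genuine care is the continuity of $N(t)$, and underlying it the uniform lower bound $\inf_{[0,1]\times\Gamma}|h_t|>0$; this is immediate from the \emph{strict} inequality in the hypothesis together with the compactness of $\Gamma$ and the continuity of $f$ and $g$, so no serious obstacle arises. For completeness I note a slightly shorter alternative that bypasses the homotopy: the hypothesis gives $|g/f-1|<1$ on $\Gamma$, so the image curve $(g/f)\circ\Gamma$ lies in the disk $\{|w-1|<1\}$, which does not contain the origin; its winding number about $0$ is thus $0$, and the argument principle applied to the meromorphic function $g/f$ yields $N_g-N_f=0$.
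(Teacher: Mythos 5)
Your proof is correct. One remark on context first: the paper does not prove this statement at all --- it quotes Rouch\'e's theorem as a classical result with a citation to Titchmarsh \cite[3.42]{titchmarsh} --- so there is no in-paper argument to compare against, and any complete proof you supply is by construction a ``different route.'' Your main (homotopy) argument is sound: the family $h_t=f+t(g-f)$ is nonvanishing on $\Gamma$ by the strict hypothesis, the zero count $N(t)$ is integer-valued by the argument principle, and continuity of $t\mapsto N(t)$ follows from the uniform lower bound $\inf_{[0,1]\times\Gamma}\lvert h_t\rvert>0$, which you correctly trace to compactness of $[0,1]\times\Gamma$ and joint continuity; constancy on $[0,1]$ then gives $N_f=N_g$. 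Two small points worth making explicit: (i) ``analytic inside and on $\Gamma$'' should be read as analytic on a neighborhood of $\Gamma$ and its interior, so that $h_t'/h_t$ is continuous on $\Gamma$ and the argument principle applies with $\Gamma$ a simple closed contour; (ii) in your shorter alternative via the quotient $g/f$, the identity $N_g-N_f=$ winding number of $(g/f)\circ\Gamma$ about $0$ uses that at a common zero of $f$ and $g$ the order of $g/f$ is the difference of the multiplicities --- harmless, but it deserves a word. Interestingly, that alternative is essentially the proof in the reference the paper cites: Titchmarsh argues that $\lvert g/f-1\rvert<1$ on $\Gamma$ confines the image curve to a disk avoiding the origin, so the variation of $\arg(g/f)$ around $\Gamma$ vanishes. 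Thus your ``aside'' reproduces the cited classical proof, while your homotopy argument is the more robust variant (it adapts directly to the symmetric hypothesis $\lvert f-g\rvert<\lvert f\rvert+\lvert g\rvert$ and to degree-theoretic generalizations), at the cost of the extra continuity verification you rightly flagged as the only delicate step.
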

\noindent
$\bullet$ Assume that $\lambda \to +\infty$. 
Then $|r|\to +\infty$ and  from the equation $r^3 ( 1 + \frac{1}{r^2} )=i\lambda$, we infer that 
\[
r^3\sim i\lambda. 
\]
Then we can choose $r_1,r_2,r_3$ so that 
\[
r_1\sim -i\lambda ^\frac{1}{3}, \ r_2\sim -i j \lambda ^\frac{1}{3}, \ r_3\sim -i j^2 \lambda ^\frac{1}{3}
\]
where $j=e^{i\frac{2\pi}{3}}$. More precisely, for any $C>1/3$ and for $\lambda$ large enough, there is only one solution $r$ of  $r^3 ( 1 + \frac{1}{r^2} )=i\lambda$ 
in the disk $\{ r\in\C ; \ |r-(-i\lambda ^\ff)   |  <C\lambda ^{-\ff} \}$. Indeed, letting $f(r)=r^3-i\lambda$, $g(r) = r^3+r-i\lambda$, and 
considering $r=-i\lambda ^\ff + \rho$ with
$|\rho| =C\lambda ^{-\ff}$, we have that 
$| f(r)-g(r) | = |r|=\lambda ^\ff + O(\lambda ^{-\ff})$, while 
\[ 
|f(r)|
=\left\vert  (-i\lambda ^\ff +\rho)^3-i\lambda| = |3(-i\lambda ^\ff)^2\rho + 3(-i\lambda ^\ff)\rho^2 + \rho ^3 \right\vert
=3C\lambda ^\ff + O(\lambda ^{-\ff}). 
\]
The conclusion for $r_1\sim r_1^*:=-i\lambda ^\ff$ follows then from Rouch\'e's theorem. We can do exactly the same for the two other roots
$r_2\sim r_2^*:=jr_1^*$ and $r_3\sim r_3^* :=j^2r_1^*$. 

From $r^3=i\lambda (1+r^{-2})^{-1} = i\lambda (1-r^{-2} + r^{-4} +\cdots) $, we obtain that 
\begin{eqnarray*}
r_1 &=& -i\lambda ^\ff (1 + \ff  \lambda ^{-\frac{2}{3}} + \cdots ) = -i\lambda ^\ff + O(\lambda ^{-\ff}) \sim -i\lambda ^\ff, \\
r_2 &=& -ij\lambda ^\ff (1 + \ff j^{-2}  \lambda ^{-\frac{2}{3}} + \cdots ) = -i j \lambda ^\ff + O(\lambda ^{-\ff}) \sim (\frac{\sqrt{3} }{2} + \frac{i}{2}) \lambda ^\ff , \\
\textrm{ and } \quad r_3 &=& -i j^2 \lambda ^\ff (1 + \ff j^{-4} \lambda ^{-\frac{2}{3}} + \cdots ) = -i j^2\lambda ^\ff + O(\lambda ^{-\ff}) \sim (- \frac{\sqrt{3} }{2} + \frac{i}{2}) \lambda ^\ff .
\end{eqnarray*} 
Clearly, 
\be
|e^{r_1L}| \to 1, \ \ |e^{r_2L} | \to + \infty \  \textrm{ and }  \ e^{r_3L}\to 0.  
\label{PPPP3}
\ee

Plugging these expressions in \eqref{GG80}, we arrive to 
\be
\label{GGG1}
e^{r_1L} (r_1-r_2) + i(r_1-r_3) = O(\lambda ^\ff 
e^{-\frac{\sqrt{3}}{2}  \lambda   ^\ff }).
\ee
On the other, we have that 
\[
r_1 - r_2 = - i \lambda ^\ff (1-j) + O(\lambda ^{-\ff} ), \quad  r_1-r_3 = -i \lambda ^\ff (1-j^2)  + O(\lambda ^{-\ff}),
\]
and hence
\be
e^{r_1L} = e^{-i\lambda ^\ff L+ O(\lambda ^{- \ff }) }= ij^2   + O(\lambda ^{-\frac{2}{3}} ) = e^{-i\frac{\pi }{6}}  + O(\lambda ^{-\frac{2}{3}}). 
\label{GGHH1}
\ee
We infer that for $\lambda$ large enough, 
\[
\lambda  ^\ff  L= \frac{\pi}{6} + 2n\pi + O(\lambda ^{-\ff})
\]
for some $n\in \Z$, so that $\lambda _n\sim  L^{-3} \left( \frac{\pi}{6} + 2n\pi \right) ^3$.
On the other hand, we claim that there is a simple eigenvalue $\lambda _n\sim  L^{-3} \left( \frac{\pi}{6} + 2n\pi \right) ^3$
for all $n\in \N $ large enough. Indeed, we infer from $r^3+ r -i\lambda =0$ that $r_2+r_3= - r_1$ and $r_2r_3=i\lambda /r_1$, so that 
\[
r_{2,3}=\frac{-r_1 \pm (r_1^2-\frac{4i\lambda}{r_1})^\frac{1}{2}}{2}= \frac{-r_1 \pm ( -3r_1^2 - 4)^\frac{1}{2}}{2}.
\] 
Since 
\[
r_2\sim r_2^*=jr_1^*= \frac{- r_1^* +\sqrt{3} i r_1^* }{2} =
 \frac{-r_1^* +  ( -3(r_1^*)^2 )^\frac{1}{2}}{2},
\]
 we have that 
\be
r_2 =\frac{-r_1  +  ( -3r_1^2 - 4)^\frac{1}{2}}{2}, \quad r_3 =\frac{-r_1 - ( -3r_1^2 - 4)^\frac{1}{2}}{2} \cdot
\label{PPPP5}
\ee
Let 
\[
g(r_1) =  \big[ r_1(e^{r_1L} +i)(e^{r_2L} -e^{r_3L}) + r_2 (e^{r_2L} +i)(e^{r_3L} -e^{r_1L}) 
+r_3 (e^{r_3L} +i) (e^{r_1L} -e^{r_2L}) \big] e^{-r_2 L }
\]
where $r_2$ and $r_3$ are as in \eqref{PPPP5}, and let $f(r_1) = e^{r_1L} r_1(1-j) +r_1i(1-j^2) $. Then for $\lambda$ large enough, 
$f(r_1)=0$ has a solution $r_1^*=-i\lambda _n^\ff$ where 
$\lambda _n= [\frac{1}{L} (\frac{\pi}{6} + 2\pi n ) ]^3$, $n\in \Z$. On the other hand, 
$| f(r_1)-g(r_1)| =O(\lambda ^{-\ff} )  <1< | f(r_1) |  $ if $ |r_1 -r_1^*|=1/L$ and $\lambda$ is large enough. As $f(r_1^*)=0$, we infer
from Rouch\'e's theorem that $g(r_1)=0$ has only one root in the disk $\{ r_1\in\C; \  |r_1-r_1^*|<\frac{1}{L}\}$. 
Combined with \eqref{GGHH1}, this yields for all $n\in\N$ large enough an eigenvalue $\lambda _n$ such that 
\[
\lambda _n^\ff L= \frac{\pi}{6} + 2n\pi + O(n^{-1}),
\]
and hence 
\[
\lambda _n = L^{-3} \left( \frac{\pi}{6} + 2n\pi \right) ^3 + O(n),
\] 
and we see from \eqref{GG61}-\eqref{GG73} that the associated eigenspace is onedimensional.

\noindent
$\bullet$  Assume that $\lambda \to -\infty$. 
We still choose $r_1,r_2,r_3$ so that 
\[
r_1\sim -i\lambda ^\frac{1}{3}, \ r_2\sim -i j \lambda ^\frac{1}{3}, \ r_3\sim -i j^2 \lambda ^\frac{1}{3}. 
\]
Then we obtain that 
\begin{eqnarray*}
r_1 &=&   -i\lambda ^\ff + O(\lambda ^{-\ff}) \sim i |\lambda| ^\ff, \\
r_2 &=& -i j \lambda ^\ff + O(\lambda ^{-\ff}) \sim  - (\frac{\sqrt{3} }{2} + \frac{i}{2}) |\lambda | ^\ff , \\
\textrm{ and } \quad r_3 &=& -i j^2\lambda ^\ff + O(\lambda ^{-\ff}) \sim  ( \frac{\sqrt{3} }{2} - \frac{i}{2}) |\lambda |^\ff .
\end{eqnarray*} 
Clearly, 
\be
|e^{r_1L}| \to 1, \ \  |e^{r_2L} | \to 0\  \textrm{ and }  \ |e^{r_3L}| \to +\infty.
\label{PPPP4}
\ee  
Plugging these expressions in \eqref{GG80}, we arrive to 
\[
e^{r_1L} (r_3-r_1) + i(r_2-r_1) = O( |  \lambda | ^\ff 
e^{-\frac{\sqrt{3}}{2} |  \lambda | ^\ff }).
\]

On the other, we have that 
\[
r_3 - r_1 = - i \lambda ^\ff (j^2-1) + O(\lambda ^{-\ff} ), \quad  r_2-r_1 = -i \lambda ^\ff (j-1)  + O(\lambda ^{-\ff}),
\]
and hence
\[
e^{r_1L} = e^{-i\lambda ^\ff L+ O(\lambda ^{-\ff}) }= i j   + O(\lambda ^{-\ff}) = e^{i\frac{7\pi }{6}}  + O(\lambda ^{-\frac{2}{3}}). 
\]

We infer that 
\[
-  \lambda ^\ff  L= \frac{7\pi}{6} + 2n\pi + O(\lambda ^{-\ff}),
\]
for some $n\in \Z$, so that 
\be
\lambda = - L^{-3} \left( \frac{7\pi}{6} + 2n\pi \right) ^3 + O(n). 
\ee 
On the other hand, we can prove as above that for all $n\in \Z$ with $n<0$ and $ | n | $ large enough, there is indeed an eigenvalue 
\[
\lambda_n  = - L^{-3} \left( \frac{7\pi}{6} + 2 | n | \pi \right) ^3 + O(n), 
\]
and that it is simple. \\
$\bullet$ Finally, 
we can relabel the $\lambda_n$'s so that the eigenvalues of $B$ form a sequence 
$(\lambda_n)_{n\in \Z}$  as in Claim A.4. \\[3mm]

\noindent
{\sc Step 3. Diagonalization of the operator $A$}\\

We denote by $(v_n)_{n\in \Z}$ an orthonormal basis in $L^2(0,L)$ such that $Bv_n = \lambda _n v_n$ for all $n\in\Z$.  
Let for $n\in \Z$ 
\[ \theta _n^+ (x):=-\frac{i}{\sqrt{2}} v_n(L-x), \ \theta _n^- (x) := \frac{i}{\sqrt{2}}  v_n (L-x), \ \textrm{ and } u_n^+(x)= u_n^-(x) := \frac{1}{\sqrt{2}} v_n(x). \]
Then $(\theta_n ^+, u_n^+)_{n\in \Z} \cup (\theta _n ^- , u_n^-)_{n\in \Z}$ is an orthonormal basis in $L_\C^2(0,L)\times L_\C^2(0,L)$ (endowed with the natural
scalar product 
$( (\theta ,u), (\varphi , w)) = \int_0^L  [ \theta(x)\overline{\varphi (x)} + u(x)\overline{ w(x)}] dx$) 
composed of eigenvectors of $A$:
\[
A(\theta _n^+, u_n^+)=i\lambda _n (\theta _n^+, u_n^+), \quad A(\theta _n^-, u_n^-)=(-i\lambda _n) (\theta _n^-, u_n^-). 
\]  
Let $(\theta ^0, u^0)\in [L^2_\C (0,L)]^2$ be  decomposed as 
\[
\theta ^0(x)=\sum_{n\in \Z} a_n v_n (L-x), \quad u^0(x)= \sum_{n\in \Z} b_n v_n(x). 
\]
(Note that $(v_n(L- \cdot ))_{n\in \Z}$ is also an orthonormal basis in $L^2(0,L)$.)
Then we can write 
\[
(\theta ^0,u^0)= \sum_{n\in \Z} [c_n^+ (\theta_n^+ ,  u_n^+) + c_n^-  (\theta _n^-, u_n^-) ]
\]
where 
\[
c_n^+=\frac{1}{\sqrt{2} } (i a_n+b_n), \quad c_n^- = \frac{1}{\sqrt{2} } (b_n-ia_n). 
\]
It follows that the solution $(\theta, u)$ of \eqref{A51} can be written as
\be
\label{GGG11}
(\theta , u ) = \sum_{n\in \Z} [c_n^+ e^{ i \lambda _nt} (\theta _n^+, u_n^+) + c_n^-  e^{- i \lambda _nt} (\theta _n^-, u_n^-)]. 
\ee
Let us show that there is an asymptotic spectral gap for the spectrum.  \\
It follows from \eqref{GG41} and \eqref{GG42} that 
\begin{eqnarray}
\lambda _{n+1}-\lambda  _n &\sim& \frac{24\pi ^3}{L^3} n^2 \quad \textrm{ as } n \to\infty , \label{GH3} \\
\lambda _n-\lambda  _{n-1} &\sim& \frac{24\pi ^3}{L^3} n^2 \quad \textrm{ as } n \to - \infty . \label{GH4} 
\end{eqnarray}
On the other hand, for all $k\in\Z$ and $n\in \N$, we have that 
\[
\lambda _n -(-\lambda_{k-n}) =(a-b)(a^2+ab+b^2) + O(n)
\]
where $a=(\frac{\pi}{6} + 2\pi (k_1+n))/L$, $b=(\frac{7\pi}{6} + 2\pi (  k_2-k+n))/L$.
Then 
\[ |a-b| =\left\vert \frac{2\pi(k_1-k_2+k)-\pi}{L} \right\vert \ge \frac{\pi}{L}
\]
and 
\[
a^2+ab+b^2\ge \frac{1}{2}(a^2+b^2). 
\]
It follows that there is some $n_0\in \N^*$ such that for any $k\in \Z$ and some $C\in \R_+$, 
\[
\vert \lambda _n -(-\lambda_{k-n}) \vert = \frac{\pi}{2L^3}
\left\vert \frac{\pi}{6} + 2\pi (k_1+n) \right\vert^2  
 + O(n) \ge C n^2 \quad \textrm{ for } n \ge n_0. 
\]
Combined with  \eqref{GH3}-\eqref{GH4}, we infer that for all $A>0$ we can find some $n_1\ge n_0$ such that
we have the spectral gap relation
\begin{eqnarray*}
&&|(i\lambda_n)-(i\lambda _m)| \ge A,\quad \forall m\ne n\  \textrm{ with } \ |m|\ge n_1,\ |n|\ge n_1, \\
&&|(i\lambda _n)-(-i\lambda _m)| \ge A, \quad \forall m,n\ \textrm{ with }\  |m|\ge n_1,\ |n|\ge n_1. 
\end{eqnarray*}
Pick any $T>0$. It follows then from Ingham's lemma that there exist an integer $N\in \N$ and a constant
$K>0$ such that for all $(c_n^+)_{n \in \Z}, (c_n^-)_{n \in \Z}\in l^2(\Z)$, we have 
\[
K^{-1} \sum_{|n|\ge N}  \big( |c_n^+|^2 + |c_n^-|^2\big)  \le \int_0^T | \sum_{|n|\ge N}
\big(  c_n^+ e^{i\lambda _nt} + c_n^- e^{-i\lambda _nt} \big)  \vert ^2 dt \le   
K\sum_{|n|\ge N}  \big(  |c_n^+|^2 + |c_n^-|^2\big) . 
\]
Next, we compare $|v_n''(0)|$ to  $|v_n''(L)|$ as $ | n | \to +\infty$. \\

\noindent
{\sc Claim A.5} There are two numbers $K_+,K_-\in \C ^*$ such that 
$v_n''(0)  \sim K_+ v_n''(L) $ as $n\to +\infty$ and $v_n''(0) \sim  K_- v_n''(L)$ as $n\to - \infty$. \\
It follows from \eqref{GG61} that 
\[
v_n''(0)= \sum_{j=1}^3 a_{j}r_{j}^2(1-ie^{r_{j} L}), \quad v_n''(L)=\sum_{j=1}^3 a_{j}r_{j}^2(e^{r_{j}L}-i) 
\]
where $r_j=r_j(n)$ and $a_j=a_j(n)$. 
Assume first that $n\to +\infty$. Then 
\[ a_3=-a_1 -a_2,\quad a_1(e^{r_1L} -e^{r_3L}) + a_2 (e^{r_2L} -e^{r_3L})=0. \]
It follows that 
\begin{eqnarray*}
v_n''(0)&=& a_1r_1^2(1-ie^{r_1L})+a_2r_2^2 (1-ie^{r_2L}) + a_3r_3^2 (1-ie^{r_3L}) \\
&=& a_1\bigg(  r_1^2 (1-ie^{r_1L}) +r_2^2 ie^{r_1L} -r_3^2  +
 O(\lambda _n ^\frac{2}{3} e^{-\frac{\sqrt{3}}{2} \lambda _n ^\frac{1}{3} } ) \bigg) \\
 &=& a_1 \big( (1-j^4) (-\lambda _n^\frac{2}{3} ) +ie^{r_1L} (j^2-1) (-\lambda _n ^\frac{2}{3})  + O(1)\big)\\
 &=& a_1(1-j)(-\lambda _n^\frac{2}{3})(1+j^2ie^{r_1L} + O(\lambda _n ^{-\frac{2}{3}})).
\end{eqnarray*}
But $e^{r_1L}\sim e^{-i\lambda _n ^\ff L} \sim e^{ -i\frac{\pi}{6}}=  i j^2$, and hence
\[
v_n''(0)\sim a_1 (1-j)^2 (-\lambda _n^\frac{2}{3}).
\]
Similarly, we obtain 
\begin{eqnarray*}
v_n''(L)&=& a_1r_1^2( e^{r_1L} -i )+a_2r_2^2 (e^{r_2L} -i ) + a_3r_3^2 (e^{r_3L} -i ) \\
&=& a_1\bigg(  r_1^2 (e^{r_1L} -i ) -r_2^2 e^{r_1L}  + i r_3^2  +
 O(\lambda  _n^\frac{2}{3} e^{-\frac{\sqrt{3}}{2} \lambda _n ^\frac{1}{3} } ) \bigg) \\
 &=& a_1 \big( e^{r_1L}  (1-j^2 ) (-\lambda _n^\frac{2}{3} ) +i (j-1) (-\lambda _n^\frac{2}{3})  + O(1)\big)\\
 &=& a_1(1-j)(-\lambda _n^\frac{2}{3})(-j^2 e^{r_1L} -i + O(\lambda _n^{-\frac{2}{3}} )),
\end{eqnarray*}
and hence
\[
v_n''(L)\sim a_1 (1-j) \, i j^2 (-\lambda _n^\frac{2}{3}).
\]
We conclude that $v_n''(0)\sim \frac{1-j}{ij^2} v_n''(L)$ as $n\to +\infty$. We can prove in the same way 
that $v_n''(0)\sim K_- v_n''(L)$ as $n\to -\infty$ for some constant $K_-\ne 0$. \\[3mm]

We are in a position to complete the proof of Theorem \ref{thm100}. Pick any $(\theta ^0, u^0)\in X_2$. From \eqref{GGG11}, we have that 
\begin{eqnarray*}
\theta _{xx}(t,L) &=& \sum _{n\in \Z} [c_n^+ e^{ i \lambda _nt} \theta _{n,xx}^+(L)
+ c_n^-  e^{- i \lambda _nt} \theta _{n,xx}^-(L) ] \\
&=&\frac{i}{\sqrt{2}} \sum_{n\in \Z} [-c_n^+ e^{ i \lambda _nt} v_n''(0)
+ c_n^-  e^{- i \lambda _nt} v_n''(0) ], \\
\theta _{xx}(t,0) &=& \frac{i}{\sqrt{2}} \sum_{b\in \Z} [-c_n^+ e^{ i \lambda _nt} v_n''(L)
+ c_n^-  e^{- i \lambda _nt} v_n''(L) ], \\
u_{xx}(t,L) &=& \frac{1}{\sqrt{2}} \sum_{n\in \Z} [c_n^+ e^{ i \lambda _nt} v_n''(L)
+ c_n^-  e^{- i \lambda _nt} v_n''(L) ], \\
\textrm{ and } \quad u_{xx}(t,0) &=& \frac{1}{\sqrt{2}}  \sum_{n\in \Z}  [c_n^+ e^{ i \lambda _nt} v_n''(0)
+ c_n^-  e^{- i \lambda _nt} v_n'' (0) ].
\end{eqnarray*}
Then by \eqref{C51}, Claim A.4 and Claim A.5, we have that  (with a constant $C$ that may change
from line to line)
\begin{eqnarray*}
\Vert  (\theta ^0, u^0)\Vert ^2_{X_2} 
&\le&  C \int_0^T [ | \theta _{xx} (t,L) |^2 + |u_{xx} (t,L )|^2 ] \, dt \\
&\le& C \left( \int_0^T \left\vert \sum_{n\in \Z} [-c_n^+ e^{ i \lambda _nt} v_n''(0)
+ c_n^-  e^{- i \lambda _nt} v_n''(0) ] \right\vert ^2 dt \right. \\
&& \left. \quad + \int_0^T \left\vert  \sum_{n \in \Z} [c_n^+ e^{ i \lambda _nt} v_n'' (L)
+ c_n^-  e^{- i \lambda _nt} v_n'' (L) ] \right\vert ^2 dt \right)\\ 
&\le& C\left( \Vert (\theta ^0, u^0)\Vert ^2_{X_0} + 
 \sum_{ |n| \ge N}(|c_n^+|^2 + |c_n^-|^2) ( |v_n''(0)|^2 + |v_n''(L)|^2)   \right) \\
 &\le& C\left( \Vert (\theta ^0, u^0)\Vert ^2_{X_0} + 
 \sum_{ |n| \ge N}(|c_n^+|^2 + |c_n^-|^2)  |v_n''(0)|^2  \right) \\
&\le& C\left( \Vert (\theta ^0, u^0)\Vert ^2_{X_0} + 
\int_0^T | \theta _{xx} (t,L) |^2  \, dt \right) . 
\end{eqnarray*}
Thus \eqref{GG1} is proved. The proof of \eqref{GG2} is similar, and therefore it is omitted.
The proof of Theorem \ref{thm100} is complete. \qed  
\section*{Acknowledgments}
RC was supported by CNPq and Capes (Brazil) via a Fellowship: ``Ci\^ encia
sem Fronteiras'' and Agence Nationale de la Recherche (ANR), Project CISIFS,
grant ANR-09-BLAN-0213-02. Part of this work was done during the preparation of the PhD
thesis of RC at Universit\'e de Lorraine and UFRJ. RC thanks both host
institutions for their warm hospitality.
AFP was partially supported by CNPq (Brazil) and the International Cooperation 
Agreement Brazil-France.
LR was partially supported by the ANR project Finite4SoS (ANR-15-CE23-0007) and the project 
ICoPS Math-AmSud.

\end{document}